\tikzset{black/.append style={circle,draw,fill=black,inner sep=1.5pt}, 
    white/.append style={circle,draw,fill=white,inner sep=1.5pt}}
\newtheorem{theorem}{Theorem}[section]
\newtheorem{lemma}[theorem]{Lemma}
\newtheorem{proposition}[theorem]{Proposition}
\newtheorem{corollary}[theorem]{Corollary}
\theoremstyle{definition}
\newtheorem{example}[theorem]{Example}
\newtheorem{remark}[theorem]{Remark}
\newtheorem{definition}[theorem]{Definition}
\newtheorem*{definition*}{Definition}
\newtheorem*{theorem*}{Theorem}
\newtheorem*{proposition*}{Proposition}
\newtheorem*{corollary*}{Corollary}
\newtheorem{conjecture}[theorem]{Conjecture}
\newcommand{\Z}{\mathbb{Z}}
\newcommand{\wt}{\mathrm{wt}}
\renewcommand{\O}{\mathcal{O}}
\author{Yibo Gao}
\address{Department of Mathematics, Massachusetts Institute of Technology, \mbox{Cambridge, MA 02139}}
\email{\href{mailto:gaoyibo@mit.edu}{{\tt gaoyibo@mit.edu}}}
\author{Andrew Gu}
\address{Department of Mathematics, Massachusetts Institute of Technology, \mbox{Cambridge, MA 02139}}
\email{\href{mailto:agu1@mit.edu}{{\tt agu1@mit.edu}}}
\begin{document}
\title{Arithmetic of weighted Catalan numbers}
\date{\today}

\begin{abstract}
In this paper, we study arithmetic properties of weighted Catalan numbers. Previously, Postnikov and Sagan found conditions under which the $2$-adic valuations of the weighted Catalan numbers are equal to the $2$-adic valutations of the Catalan numbers. We obtain the same result under weaker conditions by considering a map from a class of functions to $2$-adic integers. These methods are also extended to $q$-weighted Catalan numbers, strengthening a previous result by Konvalinka. Finally, we prove some results on the periodicity of weighted Catalan numbers modulo an integer and apply them to the specific case of the number of combinatorial types of Morse links. Many open questions are mentioned.
\end{abstract}
\maketitle

\section{Introduction}\label{sec:intro}
The sequence of Catalan numbers $\{C_n\}_{n\geq0}$ is one of the most well-studied sequences in combinatorics. They have a product formula $C_n=\binom{2n}{n}/(n+1)$ and count Dyck paths, binary trees, triangulations and many more classical combinatorial objects \cite{stanley2015catalan}. In this paper, we focus on weighted Catalan numbers, one of the many generalizations of Catalan numbers, and their arithmetic.

Recall that a \textit{Dyck path} of semilength $n$ is a sequence of points $\{(x_k,y_k)\}_{k=0}^{2n}$ that starts at $(x_0,y_0)=(0,0)$ and ends at $(x_{2n},y_{2n})=(2n,0)$  in the upper half-plane of the integer lattice $\Z^2$ such that each step $s_k=(x_{k}-x_{k-1},y_{k}-y_{k-1})$ is either $(1,-1)$ or $(1,1)$. For a fixed sequence of integers $b=(b(0),b(1),b(2),\ldots)$, define the \textit{weight} of a step $s_k$ to be $b(y_{k-1})$ if $s_k$ has the form $(1,1)$ and 1 if $s_k$ has the form $(1,-1)$. For a Dyck path $P$, define its \textit{weight} $\wt_b(P)$ to be the product of the weights of its steps. See Figure~\ref{fig:dyckpath} for an example.
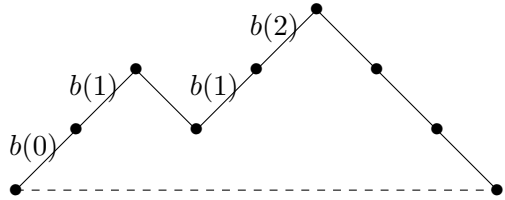
\begin{figure}[h!]
\centering
\begin{tikzpicture}[scale=0.8]
\node at (0,0) {$\bullet$};
\node at (1,1) {$\bullet$};
\node at (2,2) {$\bullet$};
\node at (3,1) {$\bullet$};
\node at (4,2) {$\bullet$};
\node at (5,3) {$\bullet$};
\node at (6,2) {$\bullet$};
\node at (7,1) {$\bullet$};
\node at (8,0) {$\bullet$};
\node at (0.3,0.7) {$b(0)$};
\node at (1.3,1.7) {$b(1)$};
\node at (3.3,1.7) {$b(1)$};
\node at (4.3,2.7) {$b(2)$};
\draw(0,0)--(2,2)--(3,1)--(5,3)--(8,0);
\draw[dashed](0,0)--(8,0);
\end{tikzpicture}
\caption{A Dyck path with weight $b(0)b(1)^2b(2)$.}
\label{fig:dyckpath}
\end{figure}
\begin{definition}
For $b:\Z_{\geq0}\rightarrow\Z$, the \textit{weighted Catalan numbers} $C_n^b$ are defined as
\[C_n^b=\sum_{P}\wt_b(P)\]
where the sum is over all Dyck paths $P$ of semilength $n$ and the weight of a Dyck path, $\wt_b(P)$, is explained as above.
\end{definition}
The weighted Catalan numbers have a beautiful generating function, illustrated in the following proposition, which is left as a simple exercise for the reader, whose proof can be found in the book of Goulden and Jackson \cite{goulden2004combinatorial}.
\begin{proposition}
\label{prop:genfunc}
The weighted Catalan numbers have the following generating function:
\[\sum_{n\geq0}C_n^bx^n=\cfrac{1}{1-\cfrac{b(0)x}{1-\cfrac{b(1)x}{1-\cfrac{b(2)x}{1-\cdots}}}}.\]
\end{proposition}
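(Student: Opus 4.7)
The plan is to prove this via the classical first-return decomposition of Dyck paths, adapted to track the weights. For each $k \geq 0$, let $b_k$ denote the shifted weight sequence $b_k(i) = b(k+i)$, and set
\[F_k(x) = \sum_{n\geq 0} C_n^{b_k} x^n.\]
We want to show $F_0(x)$ equals the displayed continued fraction, which will follow from a functional equation relating $F_k$ to $F_{k+1}$.

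First I would establish the recurrence
\[F_k(x) = 1 + b(k)\, x\, F_{k+1}(x)\, F_k(x).\]
The idea is that a Dyck path $P$ counted by $F_k$ is either empty (contributing $1$) or admits a unique decomposition at its first return to the $x$-axis: $P = U \cdot Q \cdot D \cdot R$, where $U$ is the initial up-step, $D$ is the down-step at the first return, $Q$ is the (possibly empty) sub-path strictly between them, and $R$ is the remaining Dyck path. Under the weights $b_k$, the initial step $U$ contributes $b_k(0) = b(k)$, the step $D$ contributes $1$, the segment $Q$ (once shifted down by $1$) is an arbitrary Dyck path whose $y$-coordinates are all shifted by $1$, so it is weighted by $b_{k+1}$ and contributes $F_{k+1}(x)$, and the tail $R$ is weighted by $b_k$ and contributes $F_k(x)$. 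Multiplying semilengths gives the factor of $x$, establishing the recurrence.

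Solving for $F_k(x)$ yields
\[F_k(x) = \frac{1}{1 - b(k)\, x\, F_{k+1}(x)},\]
an identity of formal power series (the constant term of the denominator is $1$, so inversion is legitimate). Iteratively substituting $F_1, F_2, \ldots$ into $F_0$ produces the finite truncations of the right-hand side continued fraction, and since each substitution only changes coefficients of $x^n$ for $n$ strictly larger than the current depth, the truncations stabilize in any fixed degree, so $F_0(x)$ equals the full continued fraction as a formal power series.

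The only mild subtlety is making precise the sense in which the infinite continued fraction converges in $\Z[[x]]$; this is handled by the observation in the previous paragraph that the depth-$N$ truncation of the continued fraction agrees with $F_0(x)$ modulo $x^{N+1}$, which one checks by induction on $N$ using the recurrence for $F_k$. No other step is delicate, which is why the proposition can reasonably be stated as an exercise.
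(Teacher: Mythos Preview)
Your argument is correct and is exactly the standard first-return decomposition that one finds in Goulden--Jackson; the paper itself does not supply a proof but leaves the proposition as an exercise with that reference. There is nothing to compare here beyond noting that your write-up matches the intended classical approach.
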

Specializations of weighted Catalan numbers count many interesting combinatorial objects. We provide some examples here. When $b\equiv1$, we recover the usual Catalan numbers. When $b(k)=k+1$, $C_n^b=(2n-1)!!$ counts the number of matchings of $2n$ objects. When $b(k)=(k+1)^2$, $C_n^b$ counts the number of alternating permutations of size $2n$ and when $b(k)=(k+1)(k+2)$, $C_n^b$ counts the number of alternating permutations of size $2n+1$ \cite{goulden2004combinatorial}. Finally, when $b(k)=(2k+1)^2$, $C_n^b$ counts the number of combinatorial types Morse links of order $n$ \cite{postnikov2000morse}.

The divisibility of Catalan numbers by powers of 2 has been determined both arithmetically and combinatorially (see for example \cite{dickson1966history} and \cite{deutsch2006congruences}). For a positive integer $q\geq2$ and for $n\in\Z_{>0}$, let $\xi_q(n)$ be the unique $m\in\Z_{\geq0}$ such that $q^m\mid n$ and $q^{m+1}\nmid n$ and let $s_q(n)$ be the sum of digits in the $q$-ary expansion of $n$. It is then well-known that $\xi_2(C_n)=s_2(n+1)-1$.

Postnikov and Sagan \cite{postnikov2007power} provided a sufficient condition on $b:\Z_{\geq0}\rightarrow\Z$ for $\xi_2(C_n^b)=\xi_2(C_n)$. For any function $f:\Z_{\geq0}\rightarrow\Z$, let $\Delta f:\Z_{\geq0}\rightarrow\Z$ be defined as $(\Delta f)(x)=f(x+1)-f(x)$.
\begin{theorem}[\cite{postnikov2007power}]\label{thm:postsagan}
If $b:\Z_{\geq0}\rightarrow\Z$ satisfies
\begin{enumerate}
    \item $b(0)$ is odd,
    \item $2^{n+1}\mid(\Delta^nb)(x)$ for all $x\in\Z_{\geq0}$,
\end{enumerate}
then $\xi_2(C_n^b)=\xi_2(C_n)=s_2(n+1)-1$ for all $n$.
\end{theorem}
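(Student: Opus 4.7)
The plan is to expand $b$ in Newton's finite-difference basis and reduce the problem to the constant-function case. Set $a_k := (\Delta^k b)(0)$, so Newton's forward-difference formula gives
\[ b(x) = \sum_{k=0}^{x} \binom{x}{k} a_k \quad \text{for } x\in\Z_{\geq 0}. \]
Specializing condition (2) to $x=0$ yields $2^{k+1}\mid a_k$ for all $k\geq 1$, while condition (1) says $a_0=b(0)$ is odd. Since $\xi_2(a_0^n C_n)=\xi_2(C_n)=s_2(n+1)-1$, it would suffice to establish the sharper congruence
\[ C_n^b \equiv a_0^n C_n \pmod{2^{s_2(n+1)}}, \]
from which the theorem follows immediately.

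The next step is to substitute the Newton expansion into the definition $C_n^b = \sum_P \prod_{\text{up-steps}} b(y_{i-1})$ and view $C_n^b$ as a polynomial in $a_0,a_1,\ldots,a_{n-1}$. The ``pure-$a_0$'' monomial, obtained by selecting the $k=0$ summand at every up-step, is exactly $a_0^n C_n$, since $n$ up-steps each contribute $a_0$ and there are $C_n$ Dyck paths. Every other monomial $a_0^{n-m}a_{j_1}\cdots a_{j_m}$ has some $j_i\geq 1$ and therefore carries $2$-adic valuation at least $\sum_i(j_i+1)\geq 2m$ from the $a_{j_i}$ factors alone.

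The main obstacle is that the lower bound $2m$ does not exceed $s_2(n+1)-1$ when $m$ is small (e.g.\ $m=1$ with a single $a_1$), so the $2$-adic content of the $a_{j_i}$'s must be supplemented by divisibility coming from the combinatorial coefficients. To close the argument, one would have to show that the coefficient of each correction monomial---a signed count of pairs consisting of a Dyck path and an assignment of the factors $a_{j_i}$ to distinct up-steps---is itself divisible by a suitable power of $2$. The natural mechanism is a sign-reversing involution on such labelled pairs that lifts the classical involution proof of $\xi_2(C_n)=s_2(n+1)-1$ to account for Newton-basis labels. A parallel route would be induction on $n$ via the first-return recursion
\[ C_n^b = b(0)\sum_{k=0}^{n-1} C_k^{b^+} C_{n-1-k}^b, \]
where $b^+(x):=b(x+1)$ inherits both hypotheses (this is exactly where we need condition (2) at arbitrary $x$, not only $x=0$), combined with the no-carry inequality $s_2(k+1)+s_2(n-k)\geq s_2(n+1)$; but this reroutes rather than bypasses the cancellation among terms of minimal valuation, which is the true heart of the problem.
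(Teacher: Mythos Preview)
Your proposal is not a complete proof, and you yourself identify the gap: after expanding in the Newton basis, a monomial with a single factor $a_1$ contributes $2$-adic valuation only $2$, which is generally much smaller than $s_2(n+1)$. You correctly note that the missing divisibility must come from the combinatorial coefficient multiplying each such monomial, but you do not supply the mechanism---neither the sign-reversing involution nor the first-return induction is carried out, and the latter, as you observe, merely reroutes the same cancellation problem. This cancellation is the entire content of the theorem, so what you have written is a plan rather than a proof.

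The paper does not prove Theorem~\ref{thm:postsagan} directly (it is cited from Postnikov--Sagan), but it recovers it as a special case of the stronger Theorem~\ref{thm:main}, whose proof takes a quite different route. Rather than expanding $b$ and bounding monomial coefficients, the argument passes to binary trees and groups them into orbits under the symmetry group that swaps left and right subtrees. One shows that the \emph{average} weight $r_b(\mathcal{O};x)$ over each orbit lies in the class $\mathcal{F}$ of functions with $2^n\mid \Delta^n f$; under the Postnikov--Sagan hypotheses this average is in fact always odd. Since orbit sizes are powers of $2$ with minimum $2^s$ where $s=s_2(n+1)-1$, and there are $(2s-1)!!$ minimal orbits (an odd number), the conclusion follows immediately from $C_n^b=\sum_{\mathcal{O}}\lvert\mathcal{O}\rvert\, r_b(\mathcal{O};0)$. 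The point is that orbit-averaging \emph{is} the cancellation mechanism: it packages exactly the left/right symmetry that your hoped-for involution would have to exploit, and delivers the required divisibility structurally via the recursion $r_b(\mathcal{O};x)=b(x)\cdot\langle r_b(\mathcal{O}_L;x),r_b(\mathcal{O}_R;x)\rangle$ together with the closure of $\mathcal{F}$ under $\langle\,\cdot\,,\,\cdot\,\rangle$ (Lemma~\ref{lem:F closure}).
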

In the case where $b$ is a polynomial function, a necessary and sufficient condition for $\xi_2(C_n^b)=\xi_2(C_n)$ for all $n$ was conjectured by Konvalinka \cite{konvalinka2007divisibility} and resolved by An \cite{an2010combinatorial} arithmetically. A generalization of Theorem~\ref{thm:postsagan} to weighted $q$-Catalan numbers, for a prime power $q$, is given by Konvalinka \cite{konvalinka2007divisibility}.

In this paper, we relax conditions of Theorem~\ref{thm:postsagan} and also arrive at the conclusion that $\xi_2(C_n^b)=\xi_2(C_n)$. Our main theorem is stated and proved in Section~\ref{sec:thm} using combinatorial arguments on binary trees. In Section~\ref{sec:generalization}, we show that a similar approach can be used to generalize our results to weighted $q$-Catalan numbers. In addition, in a different setting, we discuss modulo arithmetic of weighted Catalan numbers in Section~\ref{sec:period}, resolving some periodicity conjectures by Postnikov \cite{postnikov2000morse}. Many intriguing conjectures will be mentioned along the way.

\section{The main theorem}\label{sec:thm}

\begin{theorem}\label{thm:main}
If $b: \Z_{\geq 0}\to\Z$ satisfies 
\begin{enumerate}
    \item $b(0)$ is odd,
    \item $4\mid (\Delta b)(x)$ for all $x\in\mathbb{Z}_{\geq 0}$,
    \item $2^n\mid (\Delta^nb)(x)$ for all $n\geq 2$ and $x\in \mathbb{Z}_{\geq 0}$,
\end{enumerate}
then
$\xi_2(C_n^b)=\xi_2(C_n)=s_2(n+1)-1.$
\end{theorem}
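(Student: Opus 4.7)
The plan is to extend the combinatorial framework of Postnikov--Sagan by combining a $2$-adic expansion of $b$ with a cancellation argument on plane binary trees. Since $b(0)$ is odd, I may multiply $b$ by the odd $2$-adic unit $b(0)^{-1}$ to reduce to the case $b(0) = 1$; this changes $C_n^b$ by the odd unit $b(0)^{-n}$ and preserves the differencing hypotheses, so the sought statement $\xi_2(C_n^b) = s_2(n+1)-1$ is unaffected. Via the standard bijection between Dyck paths of semilength $n$ and plane binary trees with $n$ internal nodes, in which an up-step at height $d$ corresponds to an internal node at depth $d$, I rewrite
\[ C_n^b = \sum_T \prod_{v \in V(T)} b(d(v)). \]

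The core step is to expand $b$ in Newton forward differences, $b(d) = \sum_{j \ge 0} \binom{d}{j}\alpha_j$, where $\alpha_j := (\Delta^j b)(0)$, and substitute into the product over $v$. Interchanging summations gives
\[ C_n^b = \sum_{(k_j)} \Bigl(\prod_{j \ge 0}\alpha_j^{k_j}\Bigr) \, N(k_0,k_1,\ldots), \]
where the outer sum is over compositions $\sum_j k_j = n$ and
\[ N(k_0,k_1,\ldots) = \sum_{(T,\vec j)} \prod_{v} \binom{d(v)}{j_v} \]
ranges over pairs of a binary tree $T$ and a coloring $\vec j : V(T) \to \Z_{\ge 0}$ with the prescribed color frequencies. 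Under the hypotheses one has $\alpha_0 = 1$, $\xi_2(\alpha_1) \ge 2$, and $\xi_2(\alpha_j) \ge j$ for $j \ge 2$, so the trivial composition $(n,0,0,\ldots)$ contributes exactly $\sum_T 1 = C_n$. The theorem then reduces to showing that every other composition contributes a multiple of $2^{s_2(n+1)}$ to $C_n^b$.

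For each nontrivial composition $(k_j)$, the $\alpha$-product already supplies $2$-adic valuation at least $2k_1 + \sum_{j \ge 2} j k_j$, and I would supply the missing valuation by constructing a sign-reversing involution on the relevant pairs $(T,\vec j)$. Natural candidates are local tree modifications --- swapping children at a chosen vertex, promoting or demoting a colored node, or trading a factor $\binom{d}{j}$ for $\binom{d\pm 1}{j}$ or $\binom{d}{j\pm 1}$ --- whose parity changes can be controlled through Kummer's theorem for $\xi_2\binom{a}{b}$. The fixed points of the involution should then account for the entire contribution of $(k_j)$ to $C_n^b$ modulo $2^{s_2(n+1)}$.

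The main obstacle I anticipate is calibrating this involution so that (i) it is globally well-defined across the full support of $N(k_0,k_1,\ldots)$, independent of arbitrary choices, and (ii) its fixed points can be enumerated precisely modulo the required power of $2$. The hypothesis here supplies one fewer factor of $2$ in each $\alpha_j$ with $j \ge 2$ than Postnikov--Sagan assume, so this missing factor must come entirely from the binomial coefficients $\binom{d(v)}{j_v}$ and the tree structure itself. I expect the argument ultimately to interact directly with the binary expansion of $n+1$ underlying $s_2(n+1)$, likely through a delicate induction on $n$ or on a natural tree statistic such as the leftmost-path length.
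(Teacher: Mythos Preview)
Your proposal is a plan, not a proof: the decisive step---constructing a sign-reversing involution on colored trees that forces every nontrivial composition $(k_j)$ to contribute a multiple of $2^{s_2(n+1)}$---is never carried out, and you explicitly flag it as ``the main obstacle.'' Everything before that point (the reduction to $b(0)=1$, the Newton expansion, the identification of the trivial term as $C_n$) is routine bookkeeping; the entire content of the theorem lives in the involution you have not built. You also implicitly assume that the needed divisibility holds composition-by-composition, but this is not obvious and may itself be false without further cancellation across compositions. Note too that in the tree model used here the relevant statistic is $l_v$, the number of \emph{left} edges on the path to the root, not the total depth; this affects how any ``swap children'' involution interacts with the weights $\binom{l_v}{j_v}$.

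For comparison, the paper does something structurally different. Rather than expanding $b$ and hunting for an involution, it groups binary trees into orbits under the subtree-swapping group $G_n$ and studies the \emph{averaged} weight $r_b(\mathcal{O};x)$ over each orbit. These averages lie in the class $\mathcal{F}=\{f:2^m\mid\Delta^m f\}$, and one records the single bit $\varepsilon_m^{\mathcal{O}}$ telling whether $\Delta^m r_b(\mathcal{O})\equiv 2^m$ or $0\pmod{2^{m+1}}$. A recursion expresses $\varepsilon_m^{\mathcal{O}}$ in terms of the $\varepsilon$'s of the left and right sub-orbits, and a reduction lemma shows that collapsing any complete binary subtree to a single vertex multiplies $\varepsilon_m^{\mathcal{O}}$ only by a power of $\varepsilon_0$. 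Using the explicit structure of minimal orbits, the paper then reduces $\sum_{\mathcal{O}\in\mathcal{U}_n^{\min}}\varepsilon_0^{\mathcal{O}}\pmod 2$ for $n$ vertices to the same sum for strictly fewer vertices, bottoming out at $n\in\{1,2\}$ where the hypotheses $\varepsilon_0=1,\ \varepsilon_1=0$ (i.e.\ $b(0)$ odd and $4\mid\Delta b$) finish the job.

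The point is that the paper's extra factor of $2$ (relative to Postnikov--Sagan) comes from averaging over orbits of size $2^t$ with $t\ge s_2(n+1)-1$, not from a single pairwise swap. If you pursue your involution idea, you will almost certainly need to iterate swaps at many vertices simultaneously---in effect reconstructing the orbit decomposition---and then analyze the fixed locus, which is precisely what the paper's reduction-to-smaller-$n$ machinery accomplishes. As written, your proposal does not get past the setup.
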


To prove the main theorem, we modify the approach of Postnikov and Sagan with average weight functions $r_b(\mathcal{O}; x)$ attached to orbits of binary trees. Under their conditions, these functions were always odd-valued, whereas they are only integer-valued under the weaker conditions of Theorem \ref{thm:main}. We develop a method of analyzing their parity and show via induction that we have an odd number of minimal orbits with odd-valued average weight functions, which will prove the theorem.

\subsection{Binary trees and minimal orbits} \label{subsec:binarytrees}

In this section we reprove some important facts about binary trees.

\begin{definition}
A \emph{binary tree} is a rooted tree where each vertex has a left child, a right child, both children, or no children.
\end{definition}

Let $\mathcal{T}_n$ denote the set of binary trees on $n$ vertices. One of the interpretations of Catalan numbers is that $C_n=\lvert \mathcal{T}_n\rvert$. There is a symmetry group $G_n$ acting on $\mathcal{T}_n$, generated by reflections about a vertex which switch the left and right subtrees. An orbit of trees refers to an orbit under the action of $G_n$, which groups together binary trees which are isomorphic when the distinction between left and right is disregarded.

The following lemma was shown by Deutsch and Sagan \cite{deutsch2006congruences}.
\begin{lemma}
\label{lem:orbit sizes}
Let $\mathcal{O}$ be an orbit of the symmetry group $G_n$ on $\mathcal{T}_n$. Then $\lvert \mathcal{O}\rvert$ is of the form $2^t$ for a nonnegative integer $t$. Furthermore, $t\geq s=s_2(n+1)-1$, with equality for $(2s-1)!!$ orbits.
\end{lemma}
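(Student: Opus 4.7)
The plan is to pass from ordered to unordered binary trees, and to read off the orbit structure from the latter. Forgetting the left/right distinction at every vertex defines a surjection from $\mathcal{T}_n$ onto the set $\mathcal{U}_n$ of unordered binary trees on $n$ vertices, whose fibers are exactly the $G_n$-orbits. The fiber over $U \in \mathcal{U}_n$ has cardinality $2^{t(U)}$, where $t(U)$ counts the vertices of $U$ with either exactly one child or two non-isomorphic children-subtrees: these are precisely the vertices at which one makes an independent binary choice when promoting $U$ to an ordered tree. This establishes the first assertion $|\mathcal{O}| = 2^{t}$.

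For the bound $t(U) \geq s := s_2(n+1) - 1$, I would induct on $n$. Write the two subtrees of the root of $U$ as $L, R$ with sizes $a$ and $b$ (allowing $a = 0$ to encode a root with only one child), so that $a + b = n - 1$. The recursion
\[t(U) = t(L) + t(R) + \mathbb{1}[L \not\cong R]\]
together with the inductive bounds reduces the claim to a numerical comparison between $s_2(a+1) + s_2(b+1)$ and $s_2(n+1)$. A case split on whether $a = b$ and on whether $L \cong R$ handles each subcase directly; the only delicate situation is $a = b$ with $L \not\cong R$, where the purely numerical bound can fail but is recovered by noting that $L$ and $R$ cannot both be minimal trees on $a$ vertices, so the inductive bound on $t(L) + t(R)$ strengthens by the amount needed.

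To count minimal orbits, I would construct a bijection between minimal unordered trees on $n$ vertices and rooted binary trees $S$ with $k := s_2(n+1)$ leaves labeled by the bit positions $a_1 > \cdots > a_k$ of $n+1$. Given $S$, obtain a minimal $U$ by replacing each leaf labeled $a_i$ with the complete binary tree on $2^{a_i} - 1$ vertices, interpreting a leaf labeled $0$ as an empty subtree (so its parent has only one child in $U$). Conversely, from a minimal $U$, the $k-1$ contributor vertices form the internal skeleton of $S$, and each maximal complete binary subtree of $U$ collapses back to a labeled leaf. The classical fact that the number of rooted binary trees with $k$ labeled leaves equals $(2k-3)!! = (2s-1)!!$ then yields the count.

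The main obstacle is establishing the bijection in the third step --- specifically, that every minimal $U$ arises from this construction. This amounts to proving that at each contributor vertex of a minimal $U$ one of the two sides is a complete binary tree, and that the sizes of these complete binary subtrees form a partition of the bits of $n+1$. This structural statement can be extracted by tracking the equality cases in the inductive argument of the second step.
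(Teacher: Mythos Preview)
The paper does not actually prove this lemma: it is stated with a citation to Deutsch and Sagan, and the closely related structure theorem for minimal orbits (Theorem~\ref{thm:orbit structure}) is likewise attributed to Konvalinka without proof. So there is no in-paper argument to compare against; your proposal supplies what the paper omits, and your third step is essentially a derivation of Theorem~\ref{thm:orbit structure} together with the classical count $(2k-3)!!$ of unordered rooted binary trees with $k$ labeled leaves.

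Your three-step outline is correct, but one remark is misplaced. The case you flag as ``delicate'' ($a=b$ with $L\not\cong R$) is not delicate for the inequality. There $n+1=2(a+1)$, so $s_2(n+1)=s_2(a+1)$, and
\[
t(U)=t(L)+t(R)+1\ \geq\ 2\bigl(s_2(a+1)-1\bigr)+1\ =\ 2s_2(a+1)-1\ \geq\ s_2(a+1)-1
\]
goes through directly. Moreover, the fix you offer---that $L$ and $R$ cannot both be minimal on $a$ vertices---is false whenever $s_2(a+1)\geq 2$, since by the very lemma there are $(2s_2(a+1)-3)!!>1$ non-isomorphic minimal unordered trees on $a$ vertices. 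Fortunately you never need that claim. Where the case $a=b$ \emph{does} matter is in the equality analysis for your third step: there you should record that equality in $t(U)\geq s_2(n+1)-1$ forces no carries in $(a+1)+(b+1)=n+1$, which automatically excludes $a=b$ and yields the partition of the bits of $n+1$ between the two subtrees that drives your bijection.
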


We will use \emph{minimal orbit} to refer to an orbit of the minimum possible size, which is $2^s$. The set of all orbits of $\mathcal{T}_n$ under $G_n$ is denoted $\mathcal{U}_n$, and the set of minimal orbits is denoted $\mathcal{U}^{\text{min}}_n$.

\begin{definition}
The \emph{complete binary tree of depth $k$} is the binary tree of $2^k-1$ vertices where each vertex in layer $0, 1, \dotsc, k-2$ has two children and each vertex in layer $k-1$ has no children.
\end{definition}
Note that complete binary trees are exactly the trees whose orbit consist of just one tree.

Now we have the following key result which describes the structure of minimal orbits of trees on $n$ vertices. (Recall that these are orbits of size $2^{s_2(n+1)-1}$.) This was done more generally for $q$-ary trees by Konvalinka \cite{konvalinka2007divisibility}.

\begin{theorem}[Structure of minimal orbits]
\label{thm:orbit structure}
Let $s=s(n+1)-1$ and $n+1=2^{k_1}+\dotsb+2^{k_{s+1}}$ be the binary expansion of $n+1$. Then all trees in an orbit of $\mathcal{U}_n^{\text{min}}$ may be constructed in the following manner: construct an arbitrary binary tree with $s$ vertices, and then attach completely symmetric trees of depth $k_1, \dotsc, k_{s+1}$ to the $s+1$ endpoints.
\end{theorem}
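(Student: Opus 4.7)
The plan is to induct on $s = s_2(n+1) - 1$. The key tool is the orbit-size recursion: for a tree $T$ with root subtrees $T_L, T_R$, direct analysis of how the root-flip interacts with flips below gives
\[
|\mathcal{O}_T| = \begin{cases} 2|\mathcal{O}_{T_L}||\mathcal{O}_{T_R}| & \text{if } \mathcal{O}_{T_L} \neq \mathcal{O}_{T_R}, \\ |\mathcal{O}_{T_L}|^2 & \text{if } \mathcal{O}_{T_L} = \mathcal{O}_{T_R}. \end{cases}
\]
Equivalently, $|\mathcal{O}_T| = 2^{a(T)}$, where $a(T)$ counts \emph{asymmetric} vertices (those whose two subtrees lie in different orbits). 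The base case $s=0$ is immediate: $n+1=2^{k_1}$ and the unique tree with singleton orbit is the complete binary tree of depth $k_1$, which matches the empty-skeleton construction.

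For the inductive step, suppose $T$ lies in a minimal orbit with $s \geq 1$. A symmetric root would force $n_L = n_R = (n-1)/2$ and $|\mathcal{O}_T| = |\mathcal{O}_{T_L}|^2 \geq 2^{2(s_2((n+1)/2) - 1)} = 2^{2s}$, contradicting $|\mathcal{O}_T| = 2^s$. So the root is asymmetric, and
\[
2^s = |\mathcal{O}_T| = 2|\mathcal{O}_{T_L}||\mathcal{O}_{T_R}| \geq 2^{s_2(n_L+1) + s_2(n_R+1) - 1} \geq 2^{s_2(n+1) - 1} = 2^s,
\]
using Lemma~\ref{lem:orbit sizes} and the subadditivity $s_2(a) + s_2(b) \geq s_2(a+b)$. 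Equality throughout forces $T_L$ and $T_R$ to have minimal orbits and forces no carrying in the binary addition of $n_L+1$ and $n_R+1$, so that their supports partition $\{k_1,\dotsc,k_{s+1}\}$. Applying the inductive hypothesis to $T_L$ and $T_R$ and gluing their skeletons to the new asymmetric root produces a skeleton of $T$ with $s_L+s_R+1=s$ vertices and $s+1$ endpoints carrying the complete trees of depths $k_1,\dotsc,k_{s+1}$ exactly once.

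For the converse, every tree produced by the construction has exactly $s$ asymmetric vertices: interior vertices of the attached complete trees are symmetric (their two subtrees are identical complete trees of the same depth), while at each of the $s$ skeleton vertices, the multisets of attached depths on its two sides form disjoint nonempty subsets of the pairwise distinct set $\{k_1,\dotsc,k_{s+1}\}$ and hence cannot coincide. Thus such a tree has orbit size $2^s$, as required. The step I expect to be most delicate is the nonemptiness claim just invoked: it ultimately reduces to the standard fact that a binary sub-skeleton with $\sigma$ vertices carries exactly $\sigma+1$ external slots distributed across both sides of its root, so each side of every skeleton vertex inherits at least one attached complete tree.
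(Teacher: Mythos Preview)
Your argument is correct. The paper does not supply its own proof of this theorem; it cites Konvalinka's more general result for $q$-ary trees and then only sketches the inverse map (color a vertex black if its two subtrees are non-isomorphic, white otherwise, and observe that the black vertices recover the skeleton while the symmetries are generated by flips at black vertices). Your induction on $s$ via the orbit-size recursion $|\mathcal{O}_T|=2^{a(T)}$ makes this sketch rigorous and self-contained: the forward direction forces equality in $s_2(n_L+1)+s_2(n_R+1)\geq s_2(n+1)$, hence carry-free splitting of the exponent set and minimality of both subtree orbits, and the converse counts asymmetric vertices directly. The ``nonemptiness'' point you flag is indeed the only place requiring care, and your justification via the $\sigma+1$ external slots of a $\sigma$-vertex binary tree is exactly right; since the depths $k_1,\dotsc,k_{s+1}$ are pairwise distinct, the two sides of any skeleton vertex carry disjoint nonempty subsets of powers of two and hence subtrees of different sizes, so different orbits.
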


See Figure \ref{fig:minorbit} for an example. In constructing these minimal orbits, the $s$ vertices of the ``arbitrary binary tree'' will be marked black while the other vertices will be marked white.

\begin{figure}[h!]
\centering
    \begin{tikzpicture}[
    level distance=1cm,
    level 1/.style={sibling distance=4cm},
    level 2/.style={sibling distance=3cm},
    level 3/.style={sibling distance=1.5cm},
    level 4/.style={sibling distance=0.75cm},
    level 5/.style={sibling distance=0.375cm},
    level 6/.style={sibling distance=0.1875cm}]
      \node[black] {}
        child {node[black] {}
          child {node[white] {}
            child {node[white] {}
            child {node[white] {}
            child {node[white] {}
            child {node[white] {}}
            child {node[white] {}}}
            child {node[white] {}
            child {node[white] {}}
            child {node[white] {}}}
            }
            child {node[white] {}
            child {node[white] {}
            child {node[white] {}}
            child {node[white] {}}}
            child {node[white] {}
            child {node[white] {}}
            child {node[white] {}}}
            }
            }
            child {node[white] {}
            child {node[white] {}
            child {node[white] {}
            child {node[white] {}}
            child {node[white] {}}}
            child {node[white] {}
            child {node[white] {}}
            child {node[white] {}}}
            }
            child {node[white] {}
            child {node[white] {}
            child {node[white] {}}
            child {node[white] {}}}
            child {node[white] {}
            child {node[white] {}}
            child {node[white] {}}}
            }
            }
            }
          child {node[white] {}
          child {node[white] {}}
          child {node[white] {}}}
        }
        child {node[black] {}
        child {node[black, xshift=1.5cm] {}
        child {node[white] {}}
        child {node[white] {}
            child {node[white] {}
            child {node[white] {}
            child {node[white] {}}
            child {node[white] {}}}
            child {node[white] {}
            child {node[white] {}}
            child {node[white] {}}}
            }
            child {node[white] {}
            child {node[white] {}
            child {node[white] {}}
            child {node[white] {}}}
            child {node[white] {}
            child {node[white] {}}
            child {node[white] {}}}
            }
            }}
        };
    \end{tikzpicture}
\caption{An example for $n=54, n+1=2^5+2^4+2^2+2^1+2^0$.}
\label{fig:minorbit}
\end{figure}
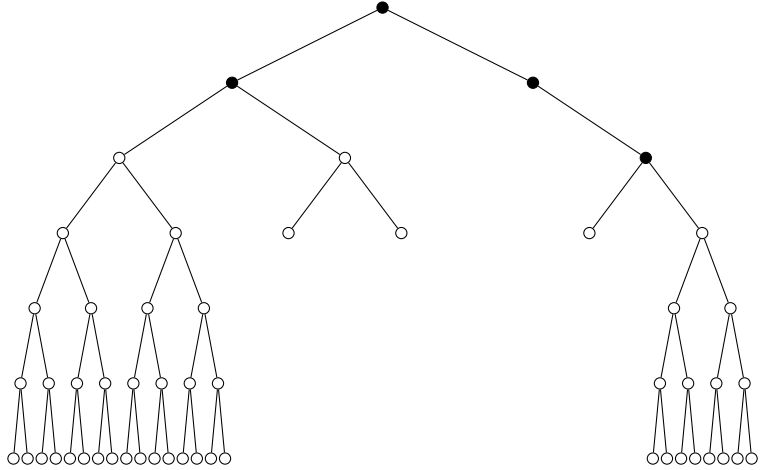
Furthermore, this mapping can be reversed in the following way: for every vertex, color it white if the left and right subtrees at that vertex are isomorphic, and otherwise color it black. The black vertices are the original $s$ vertices and all white vertices are part of complete binary trees.

The symmetries of such trees are generated precisely by reflections about the original $s$ vertices (the ones marked in black).

\begin{remark}
In Theorem \ref{thm:orbit structure}, the term $2^0$ may appear in the binary expansion of $n+1$. This corresponds to an empty tree. In the example of Figure \ref{fig:minorbit}, all the black vertices have two children except for the third one from the left, which has only a right child. This is because the empty tree was assigned to its left child.
\end{remark}
\subsection{Mapping $\mathcal{F}$ to $2$-adic integers} \label{subsec:2-adics}

Let $\mathcal{F}$ denote the set of functions $f: \mathbb{Z}_{\geq 0}\to\mathbb{Z}$ such that $2^n\mid (\Delta^nf)(x)$ for all $n\geq 0$ and $x\in \mathbb{Z}_{\geq 0}$. Define the shift operator $S$ by
\[(Sf)(x)=f(x+1).\]

\begin{lemma}
\label{lem:productrule}
The product rule
\[\Delta^n(f\cdot g)=\sum_{k=0}^{n}\binom{n}{k}\Delta^{n-k}\left(S^k(f)\right)\Delta^k(g)\]
holds, which can be extended to multiple functions as
\[\Delta^n(f_1\dotsm f_m)=\sum_{a_1+\dotsb+a_m=n}\binom{n}{a_1, \dotsc, a_m}\Delta^{a_1}(S^{a_2+\dotsb+a_m}f_1)\cdot \Delta^{a_2}(S^{a_3+\dotsb+a_m}f_2)\dotsm \Delta^{a_m}(f_m).\]
\end{lemma}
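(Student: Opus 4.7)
The plan is to prove the two-function identity by induction on $n$, then bootstrap to the multi-function version by induction on the number of functions $m$. The whole argument rests on two elementary facts: the discrete Leibniz rule
\[\Delta(fg)(x) = f(x+1)g(x+1) - f(x)g(x) = (Sf)(x)\,(\Delta g)(x) + (\Delta f)(x)\,g(x),\]
and the commutativity $S\Delta = \Delta S$, which is immediate from the definitions since both operators act pointwise on $\mathbb{Z}_{\geq 0}$.

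For the base case $n=0$, the claim reduces to $fg = fg$. For the inductive step, I would apply $\Delta$ to the inductive hypothesis
\[\Delta^n(fg) = \sum_{k=0}^{n}\binom{n}{k}\Delta^{n-k}(S^k f)\,\Delta^k g,\]
distribute $\Delta$ across each summand using the Leibniz rule, and use $S\Delta^{n-k}(S^k f) = \Delta^{n-k}(S^{k+1}f)$ to obtain
\[\Delta^{n+1}(fg) = \sum_{k=0}^{n}\binom{n}{k}\Delta^{n-k}(S^{k+1}f)\Delta^{k+1}g + \sum_{k=0}^{n}\binom{n}{k}\Delta^{n-k+1}(S^k f)\Delta^k g.\]
Reindexing the first sum via $k \mapsto k-1$ and applying Pascal's identity $\binom{n}{k-1}+\binom{n}{k}=\binom{n+1}{k}$ term by term yields the desired formula with $n$ replaced by $n+1$.

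For the multi-function extension, I would induct on $m$, the $m=2$ case having just been established. Given the identity for $m-1$ functions, write $f_1 \dotsm f_m = f_1 \cdot (f_2 \dotsm f_m)$, apply the two-function rule with $n-k$ derivatives on $f_1$ (shifted by $S^k$) and $k$ derivatives on the product $f_2 \dotsm f_m$, and then expand the latter using the inductive hypothesis with $k = a_2 + \dotsb + a_m$. The shift $S^k$ distributes over the product as $S^k(f_2 \dotsm f_m) = (S^k f_2)\dotsm (S^k f_m)$, and combining with the nested shift from the inductive formula produces the shift $S^{a_{j+1}+\dotsb+a_m}$ on $f_j$ for each $j$. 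The multinomial coefficient appears via $\binom{n}{k}\binom{k}{a_2,\dotsc,a_m} = \binom{n}{a_1,a_2,\dotsc,a_m}$ with $a_1 = n-k$.

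The only step requiring any care is bookkeeping the shifts in the multi-function induction; everything else is mechanical. I do not anticipate a genuine obstacle — the identity is a purely formal consequence of the Leibniz rule and the commutation $S\Delta = \Delta S$.
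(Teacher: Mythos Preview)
Your argument is correct. The paper states this lemma without proof, treating it as a standard identity, so there is nothing to compare against; your induction on $n$ via the discrete Leibniz rule $\Delta(fg) = (Sf)(\Delta g) + (\Delta f)g$ together with $S\Delta = \Delta S$, followed by induction on $m$ using the multinomial identity $\binom{n}{k}\binom{k}{a_2,\dotsc,a_m} = \binom{n}{a_1,\dotsc,a_m}$, is the natural route and goes through as written.

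One small expository remark: in the multi-function step you mention that ``$S^k$ distributes over the product $f_2\dotsm f_m$,'' but in the decomposition you actually set up there is no shift on that product --- the two-function rule yields $\Delta^{n-k}(S^k f_1)\cdot \Delta^k(f_2\dotsm f_m)$, and the inductive hypothesis applied to $\Delta^k(f_2\dotsm f_m)$ already supplies the correct shifts $S^{a_{j+1}+\dotsb+a_m}$ on each $f_j$ for $j\geq 2$. So that sentence can simply be deleted; the bookkeeping works without it.
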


\begin{lemma}
\label{lem:F closure}
The set $\mathcal{F}$ is closed under the following operations:
\[f\mapsto Sf, \quad (f, g)\mapsto f\cdot g, \quad \text{ and }\quad (f, g)\mapsto \langle f, g\rangle = \frac{f(x+1)g(x)+f(x)g(x+1)}{2}.\]
\end{lemma}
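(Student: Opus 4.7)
The plan is to establish the three closure properties in turn, the first two being routine consequences of the defining condition and the product rule, and the third requiring a short trick.

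For the shift operator, I would simply observe that $(\Delta^n Sf)(x) = (\Delta^n f)(x+1)$, which is divisible by $2^n$ directly by the defining property of $\mathcal{F}$ applied at $x+1$, so $Sf\in\mathcal{F}$.

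For the product $f\cdot g$, I would invoke Lemma \ref{lem:productrule}: each summand
\[\binom{n}{k}\Delta^{n-k}(S^k f)(x)\cdot\Delta^k g(x)=\binom{n}{k}\Delta^{n-k}f(x+k)\cdot\Delta^k g(x)\]
is divisible by $2^{n-k}\cdot 2^k=2^n$ by the hypotheses on $f$ and $g$, so the whole sum is divisible by $2^n$ and $f\cdot g\in\mathcal{F}$.

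For the bracket $\langle f,g\rangle$, I would first expand $f(x+1)=f(x)+(\Delta f)(x)$ and $g(x+1)=g(x)+(\Delta g)(x)$ to rewrite
\[\langle f,g\rangle(x)=f(x)g(x)+\frac{(\Delta f)(x)g(x)+f(x)(\Delta g)(x)}{2},\]
which is integer-valued since $\Delta f$ and $\Delta g$ take even values. The decisive observation is that when $f\in\mathcal{F}$, the function $\Delta f$ satisfies the strengthened bound $\Delta^m(\Delta f)(x)=\Delta^{m+1}f(x)\equiv 0\pmod{2^{m+1}}$. Applying the product rule to $\Delta^n((\Delta f)\cdot g)$ then yields terms divisible by $2^{n-k+1}\cdot 2^k=2^{n+1}$, so after the factor $\tfrac{1}{2}$ the result is still divisible by $2^n$; the same holds symmetrically for $f\cdot\Delta g$. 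Combined with $\Delta^n(fg)$ from the product closure, all three pieces of $\Delta^n\langle f,g\rangle$ are multiples of $2^n$, proving $\langle f,g\rangle\in\mathcal{F}$.

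The main obstacle is this third case: one has to recognize that the apparently problematic $\tfrac{1}{2}$ in the definition of $\langle\cdot,\cdot\rangle$ is compensated exactly by the extra power of $2$ present in every iterated difference of $\Delta f$ (and of $\Delta g$). Once this is noticed, the argument for the bracket reduces to the product-closure case with a one-step gain in the $2$-adic valuation.
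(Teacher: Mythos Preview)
Your proof is correct and follows essentially the same approach as the paper: both arguments handle $S$ and the product via the product rule, and both treat $\langle f,g\rangle$ by rewriting it as $f\cdot g + \tfrac{1}{2}\bigl((\Delta f)\cdot g + f\cdot(\Delta g)\bigr)$ and then invoking the product rule again, with the extra factor of $2$ in $\Delta^{m+1}f$ compensating for the $\tfrac{1}{2}$. Your write-up is in fact a bit more explicit than the paper's about why that compensation works.
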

\begin{proof}
    Closure under $S$ is clear. For multiplication, we may use the product rule: each individual term in the expansion of $\Delta^n(f\cdot g)$ will be divisible by $2^n$. Likewise, by writing
    \[\langle f, g\rangle = f\cdot g + \frac{\Delta(f)\cdot g+f\cdot \Delta(g)}{2}\]
    and expanding with the product rule, we can show that $\langle f, g\rangle\in\mathcal{F}$ if $f, g\in\mathcal{F}$.
\end{proof}

Next, observe that for $f\in \mathcal{F}$ and any nonnegative integer $n$, we have $(\Delta^nf)(x)\equiv 0\pmod{2^n}$ by definition and also $(\Delta^{n+1}f)(x)\equiv 0\pmod{2^{n+1}}$. Then $\Delta^nf$ is constant modulo $2^{n+1}$ and must be equal to either $2^{n}$ or $0$. Define a mapping
\[\varepsilon: \mathcal{F}\to\{0, 1\}^{\mathbb{N}}\]
by sending $f$ to $(\varepsilon_0^f, \varepsilon_1^f, \dotsc)$ where 
\[
\varepsilon_n^f=\begin{cases}
0 & \text{ if }\Delta^nf\equiv 0\pmod {2^{n+1}} \\
1 & \text{ if }\Delta^nf\equiv 2^n\pmod{2^{n+1}}
\end{cases}.
\]

Define the weight of a binary tree $T$ by
\[w_b(T; x)=\prod_{v\in T}b(x+l_v)\]
where the product is over all vertices $v$ of $T$ and $l_v$ the number of left-edges in the path from the root to $v$. Evaluating this weight at $x=0$ and summing over all binary trees gives $C^b_n$. For each orbit $\mathcal{O}$, define a total weight function $w_{\mathcal{O}}(x)$ by
\[w_{\mathcal{O}}(x)=\sum_{T\in\mathcal{O}}w_b(T)\]
and the average weight function $r_b(\mathcal{O}; x)$ by
\[r_b(\mathcal{O}; x)=\frac{w_b(\mathcal{O}; x)}{\lvert\mathcal{O}\rvert}.\]
We have the recursive formula
\[r_b(\mathcal{O}; x)=b(x)\cdot\langle r_b(\mathcal{O}_L; x), r_b(\mathcal{O}_R; x)\rangle\]
(see Postnikov and Sagan \cite{postnikov2007power}) where $\mathcal{O}_L$ and $\mathcal{O}_R$ are the orbits associated to the left and right subtrees of the root. By Lemma \ref{lem:F closure} and induction, all functions $r_b(\mathcal{O}; x)$ are in $\mathcal{F}$.

\begin{definition}\label{def:epsilonO}
For an orbit $\mathcal{O}$ of binary trees, let $\varepsilon^{\mathcal{O}}_m=\varepsilon^f_m$ for $f=r_b(\O)\in \mathcal{F}$.
\end{definition}

From now on, we will also let $\varepsilon_k=\varepsilon^b_k$ for all $k$, i.e. the superscript $b$ will be omitted.

\begin{lemma}
\label{lem:epsilon formula}
For an orbit $\mathcal{O}$ with left and right orbits $\mathcal{O}_L, \mathcal{O}_R$, the following formula holds:
\[\varepsilon^{\mathcal{O}}_{m} \equiv\sum_{i+j+k=m}\binom{m}{i, j, k}\varepsilon_{k}(\varepsilon^{\mathcal{O}_L}_{i}\varepsilon^{\mathcal{O}_R}_j+\varepsilon^{\mathcal{O}_L}_{i+1}\varepsilon^{\mathcal{O}_R}_j+\varepsilon^{\mathcal{O}_L}_{i}\varepsilon^{\mathcal{O}_R}_{j+1})\pmod{2}.\]
\end{lemma}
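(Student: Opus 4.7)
The plan is to reduce the bilinear operation $\langle\cdot,\cdot\rangle$ to ordinary products and then establish a multiplicative rule for $\varepsilon$. Write $L=r_b(\mathcal{O}_L)$ and $R=r_b(\mathcal{O}_R)$, so that $r_b(\mathcal{O})=b\cdot\langle L,R\rangle$. Expanding as in the proof of Lemma~\ref{lem:F closure},
\[\langle L,R\rangle=L\cdot R+\tfrac{1}{2}(\Delta L)\cdot R+L\cdot\tfrac{1}{2}(\Delta R),\]
I set $L'=\tfrac{1}{2}\Delta L$ and $R'=\tfrac{1}{2}\Delta R$. A short check shows $L',R'\in\mathcal{F}$, since $\Delta^n L'=\Delta^{n+1}L/2$ is divisible by $2^n$; moreover $\Delta^n L'\equiv\varepsilon^{\mathcal{O}_L}_{n+1}\cdot 2^n\pmod{2^{n+1}}$, so $\varepsilon^{L'}_n=\varepsilon^{\mathcal{O}_L}_{n+1}$ and similarly $\varepsilon^{R'}_n=\varepsilon^{\mathcal{O}_R}_{n+1}$. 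Thus $r_b(\mathcal{O})=b\cdot L\cdot R+b\cdot L'\cdot R+b\cdot L\cdot R'$, expressing the function of interest as a sum of three ordinary products.

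The main technical step is to prove an intermediate product rule for $\varepsilon$: for any $g_1,\dotsc,g_s\in\mathcal{F}$,
\[\varepsilon^{g_1\cdots g_s}_m\equiv\sum_{a_1+\dotsb+a_s=m}\binom{m}{a_1,\dotsc,a_s}\varepsilon^{g_1}_{a_1}\dotsm\varepsilon^{g_s}_{a_s}\pmod 2.\]
To establish this, I apply Lemma~\ref{lem:productrule} to $\Delta^m(g_1\dotsm g_s)$ and examine each summand $\binom{m}{a_1,\dotsc,a_s}\prod_i\Delta^{a_i}(S^{a_{i+1}+\dotsb+a_s}g_i)$. The key observation is that $\Delta^a g$ is constant modulo $2^{a+1}$ for $g\in\mathcal{F}$ (because $\Delta^{a+1}g$ is divisible by $2^{a+1}$), so the shift operators are invisible at that precision, and each factor satisfies $\Delta^{a_i}(S^{\cdots}g_i)=2^{a_i}(\varepsilon^{g_i}_{a_i}+2u_i)$ for some integer-valued $u_i$. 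Multiplying across, each summand contributes $\binom{m}{a_1,\dotsc,a_s}\cdot 2^m\prod_i\varepsilon^{g_i}_{a_i}$ modulo $2^{m+1}$; summing over $(a_1,\dotsc,a_s)$ and dividing by $2^m$ yields the claimed identity.

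Finally I apply this sub-lemma to each of the three products $bLR$, $bL'R$, $bLR'$, and combine using the immediate additivity $\varepsilon^{f_1+f_2}_m\equiv\varepsilon^{f_1}_m+\varepsilon^{f_2}_m\pmod 2$ (from linearity of $\Delta^m$). Substituting $\varepsilon^{L'}_i=\varepsilon^{\mathcal{O}_L}_{i+1}$ and $\varepsilon^{R'}_j=\varepsilon^{\mathcal{O}_R}_{j+1}$ collects the three sums into the single formula asserted by the lemma. The only real obstacle is the careful bookkeeping of the shift operators in the multi-function product rule, which is resolved cleanly by the constancy-mod-$2^{a+1}$ observation; everything else is routine $2$-adic accounting.
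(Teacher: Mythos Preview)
Your proof is correct and follows essentially the same approach as the paper: both use the decomposition $\langle L,R\rangle = LR + \tfrac{1}{2}(\Delta L)R + \tfrac{1}{2}L(\Delta R)$, expand with the product rule of Lemma~\ref{lem:productrule}, and exploit the fact that $\Delta^a g$ is constant modulo $2^{a+1}$ so that the shift operators become irrelevant at the needed precision. The only difference is packaging---you abstract out a clean multiplicative rule for $\varepsilon$ on arbitrary products in $\mathcal{F}$ and then invoke it three times, whereas the paper carries out the same divisibility analysis directly on each of the three sums; the underlying computation is identical.
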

\begin{proof}
    We make use of the identity 
    \[\langle f, g\rangle = f\cdot g + \frac{\Delta(f)\cdot g+f\cdot \Delta(g)}{2}.\]
    By expanding with the product rule, we have
    \begin{align*}
        \Delta^mr_b(\mathcal{O}; x)=\sum_{i+j+k=m} &\binom{m}{i, j, k}\Big[\Delta^k(S^{i+j}b)\cdot\Big(\Delta^{i}(S^jr_b(\mathcal{O}_L; x))\cdot \Delta^j(r_b(\mathcal{O}_R; x))+ \\
        &\frac{\Delta^{i+1}(S^jr_b(\mathcal{O}_L; x))\cdot \Delta^j(r_b(\mathcal{O}_R; x))+\Delta^{i}(S^jr_b(\mathcal{O}_L; x))\cdot \Delta^{j+1}(r_b(\mathcal{O}_R; x))}{2}\Big)\Big].
    \end{align*}
    We may now break this up into three sums, each of which will correspond to a term in the statement in the lemma. The first term is
    \[\sum_{i+j+k=m}\binom{m}{i, j, k}\Delta^k(S^{i+j}b)\cdot\Delta^{i}(S^jr_b(\mathcal{O}_L; x))\cdot \Delta^j(r_b(\mathcal{O}_R; x))\]
    As $b, r_b(\mathcal{O}_L; x), r_b(\mathcal{O}_R; x)\in \mathcal{F}$, each term in this summation is divisible by $2^k\cdot 2^i\cdot 2^j=2^m$. This term is \emph{not} divisible by $2^{m+1}$ if and only if all of the following hold:
    \begin{itemize}
        \item $\binom{m}{i, j, k}$ is odd.
        \item $\Delta^k(S^{i+j}b)\equiv 2^k\pmod{2^{k+1}}$.
        \item $\Delta^i(S^jr_b(\mathcal{O}_L; x))\equiv 2^i\pmod{2^{i+1}}$.
        \item $\Delta^{j}(r_b(\mathcal{O}_R; x))\equiv 2^j\pmod{2^{j+1}}$.
    \end{itemize}
    Therefore the residue modulo $2^{m+1}$ is equal to the residue of
    \[\sum_{i+j+k=m}\binom{m}{i, j, k}\varepsilon_k\varepsilon^{\mathcal{O}_L}_i\varepsilon^{\mathcal{O}_R}_{j}\]
    modulo $2$, multiplied by $2^m$.
    
    Similarly, the second term in the expansion is
    \[\frac{1}{2}\sum_{i+j+k=m}\binom{m}{i, j, k}\Delta^k(S^{i+j}b)\cdot\Delta^{i+1}(S^jr_b(\mathcal{O}_L; x))\cdot \Delta^j(r_b(\mathcal{O}_R; x)).\]
    The summation without the factor of $\frac{1}{2}$ is always a multiple of $2^{m+1}$, and will not be a multiple of $2^{m+2}$ if and only if $\binom{m}{i, j, k}$ is odd and $\varepsilon^b_k=\varepsilon^{\mathcal{O}_L}_{i+1}=\varepsilon^{\mathcal{O}_R}_{j}=1$. The third term can be handled similarly. Summing all the equations yields the lemma.
    
    In the case where $\mathcal{O}_L$ or $\mathcal{O}_R$ is empty, the associated function is $r_b=1$ with $\varepsilon^{r_b}_0=1$ and $\varepsilon^{r_b}_k=0$ for $k\geq 1$. The proof of the lemma works the same in this case.
\end{proof}

\begin{lemma}
    \label{lem:reduction}
    Let $\mathcal{O}$ be an orbit of binary trees on $n$ vertices, and suppose that for some vertex $v$, the subtree with root $v$ is a complete binary tree of depth $k$. Let $\mathcal{O}'$ be the same orbit of trees with the subtree at $v$ replaced by a single vertex at $v$. Then \[\varepsilon_m^{\mathcal{O}}=\varepsilon_0^{2^k-2}\varepsilon_m^{\mathcal{O}'}.\]
\end{lemma}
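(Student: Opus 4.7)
The plan is induction on $n$, the common vertex count of trees in $\mathcal{O}$. The base case $n=1$ forces $k=1$ and $\mathcal{O}=\mathcal{O}'$, so the claim holds trivially with $\varepsilon_0^{2^1-2}=1$. The inductive step splits by whether $v$ is the root of $\mathcal{O}$ or lies in a proper subtree.

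First assume $v$ lies in a proper subtree, say the left one (the other case is symmetric). Then the left-orbit $\mathcal{O}_L$ itself has a complete binary subtree of depth $k$ at $v$, with reduction $\mathcal{O}_L'$, and $|\mathcal{O}_L|<n$; by the inductive hypothesis, $\varepsilon^{\mathcal{O}_L}_i = \varepsilon_0^{2^k-2}\varepsilon^{\mathcal{O}_L'}_i$ for all $i$. Substituting this into Lemma~\ref{lem:epsilon formula}, the right-hand side is linear in the symbols $\varepsilon^{\mathcal{O}_L}_i$ and $\varepsilon^{\mathcal{O}_L}_{i+1}$, so the scalar $\varepsilon_0^{2^k-2}\in\{0,1\}$ factors out cleanly, and what remains is exactly the expression given by Lemma~\ref{lem:epsilon formula} for $\mathcal{O}'$ (whose left subtree orbit is $\mathcal{O}_L'$ and whose right subtree orbit is $\mathcal{O}_R$). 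This yields $\varepsilon^{\mathcal{O}}_m = \varepsilon_0^{2^k-2}\varepsilon^{\mathcal{O}'}_m$.

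Now assume $v$ is the root, so $\mathcal{O}=\{B_k\}$ is the singleton orbit and $\mathcal{O}'$ is the one-vertex orbit. The claim becomes $\varepsilon^{p_k}_m = \varepsilon_0^{2^k-2}\varepsilon_m$ for $p_k(x):=\prod_{u\in B_k}b(x+l_u) = b(x)\cdot p_{k-1}(x+1)\cdot p_{k-1}(x)$. The crucial input is the Frobenius-style identity
\[
\varepsilon^{f^2}_m \equiv \varepsilon^f_0\cdot[m=0]\pmod{2},\qquad f\in\mathcal{F},
\]
proved directly from the product rule modulo $2$: since $\binom{m}{a,b}$ is odd precisely when $a$ and $b$ are bitwise disjoint with $a+b=m$, pairing $(a,b)\leftrightarrow(b,a)$ in $\sum_{a+b=m}\binom{m}{a,b}\varepsilon^f_a\varepsilon^f_b$ leaves $(0,0)$ as the unique fixed point, contributing only when $m=0$. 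Combining this identity with the shift invariance $\varepsilon^{Sf}=\varepsilon^f$ (which holds because $\Delta^n f\bmod 2^{n+1}$ is a constant function for $f\in\mathcal{F}$) yields $\varepsilon^{Sp_{k-1}\cdot p_{k-1}}_m = \varepsilon^{p_{k-1}}_0\cdot[m=0]$. One more application of the product rule, now to $p_k = b\cdot(Sp_{k-1}\cdot p_{k-1})$, gives $\varepsilon^{p_k}_m = \varepsilon_m\cdot\varepsilon^{p_{k-1}}_0$, and since $\varepsilon^{p_{k-1}}_0 = p_{k-1}(0)\bmod 2 = \varepsilon_0^{2^{k-1}-1}$ with $\varepsilon_0\in\{0,1\}$, for $k\geq 2$ this equals $\varepsilon_0 = \varepsilon_0^{2^k-2}$, closing the argument; the case $k=1$ is immediate since $p_1=b$.

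The main obstacle I anticipate is the Frobenius-style collapse in the root case: one must carefully argue that squaring in the image of $\varepsilon$ annihilates all non-constant modes modulo $2$, and then check that the resulting power of $\varepsilon_0$ correctly matches $2^k-2$ in the degenerate arithmetic of $\{0,1\}$. By contrast, the non-root case is essentially bookkeeping once the linearity of Lemma~\ref{lem:epsilon formula} in $\varepsilon^{\mathcal{O}_L}$ is observed.
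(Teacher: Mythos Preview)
Your argument is correct and follows essentially the same two-part strategy as the paper: reduce the non-root case via linearity of the $\varepsilon$-recursion in one of the subtree factors, and in the root case exploit the $(a,b)\leftrightarrow(b,a)$ symmetry together with the evenness of $\binom{2i}{i}$ to kill all but the constant term. The only packaging difference is that the paper runs the root case through Lemma~\ref{lem:epsilon formula} (the trinomial formula for orbits) and an inner induction on $k$, whereas you work directly with the product $p_k=b\cdot Sp_{k-1}\cdot p_{k-1}$ and isolate the cancellation as a standalone ``Frobenius'' identity $\varepsilon^{f^2}_m\equiv \varepsilon^f_0\,[m=0]$; this is arguably a cleaner way to state the same mechanism and lets you avoid the inner induction by evaluating $\varepsilon^{p_{k-1}}_0$ directly.
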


\begin{proof}
    We prove this lemma by induction. If $v$ is not the root vertex, then we use lemma \ref{lem:epsilon formula} to expand. Exactly one of the left and right subtrees contains $v$, so the inductive hypothesis applies to factor out $\varepsilon_0^{2^k-2}$. Hence we can reduce to the base case where $v$ is the root, i.e. we have a complete binary tree of depth $k$. To prove this case, we induct on $k$.
    
    For $k=1$ the lemma follows because $r_b(\mathcal{O}; x)=b(x)$. For the inductive step, we use Lemma \ref{lem:epsilon formula} where $\mathcal{O}_L$ and $\mathcal{O}_R$ are complete binary trees of depth $k-1$. Note that by symmetry of $i$ and $j$, the $\varepsilon^{\mathcal{O}_L}_{i+1}\varepsilon^{\mathcal{O}_R}_j+\varepsilon^{\mathcal{O}_L}_{i}\varepsilon^{\mathcal{O}_R}_{j+1}$ term cancels out in the sum, and $\varepsilon^{\mathcal{O}_L}_{i}\varepsilon^{\mathcal{O}_R}_j$ also cancels out for all terms where $i\neq j$. Finally, note that if $i=j > 0$, then the binomial coefficient $\binom{m}{i, j, k}=\binom{m}{k}\binom{i+j}{i}$ is divisible by $\binom{2i}{i}$, which is even. Therefore the only remaining term in the summation is the $i=j=0$ term. By the inductive hypothesis, we have
    \[\varepsilon^{\mathcal{O}}_m\equiv \varepsilon_m^{\mathcal{O'}}\varepsilon_0^{2(2^{k-1}-1)}=\varepsilon_0^{2^k-2}\varepsilon_m^{\mathcal{O}'}\]
    as desired.
\end{proof}

The recursive formula illustrated in Lemma~\ref{lem:epsilon formula} enables us to explicitly write down $\varepsilon_m^{\O}$ in an elegant way. This combinatorial description gives another quick proof to Lemma~\ref{lem:reduction} but is not used in other places of the paper. Readers are free to skip to the proof of the main theorem.

We view an orbit $\O$ as a rooted binary tree. Traditionally, we say that a vertex $v$ is a descendant of another vertex $w$ in $\O$ if the unique path connecting $v$ and the root passes through $w$. Similarly, we say that a vertex $v$ is a \textit{descendant} of an edge $e$ in $\O$ if the unique path connecting $v$ and the root contains $e$. In addition, we say that an edge $e$ of $\O$ connecting two vertices $u$ and $v$ \textit{originates from} $u$ if $v$ is a descendant of $e$ (or of $u$). Consequently, two edges $e$ and $e'$ are called \textit{siblings} if they originate from the same vertex.
\begin{definition}\label{def:coin}
A \textit{coin-configuration} $C$ on $\O$ of order $m$ consists of the following data:
\begin{itemize}
    \item a subset of edges $E$ of $\O$ such that no siblings are simultaneously selected,
    \item a distribution of distinct \textit{coins} $\{1,\ldots,m\}\cup E$ to all vertices of $\O$ such that each coin $c_e$ labeled by an edge $e\in E$ is assigned to a descendent of $e$.
\end{itemize}
For such a coin-configuration $C$, let $C_v$ denote the set of coins at $v$. Define its weight $\wt(C)=\prod_{v\in\O}\varepsilon_{|C_v|}$ where the product is over all vertices $v$ in $\O$. Moreover, let $\mathcal{C}^{\O}_m$ denote the set of all coin-configurations on $\O$ of order $m$. 
\end{definition}
See Figure~\ref{fig:coin} for an example of a coin-configuration of order 9 on an orbit $\O$ which is a rooted binary tree on 8 vertices, where the selected edges in $E$ are highlighted as thick line segments and the distribution of coins is written underneath the vertices. This particular example has weight $(\varepsilon_0)^2(\varepsilon_1)(\varepsilon_2)^4(\varepsilon_3)$.
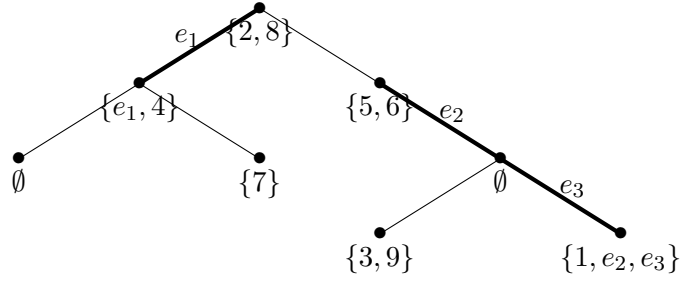
\begin{figure}[h!]
\centering
\begin{tikzpicture}[x=1.6cm]
\node at (0,0) {$\bullet$};
\node[below] at (0,0) {$\{2,8\}$};
\node at (-1,-1) {$\bullet$};
\node[below] at (-1,-1) {$\{e_1,4\}$};
\node at (-2,-2) {$\bullet$};
\node[below] at (-2,-2) {$\emptyset$};
\node at (0,-2) {$\bullet$};
\node[below] at (0,-2) {$\{7\}$};
\node at (1,-1) {$\bullet$};
\node[below] at (1,-1) {$\{5,6\}$};
\node at (2,-2) {$\bullet$};
\node[below] at (2,-2) {$\emptyset$};
\node at (3,-3) {$\bullet$};
\node[below] at (3,-3) {$\{1,e_2,e_3\}$};
\node at (1,-3) {$\bullet$};
\node[below] at (1,-3) {$\{3,9\}$};
\draw (-2,-2)--(-1,-1)--(0,0)--(1,-1)--(2,-2)--(3,-3);
\draw (0,-2)--(-1,-1);
\draw (2,-2)--(1,-3);
\draw[ultra thick] (0,0)--(-1,-1);
\draw[ultra thick] (1,-1)--(2,-2);
\draw[ultra thick] (2,-2)--(3,-3);
\node at (-0.6,-0.4) {$e_1$};
\node at (1.6,-1.4) {$e_2$};
\node at (2.6,-2.4) {$e_3$};
\end{tikzpicture}
\caption{An example of a coin-configuration of order 9 with weight $(\varepsilon_0)^2(\varepsilon_1)(\varepsilon_2)^4(\varepsilon_3)$.}
\label{fig:coin}
\end{figure}

\begin{proposition}\label{prop:coin}
We have the following explicit formula for an orbit $\O$:
$$\varepsilon_m^{\O}=\sum_{C\in\mathcal{C}^{\O}_m}\wt(C)$$
where the sum is over all coin-configurations $C$ on $\O$ of order $m$.
\end{proposition}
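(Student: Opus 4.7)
The plan is to prove the formula by induction on the number of vertices of the orbit $\O$, matching the three-term recursion from Lemma~\ref{lem:epsilon formula} against a case-analysis at the root of $\O$.

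For the base cases, when $\O$ is empty we have $r_b(\O;x)=1$, so $\varepsilon_0^{\O}=1$ and $\varepsilon_m^{\O}=0$ for $m\geq 1$; the combinatorial side is also $1$ (the empty configuration) for $m=0$ and $0$ otherwise. When $\O$ is a single vertex, $r_b(\O;x)=b(x)$ and $\varepsilon_m^{\O}=\varepsilon_m$, matching the unique configuration that places all $m$ coins at that vertex, whose weight is $\varepsilon_m$.

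For the inductive step, fix $\O$ with left and right subtrees $\O_L,\O_R$ (either possibly empty) and decompose $\mathcal{C}^{\O}_m$ by recording the triple $(k,L,R)$, where $k=|C_{\text{root}}|$ is the number of coins at the root and $L,R\in\{0,1\}$ indicate whether the edges $e_L,e_R$ out of the root belong to the selected edge set $E$. Since siblings cannot both be selected, only three combinations occur: $(L,R)\in\{(0,0),(1,0),(0,1)\}$. In each case, removing the root splits $C$ into a coin-configuration on $\O_L$ and one on $\O_R$; the multinomial coefficient $\binom{m}{i,j,k}$ counts how the numerical coins outside $C_{\text{root}}$ split between the two subtrees, where $i+j+k=m$. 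For $(0,0)$ these restrictions are genuine configurations of orders $i$ and $j$. For $(1,0)$, the coin $c_{e_L}$ must live at a descendant of $e_L$, hence inside $\O_L$, and relabeling it as an extra numerical coin converts the restriction into a configuration of order $i+1$ on $\O_L$ and order $j$ on $\O_R$; the case $(0,1)$ is symmetric.

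Applying the inductive hypothesis to each subtree restriction yields
\[
\sum_{i+j+k=m}\binom{m}{i,j,k}\varepsilon_k\bigl(\varepsilon_i^{\O_L}\varepsilon_j^{\O_R}+\varepsilon_{i+1}^{\O_L}\varepsilon_j^{\O_R}+\varepsilon_i^{\O_L}\varepsilon_{j+1}^{\O_R}\bigr),
\]
which is precisely the recursion for $\varepsilon_m^{\O}$ from Lemma~\ref{lem:epsilon formula}, completing the induction. The step that requires the most care is the clean identification of the selected root-edge coin with an additional numerical coin in the restricted subtree configuration, together with checking that the no-siblings condition on $E$ restricts naturally to each subtree. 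Edge cases where $\O_L$ or $\O_R$ is empty are handled automatically, since an empty orbit has $\varepsilon_0=1$ and $\varepsilon_k=0$ for $k\geq 1$, which kills exactly the nonexistent-edge terms in the recursion.
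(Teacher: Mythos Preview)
Your proof is correct and follows essentially the same approach as the paper: induction on the number of vertices, with the inductive step splitting $\mathcal{C}^{\O}_m$ into three cases according to which (if any) of the root edges lies in $E$, and identifying the three resulting sums with the three terms of Lemma~\ref{lem:epsilon formula}. Your treatment is in fact a bit more explicit than the paper's about the relabeling of $c_{e_L}$ as an extra numerical coin and about the empty-subtree edge cases.
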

\begin{proof}
The proposition is a direct consequence of Lemma~\ref{lem:epsilon formula} and induction on the number of vertices of $\O$. For the base case where $\O$ is a single vertex, by definition, $\varepsilon^{\O}_m$ equals $\varepsilon_m$. At the same time, we have exactly 1 coin-configuration with weight $\varepsilon_m$ since there are no edges and all $m$ coins must be assigned to the unique vertex in $\O$.

Now assume that our proposition is true for all orbits with less than $n$ vertices and consider an orbit $\O$ with $n$ vertices. Similarly as in Lemma~\ref{lem:epsilon formula}, let $\O_L$ and $\O_R$ be the left and right subtrees of the root respectively and let $e_L$ and $e_R$ be the left and right edges originating from the root respectively. Consider $\sum_{C\in\mathcal{C}^{\O}_m}\wt(C)$, which is the RHS of the proposition. For a coin-configuration $C$ with a chosen subset $E$ of the edges, we cannot have both $e_L$ and $e_R$ lie in $E$. Then there are 3 possibilities: (1) $e_L\in E$, $e_R\notin E$; (2) $e_L\notin E$, $e_R\in E$; (3) $e_L,e_R\notin E$. For (1), we have $\binom{m}{i,j,k}$ ways to assign $k$ coins to the root, $i+1$ total coins to the left subtree ($i$ original coins in $\{1,\ldots,m\}$ and a new coin from $e_L$), and $j$ total coins to the right subtree. This provides a sum of $$\sum_{i+j+k=m}\binom{m} {i,j,k}\varepsilon_k\sum_{C\in\mathcal{C}^{\O_L}_{i+1}}\wt(C)\sum_{C\in\mathcal{C}^{\O_R}_{j}}\wt(C)=\sum_{i+j+k=m}\binom{m} {i,j,k}\varepsilon_k\varepsilon_{i+1}^{\O_L}\varepsilon_{j}^{\O_R}$$
by induction. Similarly handle case (2) and (3) and take the total sum $\sum_{C\in\mathcal{C}^{\O}_m}\wt(C)$, which is the exact same expression as RHS of Lemma~\ref{lem:epsilon formula} so it equals $\varepsilon_m^{\O}$ as desired.
\end{proof}

From the description of $\varepsilon_m^{\O}$ using coin-configurations (Definition~\ref{def:coin}), Lemma~\ref{lem:reduction} becomes immediate: if a vertex $v$ of $\O$ has isomorphic left and right subtrees, we can pair up coin-configurations with nontrivial coin assignments to descendents of $v$ by switching the left and right subtrees of $v$ and each pair of coin-configurations have the same weight.
\subsection{Proof of the Main Theorem}\label{subsec:proof}

Now we are ready to prove Theorem \ref{thm:main}.

\begin{proof}[Proof of Theorem \ref{thm:main}]
    We write
    \[C^b_n=\sum_{\mathcal{O}\in\mathcal{U}_n}\lvert \mathcal{O}\rvert\cdot r_b(\mathcal{O}; 0).\]
    From Lemma \ref{lem:orbit sizes}, $\lvert \mathcal{O}\rvert$ is always a power of two with exponent at least $s=s_2(n+1)-1$. We also know that $r_b(\mathcal{O}; 0)$ is always an integer and by definition, $r_b(\mathcal{O}; 0)$ is odd if and only if $\varepsilon^{\mathcal{O}}_0=1$. Then
    \[C^b_n\equiv \Big(\sum_{\mathcal{O}\in\mathcal{U}_n^{\text{min}}}\varepsilon^{\O}_0\Big)2^s\pmod{2^{s+1}},\]
    so it is equivalent to show that $\sum_{\mathcal{O}\in\mathcal{U}_n^{\text{min}}}\varepsilon^{\O}_0$ is odd.

    We use Theorem \ref{thm:orbit structure} to construct all minimal orbits of trees on $n$ vertices. By Corollary \ref{lem:reduction}, if we replace each complete binary tree of depth greater than $0$ by a single vertex, we get a new orbit of binary trees (on fewer vertices) which has the same sequence $\varepsilon^{\mathcal{O}}_m$, only reduced by a power of $\varepsilon_0$. This operation will be referred to as ``reduction.'' In Figure \ref{fig:oddcase}, have reduced the tree from Figure \ref{fig:minorbit} and obtained a new tree. Note that we have preserved the colors of the vertices, as we will need to refer to them later.
    
    \begin{figure}[h!]
    \centering
    \begin{tikzpicture}[
    level distance=1cm,
    level 1/.style={sibling distance=3cm},
    level 2/.style={sibling distance=2cm},
    level 3/.style={sibling distance=1.5cm},
    level 4/.style={sibling distance=0.75cm},
    level 5/.style={sibling distance=0.375cm},
    level 6/.style={sibling distance=0.1875cm}]
      \node[black] {}
        child {node[black] {}
          child {node[white] {}
            child {node[white] {}
            child {node[white] {}
            child {node[white] {}
            child {node[white] {}}
            child {node[white] {}}}
            child {node[white] {}
            child {node[white] {}}
            child {node[white] {}}}
            }
            child {node[white] {}
            child {node[white] {}
            child {node[white] {}}
            child {node[white] {}}}
            child {node[white] {}
            child {node[white] {}}
            child {node[white] {}}}
            }
            }
            child {node[white] {}
            child {node[white] {}
            child {node[white] {}
            child {node[white] {}}
            child {node[white] {}}}
            child {node[white] {}
            child {node[white] {}}
            child {node[white] {}}}
            }
            child {node[white] {}
            child {node[white] {}
            child {node[white] {}}
            child {node[white] {}}}
            child {node[white] {}
            child {node[white] {}}
            child {node[white] {}}}
            }
            }
            }
          child {node[white] {}
          child {node[white] {}}
          child {node[white] {}}}
        }
        child {node[black] {}
        child {node[black, xshift=1cm] {}
        child {node[white] {}}
        child {node[white] {}
            child {node[white] {}
            child {node[white] {}
            child {node[white] {}}
            child {node[white] {}}}
            child {node[white] {}
            child {node[white] {}}
            child {node[white] {}}}
            }
            child {node[white] {}
            child {node[white] {}
            child {node[white] {}}
            child {node[white] {}}}
            child {node[white] {}
            child {node[white] {}}
            child {node[white] {}}}
            }
            }}
        };
        
    \node[black, xshift=7cm] {}
        child {node[black] {}
          child {node[white] {}}
          child {node[white] {}
          }
        }
        child {node[black] {}
        child {node[black, xshift=1cm] {}
        child {node[white] {}}
        child {node[white] {}
            }
            }
        };
    \end{tikzpicture}
    \caption{The example tree from Figure \ref{fig:minorbit}, and its reduction on the right.}
    \label{fig:oddcase}
    \end{figure}
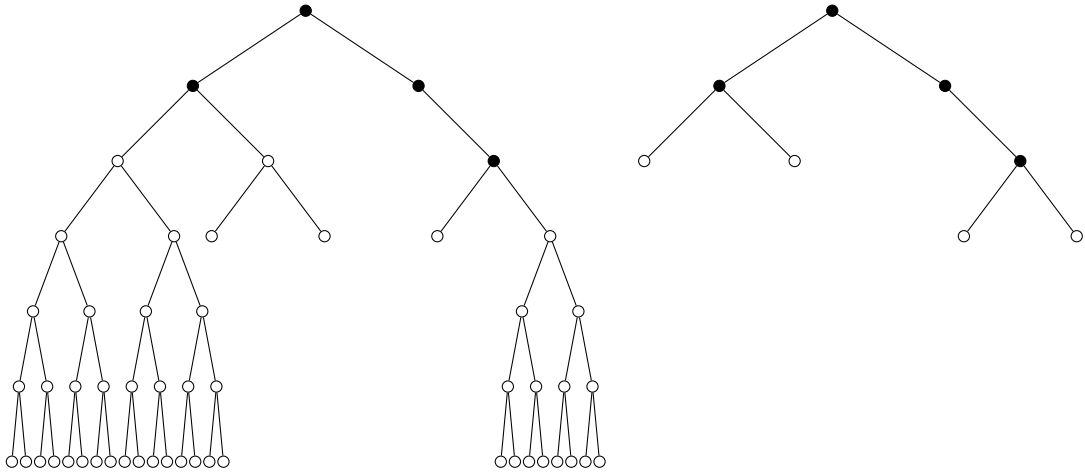
    
    We first consider the number of vertices remaining after reduction. If $n+1$ is odd, then there are $2s$ vertices remaining because there are $s$ nonempty complete binary trees being attached to $s$ vertices. If $n+1$ is even, then the number of remaining vertices is $2s+1$. In either case, let $k$ be the number of vertices of the trees of the minimal orbits after the reduction. With $\mathcal{U}_k$ as the set of orbits on $k$ vertices, let $f(\mathcal{O})$ denote the number of times an orbit $\mathcal{O}$ appears after reduction. Then we have
    \[\sum_{\mathcal{O}\in\mathcal{U}_n^{\text{min}}}\varepsilon^{\mathcal{O}}_0 = \varepsilon_0^{n-k}\sum_{\mathcal{O}\in \mathcal{U}_k}f(\mathcal{O})\varepsilon^{\mathcal{O}}_0.\]
    We want to determine the left hand side modulo $2$, so we can ignore all orbits for which $f(\mathcal{O})$ is even.
    
    In fact, we claim that those trees which remain (appear an odd number of times) are those representing minimal orbits of trees on $k$ vertices. Recall with Theorem \ref{thm:orbit structure} we assigned certain vertices as white and others as black, and we maintain this color scheme. After the reduction, we always have $s$ or $s+1$ white leaves attached to $s$ black vertices. See Table \ref{tab:reducedtrees} for an example, with all minimal orbits of trees on $14$ vertices shown with their reductions. In Table \ref{tab:reducedcount}, the counts of the resulting trees on $6$ vertices are recorded, and we can see that only three of them remain.
    
    \input{Sections/Other/orbitreductiontable.tex}
    
    Let $M$ be the size of the orbit of a tree on $k$ vertices obtained by reduction. In the case where $k=2s$, we claim that the number of times a given orbit $\O$ appears is given by $\frac{s!M}{2^s}$. We can prove this by counting the number of trees on $n$ vertices to which this corresponds: we have $s!$ ways of placing complete binary trees on the $s$ white vertices and $M$ different ways to determine the underlying structure, which will count every corresponding tree on $n$ vertices $2^s$ times because each such tree has $2^s$ symmetries.
    
    Now we determine when this quantity is odd. We have
    \[\xi_2\left(\frac{s!M}{2^s}\right)=\xi_2(s!)+\xi_2(M)-\xi_2(2^s)=s-s_2(s)+\xi_2(M)-s=\xi_2(M)-s_2(s).\]
    This is odd if and only if $\xi_2(M)=s_2(s)$. Since $s_2(s)=s_2(2s+1)-1=s_2(k)-1$, this occurs if and only if we have a minimal orbit.
    
    Similarly, when $k=2s+1$, the number of times that a tree appears is given by $\frac{(s+1)!M}{2^s}$, which is odd only when we have a minimal orbit for the same reason.
    
    Up to now, we have shown that if $f(\mathcal{O})\geq 1$, i.e. an orbit of trees on $k$ vertices actually appears after the reduction operation, then $f(\mathcal{O})$ is even if $\mathcal{O}\not \in\mathcal{U}_k^{\text{min}}$. If $\mathcal{O}\in \mathcal{U}_k^{\text{min}}$, then $\mathcal{O}$ will be counted an odd number of times if we know how to assign the nodes as white/black to ``expand'' the tree back to the original size, though this still leaves out the possibility of multiple or no white/black assignments for $\O$. To finish the claim, we show that if $\mathcal{O}\in\mathcal{U}_k^{\text{min}}$, then there is a unique assignment of black and white nodes to $\O$ so that some orbits in $\mathcal{U}^{\text{min}}_n$ reduce to $\O$ with that color assignment. We know that nodes with no children after reduction must be white and those with at least one child must be black after reduction, which shows uniqueness. Using Theorem \ref{thm:orbit structure}, we may directly verify that trees in an orbit of $\mathcal{U}_k^{\text{min}}$ will always have a number of white nodes which is equal to or one more than the number of black nodes, so this coloring is always valid.
    
    With the claim, we now have
    \[\sum_{\mathcal{O}\in\mathcal{U}_n^{\text{min}}}\varepsilon^{\mathcal{O}}_0= \varepsilon_0^{n-k}\sum_{\mathcal{O}\in \mathcal{U}_k}f(\mathcal{O})\varepsilon^{\mathcal{O}}_0 \equiv\varepsilon^{n-k}_0\sum_{\mathcal{O}\in\mathcal{U}_k^{\text{min}}}\varepsilon^{\mathcal{O}}_0\pmod{2}\]
    so we may reduce to the case of $k$ vertices. We always have $k < n$ if $n\geq 2$ because $n+1\geq 3$, meaning that minimal orbits contain complete binary trees of depth greater than $1$ and the reduction operation always removes a positive number of vertices.
    
    For our base cases, we will eventually reduce to $n=1$ or $n=2$, where the statement is directly verified. Specifically, we have
    \begin{align*}
        \sum_{\mathcal{O}\in\mathcal{U}_1^{\text{min}}}\varepsilon^{\mathcal{O}}_0 &= \varepsilon_0, \\
        \sum_{\mathcal{O}\in\mathcal{U}_2^{\text{min}}}\varepsilon^{\mathcal{O}}_0 &= \varepsilon_0(\varepsilon_0+\varepsilon_1).
    \end{align*}
    The assumptions on $b$ are precisely that $\varepsilon_0=1$ and $\varepsilon_1=0$, so this is odd, as desired.
    
    Note that the reduction to a smaller case preserves parity, so we reduce to the $n=1$ or $n=2$ case according to the parity of $n$ at the start. Thus we actually get a stronger result: if we relax the condition $4\mid \Delta b$ to $2\mid \Delta b$, then $\xi_2(C^b_n)=\xi_2(C_n)$ holds for odd $n$, while $\xi_2(C^b_n) > \xi_2(C_n)$ for even $n$ if $4\nmid \Delta b$.
\end{proof}

An \cite{an2010combinatorial} previously determined necessary and sufficient conditions, conjectured by Konvalinka \cite{konvalinka2007divisibility}, under which a polynomial weight function $b(x)=b_0+b_1x+b_2x^2+\dotsb\in \mathbb{Z}[x]$ satisfies $\xi_2(C^b_n)=\xi_2(C_n)$. Inspired by his results, we make the following conjecture.
\begin{conjecture}
\label{conj:maingeneralization}
If $b: \Z_{\geq 0}\to\Z$ satisfies 
\begin{enumerate}
    \item $b(0)$ is odd,
    \item $2^{n-s_2(n)}\mid (\Delta^nb)(x)$ for all $n\geq 2$ and $x\in \mathbb{Z}_{\geq 0}$,
    \item $b(0)\equiv b(1)\pmod{4}$,
\end{enumerate}
then $\xi_2(C_n^b)=\xi_2(C_n)=s_2(n+1)-1.$
\end{conjecture}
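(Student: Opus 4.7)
The plan is to first reduce Conjecture \ref{conj:maingeneralization} to the polynomial case via a Mahler-expansion truncation, and then refine the machinery of Section \ref{sec:thm} if needed. Since $C_n^b$ depends only on $b(0), \ldots, b(n-1)$, the polynomial
\[p(x) = \sum_{k=0}^{n-1} (\Delta^k b)(0) \binom{x}{k}\]
agrees with $b$ on $\{0, \ldots, n-1\}$, so $C_n^p = C_n^b$. Because $k - s_2(k) \geq n - s_2(n)$ whenever $k \geq n$, all three hypotheses transfer to $p$. Hence it suffices to prove the conjecture when $b$ is a polynomial, at which point one may hope to invoke the necessary and sufficient conditions of An \cite{an2010combinatorial} directly, provided they coincide with those stated here.

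For a self-contained approach, I would broaden the class $\mathcal{F}$ of Section \ref{subsec:2-adics} to
\[\mathcal{F}' = \{f : \Z_{\geq 0} \to \Z \mid 2^{n - s_2(n)} \mid (\Delta^n f)(x) \text{ for all } n \geq 0, \, x \in \Z_{\geq 0}\}.\]
Closure of $\mathcal{F}'$ under $S$, $\cdot$, and $\langle\,,\,\rangle$ would follow by expanding with Lemma \ref{lem:productrule} and using the Kummer inequality $(a+b) - s_2(a+b) \geq (a - s_2(a)) + (b - s_2(b))$; the factor of $1/2$ inside $\langle\,,\,\rangle$ is compensated by the $2 = 2^{1 - s_2(1)}$ arising from $\Delta$. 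This guarantees that $r_b(\mathcal{O}; x) \in \mathcal{F}'$ for every orbit, and in particular its integrality.

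The main conceptual difficulty is that the single-bit invariant of Definition \ref{def:epsilonO} is no longer available: $\Delta^n f / 2^{n - s_2(n)}$ need not be constant modulo $2$ in $x$, since $(n+1) - s_2(n+1) - (n - s_2(n))$ can equal $0$. I would instead track $\eta_n^f := (\Delta^n f)(0)/2^{n - s_2(n)} \bmod 2$ and derive an analog of Lemma \ref{lem:epsilon formula} for $\eta^{\mathcal{O}}$ in terms of $\eta^{\mathcal{O}_L}, \eta^{\mathcal{O}_R}, \eta^b$. The refined recursion will pick up additional carry contributions governed by $2$-adic valuations of multinomial coefficients $\binom{n}{i,j,k}$, again via Kummer's theorem. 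The analog of Lemma \ref{lem:reduction} should still hold verbatim, since its proof uses only symmetry in $i, j$ together with multiplicativity of $\eta_0 = 1$.

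The base cases are immediate from (1) and (3): $C_1^b = b(0)$ is odd, and for $n = 2$ the unique minimal orbit satisfies $r_b(\mathcal{O}; 0) = b(0)(b(0) + b(1))/2$, which is odd because $b(0)$ is odd and $b(0) + b(1) \equiv 2 \pmod{4}$. The hardest step will be the inductive core: showing $\sum_{\mathcal{O} \in \mathcal{U}_n^{\text{min}}} \eta_0^{\mathcal{O}}$ remains odd once the new recursion carries many more terms than the original. I expect the cleanest way to manage these parities is to refine the coin-configuration model of Proposition \ref{prop:coin} so that each coin records its contribution to $2$-adic carries, after which the argument via reduction to trees on $k \in \{2s, 2s+1\}$ vertices should close as in the proof of Theorem \ref{thm:main}.
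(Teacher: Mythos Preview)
First, note that the paper does not prove this statement: it is left as an open conjecture, with the remark that a combinatorial proof would be of particular interest. There is therefore no proof in the paper to compare against, and your proposal should be read as an attack on an open problem.

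Your reduction to the polynomial case via Mahler truncation is correct as far as it goes, but the step ``invoke An's conditions, provided they coincide with those stated here'' is not a proof until you actually verify that the hypotheses of the conjecture, restricted to polynomials, imply An's necessary and sufficient conditions. You have not done this.

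The self-contained approach has a concrete error. You claim that $\mathcal{F}'=\{f: 2^{n-s_2(n)}\mid \Delta^n f\}$ is closed under $\langle\,,\,\rangle$ because ``the factor of $1/2$ \ldots\ is compensated by the $2=2^{1-s_2(1)}$ arising from $\Delta$.'' But $1-s_2(1)=0$, so $2^{1-s_2(1)}=1$, not $2$; there is no compensation. Concretely, $f(x)=x$ and $g(x)=1$ both lie in $\mathcal{F}'$ (all higher differences vanish), yet $\langle f,g\rangle(x)=(2x+1)/2$ is not even integer-valued. This breaks the induction that would place $r_b(\mathcal{O};\,\cdot\,)$ in $\mathcal{F}'$, and with it the integrality and parity control on which every later step (the $\eta$-recursion, the analogue of Lemma~\ref{lem:reduction}, the coin-configuration refinement) depends. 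The conjecture's hypotheses do force $\Delta b$ to be even everywhere (combine $2\mid \Delta^2 b$ with $4\mid (\Delta b)(0)$), but not divisible by $4$: for instance $b(x)=x^2-x+1$ satisfies all three hypotheses while $(\Delta b)(1)=2$. So $b$ itself need not lie in $\mathcal{F}$, and your $\mathcal{F}'$ is too coarse to serve as a replacement. Finding a class that contains such $b$, is closed under $S$, $\cdot$, and $\langle\,,\,\rangle$, and still carries a usable mod-$2$ invariant is exactly the obstruction that keeps this a conjecture.
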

We would be particularly interested in seeing a combinatorial proof of Conjecture \ref{conj:maingeneralization} as An's overall strategy was very computational in nature.
\section{Generalizations to prime powers}\label{sec:generalization}

Catalan numbers may be generalized to $q$-Catalan numbers, which have the formula 
\[C^{(q)}_n=\frac{1}{(q-1)n+1}\binom{qn}{n}.\]
In this section, we will use $C^{(q)}_n$ to denote $q$-Catalan numbers and $C^{(q)}_n(b)$ to denote weighted $q$-Catalan numbers.

These count the number of paths from $(0, 0)$ to $(qn, 0)$ with steps $(1, q-1)$ and $(1, -1)$ which never go below the $x$-axis and the number of $q$-ary trees on $n$ vertices (each vertex has $q$ distinguishable branches). Similarly, there are the weighted $q$-Catalan numbers. For a $q$-ary tree $\mathcal{T}$, weight each vertex $v$ by $b(i)$ where $i$ is the number of non-right edges on the path from the root to $v$, and let the weight of $\mathcal{T}$ be the product of the weights of its vertices. The weighted $q$-Catalan numbers are the sums of the weights of all $q$-ary trees on $n$ vertices:
\[C_n^{(q)}(b)=\sum_{\mathcal{T}}\wt(\mathcal{T}).\]
The weighted $q$-Catalan numbers have generating function
\[\sum_{n\geq0}C_n^{(q)}(b)x^n=\cfrac{1}{1-\cfrac{b(0)x}{\left(1-\cfrac{b(1)x}{1-\left(\cfrac{b(2)x}{(1-\cdots)^{q-1}}\right)^{q-1}}\right)^{q-1}}}.\]
As before, there is a symmetry group $G_n$ on $q$-ary trees of depth $n$, generated by permuting the subtrees at any vertex. The complete $q$-ary trees are fixed under all symmetries and consist of layers of $1, q, q^2, \dotsc, q^{k-1}$ vertices.

Konvalinka \cite{konvalinka2007divisibility} generalized Postnikov and Sagan's result to divisibility of $q$-Catalan numbers when $q$ is a power of a prime. Similarly, we may extend the techniques for the case $q=2$ to prove the following result:

\begin{theorem}\label{thm:qmain}
Let $q=p^k$ be a prime power. If $b: \Z_{\geq 0}\to\Z$ satisfies 
\begin{enumerate}
    \item $b(0)\equiv 1\pmod{q}$
    \item $q^2\mid (\Delta b)(x)$ for all $x\in\mathbb{Z}_{\geq 0}$,
    \item $q^n\mid (\Delta^nb)(x)$ for all $n\geq 2$ and $x\in \mathbb{Z}_{\geq 0}$,
\end{enumerate}
then
$C_n^{(q)}(b))\equiv C_n^{(q)}\pmod{p^{\xi+k}}$
where
\[\xi = \frac{s_p((q-1)n+1)-1}{p-1}=\xi_p(C_n^{(q)}).\]
\end{theorem}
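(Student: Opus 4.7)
The plan is to transfer the framework of Section~\ref{sec:thm} from binary to $q$-ary trees, working modulo $q=p^k$ in place of modulo $2$. Let $\mathcal{F}_q$ denote the set of $f:\mathbb{Z}_{\geq 0}\to\mathbb{Z}$ with $q^n\mid(\Delta^n f)(x)$ for all $n,x\geq 0$; the hypotheses on $b$ are exactly that $b\in\mathcal{F}_q$, $b(0)\equiv 1\pmod q$, and $q^2\mid \Delta b$. For $f\in\mathcal{F}_q$ the sequence $(\Delta^n f)/q^n$ is constant modulo $q$, so one may define $\varepsilon_n^f\in\mathbb{Z}/q\mathbb{Z}$ as the common residue, and the assumptions on $b$ become $\varepsilon_0^b\equiv 1$ and $\varepsilon_1^b\equiv 0\pmod q$.

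First I would establish the $q$-ary analog of Lemma~\ref{lem:F closure}: $\mathcal{F}_q$ is closed under shift, pointwise multiplication, and the symmetrization $\langle f_1,\dotsc,f_q\rangle$ arising from averaging the $q$ children at a vertex under the $S_q$-action. The first two are formal from the multinomial product rule. The averaging requires dividing by $q!$, and this is where $q=p^k$ being a prime power is essential: using $\xi_p(q!)=(q-1)/(p-1)$ together with Kummer's theorem on multinomial coefficients, one verifies that the averaged function still lies in $\mathcal{F}_q$. Consequently the average weight function $r_b(\mathcal{O};x)$ associated to an orbit of $q$-ary trees lies in $\mathcal{F}_q$ by induction, and $\varepsilon_m^\mathcal{O}\in\mathbb{Z}/q\mathbb{Z}$ is well defined. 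A $q$-ary version of Lemma~\ref{lem:epsilon formula} then expresses $\varepsilon_m^\mathcal{O}\pmod q$ as a multinomial-weighted sum of products of $\varepsilon^b$ and the $\varepsilon^{\mathcal{O}_i}$ for the $q$ children, from which a direct generalization of Lemma~\ref{lem:reduction} follows: replacing a complete $q$-ary subtree of depth $k$ (with $(q^k-1)/(q-1)$ vertices) by a single vertex multiplies $\varepsilon_m^\mathcal{O}$ by a suitable power of $\varepsilon_0$.

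Next I would invoke Konvalinka's $q$-ary structure theorem for minimal orbits~\cite{konvalinka2007divisibility}, the analog of Theorem~\ref{thm:orbit structure}, together with the structural fact that every orbit of $q$-ary trees has $p$-adic size either equal to $p^\xi$ (for minimal orbits) or at least $p^{\xi+k}$. Non-minimal orbits then contribute multiples of $p^{\xi+k}$ to both $C_n^{(q)}(b)$ and $C_n^{(q)}$, so
\[C_n^{(q)}(b)\equiv p^\xi\sum_{\mathcal{O}\in\mathcal{U}_n^{(q),\min}}\varepsilon_0^\mathcal{O}\pmod{p^{\xi+k}},\]
and the analogous identity for $b\equiv 1$ reduces the theorem to comparing these sums modulo $q$. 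Running the reduction and counting argument from Section~\ref{subsec:proof}, collapsing the attached complete $q$-ary subtrees produces a sum indexed by minimal orbits on fewer vertices, and a $q$-ary analog of the calculation $\xi_2(s!M/2^s)=\xi_2(M)-s_2(s)$ shows that only minimal orbits on the reduced vertex count contribute non-trivially modulo $q$. This yields an inductive relation terminating at small base cases, where $\varepsilon_0^b\equiv 1$ and $\varepsilon_1^b\equiv 0$ directly give the desired residue.

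The principal obstacle is the $q$-ary version of Lemma~\ref{lem:epsilon formula}: the $S_q$-symmetrization generates $q!$ terms in the multinomial expansion whose $p$-adic valuations must be tracked carefully using Kummer's and Lucas's theorems for multinomial coefficients. The extra factor of $p^k=q$ in the modulus $p^{\xi+k}$, beyond the $p^\xi$ coming from minimal orbit sizes, is what necessitates the stronger hypothesis $q^2\mid \Delta b$ rather than just $q\mid \Delta b$, and it must be carried through the counting argument consistently.
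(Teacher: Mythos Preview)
Your overall strategy matches the paper's: set up $\mathcal{F}_q$ and the $\varepsilon$-map, prove closure and a recursive $\varepsilon$-formula, establish the reduction lemma for complete $q$-ary subtrees, then run the minimal-orbit reduction down to small base cases. Two points in your proposal, however, are off.

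First, the symmetrization does \emph{not} require dividing by $q!$. The recursive average-weight formula uses
\[
\langle f_1,\dotsc,f_q\rangle \;=\; \frac{1}{q}\sum_{i=1}^{q} f_i(x+1)\prod_{j\neq i} f_j(x),
\]
a sum of $q$ terms divided by $q$. Although $S_q$ acts on the $q$ child slots, only one slot is distinguished by the weight (the ``rightmost''), so after collapsing the $(q-1)!$ identical arrangements of the undistinguished slots the division is by $q$, not $q!$. This makes the closure lemma immediate (write $\langle f_1,\dotsc,f_q\rangle=\frac{1}{q}\sum_i(f_i+\Delta f_i)\prod_{j\neq i}f_j$ and use the product rule), and the $\varepsilon$-recursion becomes the clean $(q+1)$-term formula
\[
\varepsilon^{\mathcal{O}}_{m}\equiv\sum_{i_1+\dotsb+i_q+k=m}\binom{m}{i_1,\dotsc,i_q,k}\varepsilon_k\Bigl(\varepsilon^{\mathcal{O}_1}_{i_1}\!\cdots\varepsilon^{\mathcal{O}_q}_{i_q}+\sum_{\ell}\varepsilon^{\mathcal{O}_1}_{i_1}\!\cdots\varepsilon^{\mathcal{O}_\ell}_{i_\ell+1}\!\cdots\varepsilon^{\mathcal{O}_q}_{i_q}\Bigr)\pmod q.
\]
No appeal to $\xi_p(q!)$ or Kummer for multinomials in $q!$ is needed here; the ``principal obstacle'' you flag largely evaporates.

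Second, your ``structural fact'' that every non-minimal orbit has $p$-adic size at least $p^{\xi+k}$ is false for $k>1$. Take $q=4$ (so $p=2$, $k=2$) and $n=4$: then $\xi=2$, but the orbit of the tree ``root $\to$ one child $\to$ two leaf children'' has size $4\cdot\binom{4}{2}=24$, whose $2$-adic valuation is $3$, strictly between $\xi=2$ and $\xi+k=4$. So you cannot simply discard non-minimal orbits on size grounds. What actually saves the argument is that $\xi_p(|\mathcal{O}|)\geq\xi$ for \emph{all} orbits together with $r_b(\mathcal{O};0)\equiv\varepsilon_0^{\mathcal{O}}\pmod q$, so each orbit's contribution to $C_n^{(q)}(b)-C_n^{(q)}$ is $|\mathcal{O}|\bigl(r_b(\mathcal{O};0)-1\bigr)$ with the second factor controlled mod $q$; the reduction argument must then account for these intermediate orbits rather than dismiss them outright. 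The paper's sketch is admittedly terse on this point, but it does not assert the dichotomy you claim.
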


The proof is similar so we will show the main claims and only give brief details of the proofs. Define $\mathcal{F}$ as the set of functions satisfying $q^n\mid \Delta^n f$ for all $n$, and the map $f\mapsto (\varepsilon_0^f, \varepsilon_1^f, \dotsc)\in (\mathbb{Z}/q\mathbb{Z})^{\mathbb{N}}$ so that $\Delta^n f\equiv \varepsilon^f_nq^n\pmod{q^{n+1}}$.

\begin{lemma}
The set $\mathcal{F}$ is closed under the following operations:
\begin{align*}
    &f\mapsto Sf, \quad (f, g)\mapsto f\cdot g, \quad \text{ and } \\
    &(f_1,\dotsc, f_q)\mapsto \langle f_1, \dotsc, f_q\rangle = \frac{1}{q}\sum_{i=1}^{q}f_i(x+1)\prod_{j\neq i}f_j(x).
\end{align*}
\end{lemma}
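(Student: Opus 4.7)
The plan is to mirror the strategy used in Lemma~\ref{lem:F closure} for $q=2$, handling each operation in turn. Closure of $\mathcal{F}$ under the shift $S$ is immediate: since $\Delta$ and $S$ commute and the hypothesis $q^n\mid (\Delta^n f)(x)$ is uniform in $x$, the same divisibility holds for $Sf$.

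For closure under multiplication, I will invoke the multivariate product rule (Lemma~\ref{lem:productrule}): every term in the expansion of $\Delta^n(fg)$ takes the form $\binom{n}{k}\Delta^{n-k}(S^kf)\,\Delta^k g$, which is divisible by $q^{n-k}\cdot q^k=q^n$. The same reasoning iterates to any finite product.

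The main content is closure under $\langle\,\cdot\,\rangle$. The key step will be establishing the algebraic identity
\[\langle f_1, \dotsc, f_q\rangle = \prod_{i=1}^{q}f_i + \frac{1}{q}\sum_{i=1}^{q}\Delta f_i\prod_{j\neq i}f_j,\]
obtained by substituting $f_i(x+1)=f_i(x)+(\Delta f_i)(x)$ into the defining sum and peeling off the $q$ copies of the pure product $\prod_j f_j$. Since $q\mid \Delta f_i$ when $f_i\in \mathcal{F}$, the right-hand side is visibly integer-valued. To upgrade this to membership in $\mathcal{F}$, I will observe that $\Delta f_i/q$ itself lies in $\mathcal{F}$: indeed $\Delta^n(\Delta f_i/q)=\Delta^{n+1}f_i/q$, whose numerator is divisible by $q^{n+1}$ by hypothesis, leaving $q^n$. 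Then the multiplicative closure already proved applies to each summand $(\Delta f_i/q)\prod_{j\neq i}f_j$, and additive closure of $\mathcal{F}$ (immediate from linearity of $\Delta$) finishes the argument.

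The only real obstacle is correctly deriving the displayed identity; once in hand, the rest is a routine combination of the sum and product closures. This identity is the direct analogue of the decomposition $\langle f,g\rangle=fg+(\Delta f\cdot g+f\cdot \Delta g)/2$ used in the $q=2$ proof, and it reduces the apparently delicate division by $q$ in the definition of $\langle\,\cdot\,\rangle$ to the already-understood divisibility $q\mid \Delta f_i$.
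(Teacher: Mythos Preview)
Your proof is correct and follows essentially the same approach as the paper: both rewrite $\langle f_1,\dotsc,f_q\rangle$ via $f_i(x+1)=f_i+\Delta f_i$ to obtain $\prod_i f_i+\frac{1}{q}\sum_i \Delta f_i\prod_{j\neq i}f_j$, then reduce to the already-established closure under products. Your packaging of the last step via the observation that $\Delta f_i/q\in\mathcal{F}$ is slightly more explicit than the paper's terse ``expanding, and using the product rule,'' but the content is identical.
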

\begin{proof}
    Closure under $S$ is clear, and closure under products follows from Lemma \ref{lem:productrule} (the product rule). Closure under $\langle \dotsb\rangle$ follows by writing
    \[\langle f_1, \dotsc, f_q\rangle =\frac{1}{q}\sum_{i=1}^{q}(f_i+\Delta f_i)\prod_{j\neq i}f_j,\]
    expanding, and using the product rule.
\end{proof}
We define average weight functions $r_b(\mathcal{O}; x)$ for orbits as before, and these weight functions are in $\mathcal{F}$. Now we give the analog of Lemma \ref{lem:epsilon formula}.

\begin{lemma}
\label{lem:qepsilonformula}
Let $\mathcal{O}$ be an orbit of $q$-ary trees and let the orbits of the subtrees of the root vertex be $\O_1, \dotsc, \O_q$. Then
\[\varepsilon^{\mathcal{O}}_{m} \equiv\sum_{i_1+\dotsb+i_q+k=m}\binom{m}{i_1, \dotsc, i_q, k}\varepsilon_{k}(\varepsilon^{\mathcal{O}_1}_{i_1}\dotsm\varepsilon^{\mathcal{O}_q}_{i_q}+\varepsilon^{\mathcal{O}_1}_{i_1+1}\dotsm\varepsilon^{\mathcal{O}_q}_{i_q}+\dotsb+\varepsilon^{\mathcal{O}_1}_{i_1}\dotsm\varepsilon^{\mathcal{O}_q}_{i_q+1})\pmod{q}.\]
(The expression inside the parentheses contains $q+1$ terms, one term with indices $i_1, \dotsc, i_q$ while the other $q$ have exactly one index incremented.)
\end{lemma}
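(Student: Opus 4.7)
The plan is to mirror the proof of Lemma \ref{lem:epsilon formula}, adjusted for the $q$-ary setting. The starting point is the recursive formula
\[r_b(\O; x) = b(x) \cdot \langle r_b(\O_1; x), \ldots, r_b(\O_q; x)\rangle,\]
combined with the identity
\[\langle f_1, \ldots, f_q\rangle = \prod_{i=1}^q f_i + \frac{1}{q}\sum_{i=1}^q \Delta(f_i) \prod_{j\neq i} f_j,\]
obtained by writing $f_i(x+1) = f_i(x) + \Delta f_i(x)$ in the definition of $\langle \cdot\rangle$ (this is exactly the decomposition used in the proof of closure under $\langle\cdot\rangle$). Hence $r_b(\O; x)$ splits as a single product $b\cdot \prod_i f_i$ plus $q$ correction terms, each of the form $\frac{1}{q}\, b\cdot\Delta f_i \cdot \prod_{j\neq i}f_j$.

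Next I would apply $\Delta^m$ to each piece using the multi-function product rule of Lemma \ref{lem:productrule}. For the main product, the expansion runs over tuples $(k, i_1, \ldots, i_q)$ with $k+i_1+\cdots+i_q = m$, contributing
\[\binom{m}{k, i_1, \ldots, i_q}\,\Delta^k(S^{i_1+\cdots+i_q}b)\prod_{l=1}^q \Delta^{i_l}(S^{i_{l+1}+\cdots+i_q}f_l).\]
Each factor lies in $\mathcal{F}$ with the appropriate divisibility, so the summand is divisible by $q^m$. Because $\Delta^n g$ is constant modulo $q^{n+1}$ whenever $g\in\mathcal{F}$, the shift operators $S^{\cdot}$ are invisible at the residue level. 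Hence, after dividing by $q^m$, the whole expansion reduces modulo $q$ to
\[\sum_{k+i_1+\cdots+i_q = m}\binom{m}{k, i_1, \ldots, i_q}\varepsilon_k\varepsilon^{\O_1}_{i_1}\cdots\varepsilon^{\O_q}_{i_q},\]
which is the first of the $q+1$ terms in the lemma.

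For the $i$-th correction term, the same expansion applies, but the factor $\Delta f_i$ becomes $\Delta^{l+1}f_i$ in the product rule (where $l$ is its exponent), gaining an extra power of $q$ in divisibility. Thus each summand is divisible by $q^{m+1}$, which cancels the prefactor $\frac{1}{q}$ and leaves something divisible by $q^m$. Relabeling $l\mapsto i_i$ so that only $i_i$ is incremented by one produces exactly $\varepsilon_k\,\varepsilon^{\O_i}_{i_i+1}\prod_{j\neq i}\varepsilon^{\O_j}_{i_j}$. Summing over $i=1,\ldots,q$ and combining with the main term yields the claimed identity. Empty subtree orbits (where $r_b \equiv 1$, $\varepsilon_0 = 1$, $\varepsilon_k = 0$ for $k\geq 1$) are handled exactly as in the $q=2$ case.

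The main obstacle is bookkeeping rather than ideas: tracking which summands carry which power of $q$, checking that the $\frac{1}{q}$ in front of the correction terms is correctly absorbed, and verifying that the shifts $S^{\cdot}$ really do drop out at the residue level (which relies on the constancy of $\Delta^n g \bmod q^{n+1}$ for $g\in\mathcal{F}$). Conceptually nothing new happens beyond the $q=2$ case, but the $q$-fold symmetry and the extra indices demand care.
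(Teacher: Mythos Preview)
Your proposal is correct and follows essentially the same approach as the paper: start from the recursion $r_b(\O)=b\cdot\langle r_b(\O_1),\dotsc,r_b(\O_q)\rangle$, rewrite $\langle f_1,\dotsc,f_q\rangle$ as $\prod_i f_i + \tfrac{1}{q}\sum_i \Delta f_i\prod_{j\neq i}f_j$ (which is just a regrouping of the paper's expression $\tfrac{1}{q}\sum_i (f_i+\Delta f_i)\prod_{j\neq i}f_j$), and expand with the product rule. The paper's own proof is a one-line sketch, whereas you have spelled out the divisibility bookkeeping and the role of the shifts in detail; but the route is the same.
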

\begin{proof}
We use the recursive formula
\[r_b(\mathcal{O})=b\cdot \langle r_b(\O_1), \dotsc, r_b(\O_q)\rangle.\]
By expanding $\langle f_1, \dotsc, f_q\rangle =\frac{1}{q}\sum_{i=1}^{q}(f_i+\Delta f_i)\prod_{j\neq i}f_j$, we arrive at the given formula.
\end{proof}
Before stating the analog of Lemma \ref{lem:reduction}, we first have the following proposition:

\begin{proposition}
\label{prop:binomialdivisibility}
Let $q$ be a positive integer and $i_1, \dotsc, i_q$ be nonnegative integers, not all zero. Suppose that the distinct elements of the set $\{i_1, \dotsc, i_q\}$ are $j_1, \dotsc, j_t$ and these elements appear with multiplicity $k_1, \dotsc, k_t$. Then $q$ divides
\[\binom{i_1+\dotsc +i_q}{i_1, \dotsc, i_q}\binom{q}{k_1, \dotsc, k_t}.\]
\end{proposition}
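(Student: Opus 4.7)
The plan is to realize the product as the cardinality of a set on which the cyclic group $C_q=\mathbb{Z}/q\mathbb{Z}$ acts freely, so that $q$ divides the count by orbit-counting. Set $n=i_1+\dotsb+i_q$, and let $\mathcal{S}$ be the set of functions $f\colon\{1,\dotsc,n\}\to\{1,\dotsc,q\}$ whose multiset of fiber sizes $\{|f^{-1}(1)|,\dotsc,|f^{-1}(q)|\}$ equals $\{j_1^{k_1},\dotsc,j_t^{k_t}\}$.

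First I would count $\mathcal{S}$ in two stages: choose one of the $\binom{q}{k_1,\dotsc,k_t}$ ways to assign a fiber size to each element of $\{1,\dotsc,q\}$ (i.e.\ pick which $k_\alpha$ of the $q$ indices receive size $j_\alpha$), then distribute the $n$ labels among these now-labeled fibers of prescribed sizes in $\binom{n}{i_1,\dotsc,i_q}$ ways. Since the multinomial coefficient depends only on the multiset of its arguments, this gives
\[|\mathcal{S}|=\binom{q}{k_1,\dotsc,k_t}\binom{i_1+\dotsb+i_q}{i_1,\dotsc,i_q}.\]

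Next, I would let $C_q$ act on $\{1,\dotsc,q\}$ by cyclic rotation and pass to the induced action $f\mapsto g\circ f$ on functions $\{1,\dotsc,n\}\to\{1,\dotsc,q\}$. This permutes the fibers of $f$ without changing their individual sizes, so it restricts to an action on $\mathcal{S}$.

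Finally, I would check freeness. If $g\cdot f=f$ for some $g\in C_q$, then $g$ fixes $f(x)$ for every $x\in\{1,\dotsc,n\}$; but the rotation action of $C_q$ on $\{1,\dotsc,q\}$ is regular, so only the identity fixes any element. The hypothesis that the $i_\ell$ are not all zero forces $n\geq 1$, so at least one value $f(x)$ exists and we conclude $g=e$. Hence every orbit has size $q$, giving $q\mid|\mathcal{S}|$ as desired. The only step of substance is the combinatorial identity for $|\mathcal{S}|$; once that is in hand the free-action argument is immediate, so I do not anticipate any real obstacle.
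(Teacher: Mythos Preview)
Your proof is correct and is essentially the same as the paper's: both interpret the product as counting assignments of $n$ distinguishable objects into $q$ distinguishable boxes with fiber-size multiset $\{j_1^{k_1},\dotsc,j_t^{k_t}\}$, and both observe that the cyclic group $C_q$ acts freely on this set via rotation of the boxes since $n\geq 1$. Your write-up is simply a more formal version of the paper's one-paragraph argument.
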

\begin{proof}
    The binomial expression given counts the number of ways to assign $i_1+\dotsb+i_q$ distinguishable people into $q$ distinguishable rooms so that the distribution of the number of people in the rooms is a permutation of $i_1, \dotsc, i_q$. Since there is at least one person, we can group valid assignments with cyclic shifts (move each person down one room), so the number of assignments it a multiple of $q$.
\end{proof}

\begin{lemma}
\label{lem:qreduction}
    Let $\mathcal{O}$ be an orbit of $q$-trees on $n$ vertices, and suppose that for some vertex $v$, the subtree with root $v$ is a complete $q$-ary tree of depth $k$. Let $\mathcal{O}'$ be the same orbit of trees with the subtree at $v$ replaced by a single vertex at $v$. Then \[\varepsilon_m^{\mathcal{O}}=\varepsilon_0^{\frac{q^k-1}{q-1}-1}\varepsilon_m^{\mathcal{O}'}.\]
\end{lemma}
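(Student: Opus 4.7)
The plan is to mimic the proof of Lemma~\ref{lem:reduction} with two nested inductions. First, if the distinguished vertex $v$ is not the root of $\mathcal{O}$, I apply Lemma~\ref{lem:qepsilonformula} to expand $\varepsilon_m^{\mathcal{O}}$ through the $q$ subtrees at the root; exactly one of them, say $\mathcal{O}_j$, contains $v$, so the inductive hypothesis applies to $\mathcal{O}_j$ and the factor $\varepsilon_0^{(q^k-1)/(q-1)-1}$ pulls out of every summand. This reduces the claim to the base case where $\mathcal{O}$ is itself a complete $q$-ary tree of depth $k$.

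For this base case I induct on $k$. The case $k=1$ is vacuous since then $\mathcal{O}=\mathcal{O}'$ is a single vertex and the exponent is $0$. For $k\ge 2$, the $q$ children of the root are isomorphic complete $q$-ary trees of depth $k-1$, so every $\varepsilon_i^{\mathcal{O}_j}$ collapses to a common value $\alpha_i$, and Lemma~\ref{lem:qepsilonformula} gives
\[
\varepsilon_m^{\mathcal{O}} \equiv \sum_{i_1+\cdots+i_q+\ell=m}\binom{m}{i_1,\ldots,i_q,\ell}\varepsilon_\ell\Bigl[\alpha_{i_1}\cdots\alpha_{i_q}+\sum_{r=1}^{q}\alpha_{i_r+1}\prod_{j\neq r}\alpha_{i_j}\Bigr]\pmod q,
\]
where I rename the inner summation index (called $k$ in the lemma) to $\ell$ to avoid collision with the depth.

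The crux is to show that modulo $q$ the only surviving contribution is the term $(i_1,\ldots,i_q,\ell)=(0,\ldots,0,m)$ from the ``main'' piece $\alpha_{i_1}\cdots\alpha_{i_q}$, which yields $\varepsilon_m\alpha_0^q$. The ``incremented'' piece collapses because its outer sum is symmetric in $(i_1,\ldots,i_q)$: relabeling dummy indices shows that the contribution of the $r$-th summand is independent of $r$, so summing over $r=1,\ldots,q$ gives a multiple of $q$. For the main piece, I group tuples by their underlying multiset $\{j_1^{k_1},\ldots,j_t^{k_t}\}$; this produces a factor $\binom{q}{k_1,\ldots,k_t}$, and Proposition~\ref{prop:binomialdivisibility} then forces divisibility by $q$ unless every $i_j=0$. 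Invoking the induction hypothesis at depth $k-1$ yields $\alpha_0=\varepsilon_0^{(q^{k-1}-1)/(q-1)}$, whence $\alpha_0^q=\varepsilon_0^{(q^k-q)/(q-1)}=\varepsilon_0^{(q^k-1)/(q-1)-1}$, matching the required exponent.

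The main obstacle is arranging both cancellations simultaneously, but both are clean: the incremented terms vanish via a permutation-symmetry shuffle of the outer sum, and the non-diagonal main terms vanish via Proposition~\ref{prop:binomialdivisibility}, which is set up precisely for this grouping. No additional arithmetic ingredients should be needed beyond the tools already developed in this section.
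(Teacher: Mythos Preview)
Your proof is correct and follows essentially the same route as the paper: reduce via Lemma~\ref{lem:qepsilonformula} to the case where $\mathcal{O}$ is a complete $q$-ary tree, then induct on the depth, killing the incremented terms by the $S_q$-symmetry of the summation (each of the $q$ summands contributes equally, so the total is a multiple of $q$) and killing the off-diagonal main terms via Proposition~\ref{prop:binomialdivisibility}. Your explicit justification of the symmetry cancellation and your exponent bookkeeping $\alpha_0^q=\varepsilon_0^{(q^k-q)/(q-1)}=\varepsilon_0^{(q^k-1)/(q-1)-1}$ are both sound and slightly more detailed than what the paper writes, but the argument is the same.
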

\begin{proof}
    Like the proof of Lemma \ref{lem:reduction}, it suffices to analyze the case where $\O$ is a complete $q$-ary tree and $v$ is its root. We expand with Lemma \ref{lem:qepsilonformula}. Since the orbits $\mathcal{O}_1, \dotsc, \mathcal{O}_q$ are the same, we have
    \[\sum_{i_1+\dotsb+i_q+k=m}\binom{m}{i_1, \dotsc, i_q, k}\varepsilon_{k}(\varepsilon^{\mathcal{O}_1}_{i_1+1}\dotsm\varepsilon^{\mathcal{O}_q}_{i_q}+\dotsb+\varepsilon^{\mathcal{O}_1}_{i_1}\dotsm\varepsilon^{\mathcal{O}_q}_{i_q+1})=0\pmod{q}.\]
    The remaining term is\[\varepsilon^{\mathcal{O}}_{m} \equiv\sum_{i_1+\dotsb+i_q+k=m}\binom{m}{i_1, \dotsc, i_q, k}\varepsilon_{k}\varepsilon^{\mathcal{O}_1}_{i_1}\dotsm\varepsilon^{\mathcal{O}_1}_{i_q}\pmod{q}.\]
    (We have replaced all $\O_i$ with $\O_1$.) Given a multiset of indices $\{i_1, \dotsc, i_q\}$, with distinct elements $j_1, \dotsc, j_t$ that appear $k_1, \dotsc, k_t$ times, the number of times the term $\varepsilon^{\mathcal{O}_1}_{i_1}\dotsm \varepsilon^{\O_1}_{i_q}$ appears in the above sum is $\binom{q}{k_1,\dotsc, k_t}$, and it is multiplied by $\binom{m}{i_1, \dotsc, i_q, k}$. Since $\binom{i_1+\dotsb +i_q}{i_1, \dotsc, i_q}$ divides $\binom{m}{i_1, \dotsc, i_q, k}$, by Proposition \ref{prop:binomialdivisibility}, these terms cancel out unless $i_1=\dotsb=i_q=0$. Hence we are left with
    \[\varepsilon^{\O}_m=\varepsilon_m(\varepsilon^{\O_1}_0)^q\]
    and the statement of the lemma follows from there.
\end{proof}

The proof of Theorem \ref{thm:qmain} is quite similar to the proof of Theorem \ref{thm:main} from here. We construct all minimal orbits on $n$ vertices according to the analog of Theorem \ref{thm:orbit structure}, and each one may be reduced using Lemma \ref{lem:qreduction} to get orbits of trees on a smaller number of vertices. This will reduce to the case of trees on a smaller number of vertices until we have reduced to $n\leq q$. In these cases, we directly verify that $\varepsilon_0=1$ and $\varepsilon_1=0$ is sufficient to have the sum of $\varepsilon^{\O}_0$ equal to $1\pmod{q}$.
\section{Periodicity of weighted Catalan numbers}\label{sec:period}
In this section we examine the periodicity of the weighted Catalan numbers modulo a positive integer $m$. In Section~\ref{subsec:period}, we prove the main result which determines if $\{C^b_n\pmod{m}\}$ is periodic. In Section~\ref{subsec:morseperiodicity}, we analyze the specific case of Morse link numbers previously studied by Postnikov and compute periods modulo $7, 11$, and $3^r$. Finally, in Section~\ref{subsec:valuationconjectures} we suggest some conjectures on Morse link numbers to be further explored.

\subsection{Determining when periodicity exists}\label{subsec:period}

We begin with the following lemma on finite continued fractions of the form in Proposition \ref{prop:genfunc}.

\begin{lemma}
\label{lem:cfracexpansion}
For integers $u, v$, let $S_k(u, v)$ denote the set of all sequences $u\leq i_1 < i_2 < \dotsc < i_k\leq v$ of integers such that $i_{m+1}-i_m\geq 2$ for all $m$. For a sequence $b(0), b(1), \dotsc, b(n)$, we have
\[\cfrac{1}{1-\cfrac{b(0)x}{1-\cfrac{b(1)x}{1-\cfrac{b(2)x}{1-\cfrac{\cdots}{1-b(n)x}}}}}=\frac{P(x)}{Q(x)}\]
where $P(x)$ and $Q(x)$ are defined as follows:
\begin{align*}
    P(x) &= 1+\sum_{k\geq 1}\left(\sum_{(i_1, \dotsc, i_k)\in S_k(1, n)}b(i_1)\dotsm b(i_k)\right)(-x)^k \\
    Q(x) &= 1+\sum_{k\geq 1}\left(\sum_{(i_1, \dotsc, i_k)\in S_k(0, n)}b(i_1)\dotsm b(i_k)\right)(-x)^k.
\end{align*}
\end{lemma}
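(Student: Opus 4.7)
The plan is to prove the identity by induction on $n$, exploiting the recursive structure of the continued fraction together with the combinatorial decomposition of independent sets on a path.

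For the base case $n=0$, the continued fraction reduces to $1/(1-b(0)x)$. The set $S_k(1,0)$ is empty for every $k\geq 1$, so $P(x)=1$; while $S_1(0,0)=\{(0)\}$ and $S_k(0,0)=\emptyset$ for $k\geq 2$, giving $Q(x)=1-b(0)x$. The identity holds.

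For the inductive step, denote by $F_n(b;x)$ the finite continued fraction with entries $b(0),\dotsc,b(n)$. Peeling off the top layer gives the identity
\[F_n(b;x)=\cfrac{1}{1-b(0)x\cdot F_{n-1}(Sb;x)},\]
where $(Sb)(i)=b(i+1)$. Applying the inductive hypothesis to the shifted sequence, $F_{n-1}(Sb;x)=P'(x)/Q'(x)$, where after reindexing $j_m=i_m+1$ one sees that $P'(x)$ is the independence-type sum over $S_k(2,n)$ with weights $b(j_1)\dotsm b(j_k)$, and $Q'(x)$ is the corresponding sum over $S_k(1,n)$, which is exactly $P(x)$ in the notation of the lemma. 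Substituting, I get
\[F_n(b;x)=\frac{Q'(x)}{Q'(x)-b(0)x\,P'(x)}=\frac{P(x)}{P(x)-b(0)x\,P'(x)}.\]

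It then remains to verify the polynomial identity $Q(x)=P(x)-b(0)x\,P'(x)$. This is a direct bijective argument on independent sets of the path $\{0,1,\dotsc,n\}$: split each such set according to whether $0$ belongs to it. If $0\notin\{i_1,\dotsc,i_k\}$, then $(i_1,\dotsc,i_k)\in S_k(1,n)$, accounting for the $P(x)$ term. If $0\in\{i_1,\dotsc,i_k\}$, then the spacing condition forces $1\notin\{i_1,\dotsc,i_k\}$ as well, so the remaining indices lie in $S_{k-1}(2,n)$; factoring out the contribution $(-x)b(0)$ for the index $0$ gives exactly $-b(0)x\,P'(x)$. Summing the two contributions yields $Q(x)$, completing the induction.

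The argument is essentially routine; the only point requiring care is bookkeeping the index shift when invoking the inductive hypothesis on $Sb$, so as to correctly identify the shifted $P'$ and $Q'$ with the independence sums over $S_k(2,n)$ and $S_k(1,n)$ respectively. No genuine obstacle arises beyond this.
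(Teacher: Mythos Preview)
Your proof is correct and follows essentially the same route as the paper: induction on $n$, peeling off the top layer to obtain $F_n=\dfrac{Q'}{Q'-b(0)xP'}$ with $P',Q'$ identified (via the shift) as the independence sums over $S_k(2,n)$ and $S_k(1,n)$, and then the combinatorial split of $S_k(0,n)$ according to whether $0$ appears. The only cosmetic difference is that the paper writes the step as $n\to n+1$ and does not name the shift operator explicitly.
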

\begin{proof}
    We induct on $n$. For $n=0$ the equality is $\frac{1}{1-b(0)x}=\frac{1}{1-b(0)x}$, which is clearly true. For the inductive step, assume the lemma is true for some $n$. Then
    \[\cfrac{1}{1-\cfrac{b(0)x}{1-\cfrac{b(1)x}{1-\cfrac{b(2)x}{1-\cfrac{\cdots}{1-b(n+1)x}}}}}=\frac{1}{1-b(0)x\frac{P_1(x)}{Q_1(x)}}=\frac{Q_1(x)}{Q_1(x)-b(0)xP_1(x)}\]
    where
    \begin{align*}
    P_1(x) &= 1+\sum_{k\geq 1}\left(\sum_{(i_1, \dotsc, i_k)\in S_k(2, n+1)}b(i_1)\dotsm b(i_k)\right)(-x)^k, \\
    Q_1(x) &= 1+\sum_{k\geq 1}\left(\sum_{(i_1, \dotsc, i_k)\in S_k(1, n+1)}b(i_1)\dotsm b(i_k)\right)(-x)^k
    \end{align*}
    by the inductive hypothesis. Now we have
    \[Q_1(x)-b(0)xP_1(x)=1+\sum_{k\geq 1}\left(\sum_{(i_1, \dotsc, i_k)\in S_k(0, n+1)}b(i_1)\dotsm b(i_k)\right)(-x)^k\]
    because the $P_1(x)$ term accounts for all sequences in $S_k(0, n+1)$ which have $i_1=0$ and the $Q_1(x)$ term accounts for all sequences in $S_k(0, n+1)$ which have $i_1\neq 0$. This proves the inductive step.
\end{proof}

We now have the following key theorem which describes when $C^b_n$ is eventually periodic modulo a positive integer $m$.
\begin{theorem}
\label{thm:periodicity}
Let $m$ be a positive integer. The sequence $\{C^b_n\pmod{m}\}$ is eventually periodic if and only if $m\mid b(0)\dotsm b(k)$ for some positive integer $k$.
\end{theorem}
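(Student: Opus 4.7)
A sequence is eventually periodic modulo $m$ if and only if its generating function reduces to a rational function $P(x)/Q(x)$ modulo $m$ with $Q(0)$ a unit; thus it suffices to prove that $F(x):=\sum_{n\ge 0}C^b_n x^n$ is rational modulo $m$ if and only if $m\mid b(0)\cdots b(k)$ for some $k$.

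For the sufficient direction, assume $m \mid b(0)\cdots b(k)$. Any Dyck path reaching height at least $k+1$ must contain an up-step from each of the heights $0,1,\dots,k$, so $\wt_b$ of such a path is divisible by $b(0)\cdots b(k)$ and hence vanishes modulo $m$. Therefore modulo $m$, only Dyck paths of maximum height at most $k$ contribute to $C^b_n$, and their generating function is the finite continued fraction in $b(0),\dots,b(k-1)$, which by Lemma~\ref{lem:cfracexpansion} equals a rational function $P(x)/Q(x)$ with $Q(0)=1$.

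For the necessary direction I argue by contradiction and prime-power reduction. If $m\nmid b(0)\cdots b(k)$ for every $k$, then there is a prime $p$ with $p^r\|m$ such that $s:=\sum_{i\ge 0}v_p(b(i))<r$ (necessarily a finite integer, where $v_p$ is the $p$-adic valuation); I will show $F$ is not rational modulo $p^{s+1}$, contradicting rationality modulo $m$. If $s=0$, every $b(i)$ is a unit modulo $p$, so $F$ is represented over the field $\mathbb{F}_p$ by an infinite S-continued fraction with no zero entries and is not rational in $\mathbb{F}_p(x)$ by uniqueness of the S-fraction expansion of a rational function. If $s\ge 1$, let $K$ be the largest index with $p\mid b(K)$. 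Iterating the telescoping identity derived from $F_j=1/(1-b(j)xF_{j+1})$ and its truncated analogue $F_j^{(K)}$ (with base case at $j=K$ using $F_K^{(K)}=1/(1-b(K)x)$) yields
\[F-F^{(K)}=\frac{b(0)\cdots b(K)\cdot x^{K+1}(F_{K+1}(x)-1)}{\Pi(x)}\]
for some $\Pi\in\mathbb{Z}[[x]]$ with $\Pi(0)=1$. Writing $b(0)\cdots b(K)=p^s u$ with $u$ coprime to $p$ gives $F=F^{(K)}+p^sY$ where $Y=ux^{K+1}(F_{K+1}-1)/\Pi\in\mathbb{Z}[[x]]$. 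Assuming $F\equiv P/Q\pmod{p^{s+1}}$ with $Q(0)=1$ and subtracting $F^{(K)}=P_0/Q_0$ (a rational function by Lemma~\ref{lem:cfracexpansion}), one finds $p^sY\cdot QQ_0\equiv PQ_0-P_0Q\pmod{p^{s+1}}$; the left side is coefficient-wise divisible by $p^s$, hence so is the right, and cancelling $p^s$ yields that $Y$ is rational modulo $p$. But $p\mid b(K)$ collapses the only $F_{K+1}$-dependent factor $(1-b(K)xF_{K+1})$ in $\Pi$ to $1$ modulo $p$, and a downward induction using $F_K\equiv 1\pmod p$ shows $\Pi\bmod p$ is a unit power series depending only on $b(0),\dots,b(K-1)$. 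Hence rationality of $Y\bmod p$ forces $F_{K+1}\bmod p$ to be rational, contradicting the $s=0$ analysis applied to the tail continued fraction, whose entries $b(K+1),b(K+2),\dots$ are all units modulo $p$ by the maximality of $K$.

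The chief technical obstacle is the $p$-adic bookkeeping in the necessary direction: one must telescope $F-F^{(K)}$ carefully, verify that the denominator $\Pi$ stays $p$-adically invertible throughout the iteration, and exploit the maximality of $K$ to ensure that the tail $F_{K+1}$ cleanly factors out of $\Pi$ upon reduction modulo $p$, so that the non-rational behavior of the infinite tail over $\mathbb{F}_p$ can be transferred to a contradiction about $Y$.
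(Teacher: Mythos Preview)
Your proof is correct, and the sufficient direction matches the paper's argument exactly. The necessary direction reaches the same conclusion by a genuinely different path, so a brief comparison is worthwhile.

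The paper inducts on the exponent $r$ in $m=p^r$. For $r=1$ it runs an explicit Euclidean-type iteration: starting from $F\equiv P/Q\pmod p$ it defines polynomials $U_{k+1}=b(k)^{-1}(U_k-V_k)/x$, $V_{k+1}=U_k$, checks that each has constant term $1$, and shows $\max(\deg U_k,\deg V_k)$ strictly drops every two steps, forcing a contradiction. For $r>1$ the same iteration is run until the first index $i$ with $p\mid b(i)$; one then verifies coefficientwise that $p^{\alpha}\mid U_i-V_i$ where $\alpha=v_p(b(i))$, divides through, and lands in the $p^{r-\alpha}$ case for the tail.

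Your route instead locates the \emph{last} index $K$ with $p\mid b(K)$ and telescopes $F-F^{(K)}$ in one shot, writing the difference as $p^{s}$ times a power series whose reduction mod $p$ factors as (rational)$\times(F_{K+1}-1)$. Rationality of $F$ mod $p^{s+1}$ then forces $F_{K+1}$ to be rational mod $p$, contradicting the $s=0$ case. The key point you identify---that $p\mid b(K)$ makes $F_K\equiv 1\pmod p$, so by downward induction every $F_j\bmod p$ for $j\le K$ is a \emph{finite} continued fraction and hence $\Pi\bmod p$ is a nonzero rational function---is exactly what makes the one-shot reduction work.

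What each approach buys: the paper's argument is entirely self-contained, in particular proving the $\mathbb{F}_p$ base case from scratch via degree descent; yours is more conceptual and avoids the step-by-step induction on $r$, but outsources the base case to the classical fact that an infinite Stieltjes-type continued fraction with all partial numerators nonzero over a field cannot represent a rational function. If you want your write-up to be fully self-contained, you could replace that citation by the paper's two-line degree-drop argument; otherwise the proof stands as written.
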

\begin{proof}
    Using the Dyck path interpretation of Catalan numbers, suppose that $n\mid b(0)\dotsm b(k)$ for some positive integer $k$. Then all paths which exceed height $k$ have a weight that is $0\pmod{m}$, so they can be ignored. This means we may truncate the continued fraction for the weighted Catalan numbers at $b(k)$, so by Lemma \ref{lem:cfracexpansion}, the weighted Catalan numbers have a rational generating function modulo $m$, i.e.
    \[\sum_{n\geq 0}C^b_nx^n\equiv \frac{P(x)}{Q(x)}\pmod{m}\]
    for some polynomials $P, Q$. Furthermore, the constant term of $Q(x)$ is $1$ so this implies a linear recurrence relation $C^b_n\equiv a_1C^b_{n-1}+a_2C^b_{n-2}+\dotsb+a_kC^b_{n-k}\pmod{m}$ for sufficiently large $n$. This proves eventual periodicity because there are only finitely many possibilities for $(C^b_{n-1}, \dotsc, C^b_{n-k})\pmod{m}$, so the sequence eventually repeats.
    
    In the other direction, it suffices to only consider the case where $m=p^r$ for a prime number $p$ and positive integer $r$. We prove the claim by induction on $r$. Suppose that $C^b_n=C^b_{n+k}$ for some period $k$ and sufficiently large integers $n$. Then the generating function of $C^b_n$ is congruent modulo $p^r$ to a function of the form $\frac{P(x)}{Q(x)}$ for polynomials $P, Q$ with constant term $1$, as we may take $Q(x)=1-x^k$ and choose $P$ appropriately. (The choice of $P$ has constant term $1$ because $C^b_0=1$.)
    
    In the base case $r=1$, meaning $m=p$ is a prime, assume for the sake of contradiction that no $b(i)$ is a multiple of $p$. We have
    \[\cfrac{1}{1-\cfrac{b(0)x}{1-\cfrac{b(1)x}{1-\cfrac{b(2)x}{1-\cdots}}}}=\sum_{n\geq0}C_n^bx^n=\frac{P(x)}{Q(x)}=\frac{1+xP_1(x)}{1+xQ_1(x)}\pmod{p}\]
    for some polynomials $P_1, Q_1$ as $P, Q$ both have constant term $1$. By taking the reciprocal, subtracting both sides from $1$, and dividing by $b(0)x$ (recall that we are assuming that $b(0)$ is not a multiple of $p$), we get
    \[\cfrac{1}{1-\cfrac{b(1)x}{1-\cfrac{b(2)x}{1-\cfrac{b(3)x}{1-\cdots}}}}=b(0)^{-1}\frac{P_1(x)-Q_1(x)}{1+xP_1(x)}=\frac{b(0)^{-1}(P_1(x)-Q_1(x))}{P(x)}\pmod{p}.\]
    The constant term of $b(0)^{-1}(P_1(x)-Q_1(x))$ must be $1$ because the constant term of the left hand side is $1$, so we may repeat this procedure. Define $\{U_k\}, \{V_k\}$ by $R_0=P, S_0=Q$ and
    \begin{align*}
        U_{k+1} &= b(k)^{-1}\left(\frac{U_k-V_k}{x}\right) \\
        V_{k+1} &= U_k.
    \end{align*}
    Following the above argument, $U_k$ and $V_k$ are always polynomials with constant term $1$ and
    \[\cfrac{1}{1-\cfrac{b(k)x}{1-\cfrac{b(k+1)x}{1-\cfrac{b(k+2)x}{1-\cdots}}}}=\frac{U_k(x)}{V_k(x)}\pmod{p}.\]
    If $\deg U_k < \deg V_k$, then $\deg U_{k+1} = \deg V_k-1$ and $\deg V_{k+1} = \deg U_k < \deg V_k$, so $\max(\deg U_{k+1}, \deg V_{k+1}) < \max (\deg U_k, \deg V_k)$. Otherwise, if $\deg U_k\geq \deg V_k$, then $\deg U_{k+1}\leq \deg U_k-1 < \deg V_{k+1}$. Therefore the quantity $\max(\deg U_{k+1}, \deg V_{k+1})$ is nonincreasing and cannot remain constant twice in a row, so eventually we must have $U_k=0$. This is a contradiction as the continued fraction on the left has constant term $1$.
    
    Now for $r > 1$, we proceed similarly, starting with 
    \[\cfrac{1}{1-\cfrac{b(0)x}{1-\cfrac{b(1)x}{1-\cfrac{b(2)x}{1-\cdots}}}}=\sum_{n\geq0}C_n^bx^n=\frac{P(x)}{Q(x)}\pmod{p^r}\]
    for some polynomials $P, Q$ with constant term $1$. By the same argument as before, there must be some $b(i)$ which is a multiple of $p$. Suppose that $\nu_p(b(i))=\alpha$, and let $b(i)=p^{\alpha}\ell$ with $\gcd(\ell, p)=1$. Then
     \begin{equation}
         \cfrac{p^{\alpha}\ell}{1-\cfrac{b(i+1)x}{1-\cfrac{b(i+2)x}{1-\cfrac{b(i+3)x}{1-\cdots}}}}=\frac{U_i(x)-V_i(x)}{U_i(x)}\pmod{p^r}\label{eq:p^rcase}.
     \end{equation}
     We may multiply out to obtain
     \[p^{\alpha}\ell U_i(x)=(U_i(x)-V_i(x))(1-b(i+1)x+\dotsb)\pmod{p^r}\]
     where the term on the right is $1-\frac{b(i+1)x}{1-\frac{b(i+2)x}{1-\cdots}}$. Every coefficient on the left hand side is a multiple of $p^{\alpha}$, and we may argue inductively that every coefficient of $U_i-V_i$ must be a multiple of $p^{\alpha}$ as well, as the constant term of the other factor is $1$. Therefore we may divide (\ref{eq:p^rcase}) by $p^{\alpha}\ell$ to get
     \[\cfrac{1}{1-\cfrac{b(i+1)x}{1-\cfrac{b(i+2)x}{1-\cfrac{b(i+3)x}{1-\cdots}}}}=\frac{U_{i+1}(x)}{V_{i+1}(x)}\pmod{p^{r-\alpha}}\]
     for some new polynomials $U_{i+1}, V_{i+1}$. By the inductive hypothesis, we must have $p^{r-\alpha}\mid b(i+1)b(i+2)\dotsm b(k)$ for some $k$, so that $p^r\mid b(0)b(1)\dotsm b(k)$ as desired.
\end{proof}

\subsection{Periodicity of Morse link numbers} \label{subsec:morseperiodicity}
Morse curves and links were defined by Postnikov \cite{postnikov2000morse}, who showed that if $L_n$ is the number of combinatorial types of Morse links of order $n$, then $L_n=C^b_n$ for weight function $b(x)=(2x+1)^2$.

The proof of Theorem \ref{thm:periodicity} shows how to compute the period when it exists, as we may truncate the generating function to find a linear recurrence which can be solved with known tools. We will demonstrate (Example~\ref{ex:Lnmod7} and Example~\ref{ex:Lnmod11}) by solving some conjectures by Postnikov on the periods of $\{L_n\pmod{7}\}$ and $\{L_n\pmod{11}\}$.
\begin{example}\label{ex:Lnmod7}
    Since $b(3)=7^2$ is divisible by $7$, the generating function of $\{L_n\pmod{7}\}$ is given by
    \[\cfrac{1}{1-\cfrac{x}{1-\cfrac{3^2x}{1-5^2x}}}=\frac{1-34x}{1-35x+25x^2}\equiv \frac{1+x}{1+4x^2}\pmod{7}.\]
    This is a linear recurrence $L_0=L_1=1, L_{n+2}=-4L_n$ for $n\geq 0$. The explicit solution is 
    \[L_n=\left(\frac{1-r_2}{r_1-r_2}\right)r_1^n+\left(\frac{r_1-1}{r_1-r_2}\right)r_2^n\pmod{7}\]
    where $r_1, r_2\in \mathbb{F}_{49}$ are the roots of the characteristic equation $4x^2+1=0$. These are square roots of $5$, so $r_1^{12}\equiv 5^6\equiv 1\pmod{7}$ and they have order $12$. This is the minimal period of $L_n\pmod{7}$.
\end{example}

\begin{example}\label{ex:Lnmod11}
    Since $b(5)=11^2$ is divisible by $11$, the generating function of $\{L_n\pmod{11}\}$ is
    \[\cfrac{1}{1-\cfrac{x}{1-\cfrac{3^2x}{1-\cfrac{5^2x}{1-\cfrac{7^2x}{1-9^2x}}}}}\equiv \frac{1+x+5x^2}{1+6x^2+10x^3}\pmod{11}.\]
    The denominator factors as $(1-5x)(1-3x)^2$ and the solution is
    \[L_n= 6\cdot 3^n+10\cdot n3^n+6\cdot 5^n\pmod{11}.\]
    The orders of $3$ and $5$ modulo $11$ are both $5$, while $n\pmod{11}$ has period $11$. Hence $L_n\pmod{11}$ has period $55$ and it is not hard to verify that this is the minimal period.
\end{example}

In general, under the conditions of Theorem \ref{thm:periodicity}, we can only say that $\{C^b_n\pmod{m}\}$ is eventually periodic. However, under certain circumstances we can prove pure periodicity with Lemma \ref{lem:cfracexpansion} by using the following proposition:

\begin{proposition}
\label{prop:pureperiodicity}
Let $m$ be a positive integer and $P, Q$ be polynomials such that $\deg P < \deg Q$, and the constant and leading coefficients of $Q$ are coprime to $m$. Then the sequence $\{a_n\}$ with generating function $\frac{P}{Q}$ is purely periodic modulo $m$.
\end{proposition}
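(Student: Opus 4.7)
The plan is to extract a reversible linear recurrence from the rational generating function and then use reversibility to propagate eventual periodicity (from pigeonhole) all the way back to the initial segment. Write $Q(x)=q_0+q_1x+\dotsb+q_dx^d$ where $d=\deg Q$, and let $P(x)=p_0+p_1x+\dotsb+p_{d-1}x^{d-1}$. Comparing coefficients of $x^n$ in the identity $Q(x)\sum_{n\geq 0}a_nx^n=P(x)$ gives, for every $n\geq d$, the homogeneous recurrence
\[
q_0a_n+q_1a_{n-1}+\dotsb+q_da_{n-d}\equiv 0\pmod{m}.
\]
The hypotheses on $Q$ mean $q_0$ and $q_d$ are both units modulo $m$, so this recurrence can be solved both for $a_n$ in terms of $(a_{n-d},\dotsc,a_{n-1})$ (the forward step) and for $a_{n-d}$ in terms of $(a_{n-d+1},\dotsc,a_n)$ (the reverse step).

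Next, consider the sequence of $d$-tuples $\mathbf{v}_n=(a_n,a_{n+1},\dotsc,a_{n+d-1})$ in $(\mathbb{Z}/m\mathbb{Z})^d$. By pigeonhole on the $m^d$ possible tuples, there exist nonnegative integers $N_1<N_2$ with $\mathbf{v}_{N_1}\equiv \mathbf{v}_{N_2}\pmod{m}$; set $T=N_2-N_1>0$. I claim that in fact $\mathbf{v}_0\equiv \mathbf{v}_T\pmod m$. To see this, I walk the congruence $\mathbf{v}_n\equiv \mathbf{v}_{n+T}\pmod m$ backward from $n=N_1$ to $n=0$ using the reverse recurrence: whenever $n\geq 1$, applying the reverse step at indices $n+d-1$ and $n+T+d-1$ (both $\geq d$) shows $a_{n-1}\equiv a_{n+T-1}\pmod{m}$, which together with the equality of the last $d-1$ entries gives $\mathbf{v}_{n-1}\equiv \mathbf{v}_{n+T-1}\pmod m$. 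Iterating produces $\mathbf{v}_0\equiv \mathbf{v}_T\pmod m$.

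Finally, I propagate $\mathbf{v}_0\equiv \mathbf{v}_T\pmod m$ forward using the forward recurrence: by induction on $n\geq 0$, the tuple equality $\mathbf{v}_n\equiv \mathbf{v}_{n+T}\pmod m$ together with the forward step at index $n+d$ shows $a_{n+d}\equiv a_{n+T+d}\pmod m$, hence $\mathbf{v}_{n+1}\equiv \mathbf{v}_{n+T+1}\pmod m$. Therefore $a_n\equiv a_{n+T}\pmod m$ for every $n\geq 0$, which is pure periodicity.

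The only conceptual content beyond bookkeeping is reversibility, and that is exactly where both hypotheses on $Q$ are used: invertibility of $q_0$ gives the forward direction (and thereby the existence of a well-defined recurrence), while invertibility of the leading coefficient $q_d$ is what rules out the transient behavior seen in toy examples such as $1/(1-mx)\pmod m$ and permits the walk back to $\mathbf{v}_0$. Provided both halves of the reversibility are in place, the proof is a short pigeonhole argument and I do not anticipate a serious obstacle.
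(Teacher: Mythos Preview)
Your proof is correct and follows essentially the same approach as the paper: extract the linear recurrence from $Q$, use pigeonhole on the state vectors $(a_n,\dotsc,a_{n+d-1})$ modulo $m$ to obtain eventual periodicity, and then use invertibility of the leading coefficient of $Q$ to reverse the recurrence and push periodicity back to $n=0$. The paper's argument is a terse two-sentence version of exactly this, citing the earlier pigeonhole step and noting that the recurrence ``can be extended backwards''; you have simply spelled out the backward walk in detail.
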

\begin{proof}
By reading off the coefficients of $Q$ we get a linear recurrence
\[a_{n+k}\equiv c_{1}a_{n+k-1}+\dotsb+c_ka_{n}\pmod{m}\]
which is valid for all $n$ because $\deg P < \deg Q$. As we have seen, this implies eventual periodicity. Furthermore, $c_k$ is coprime to $m$ so this linear recurrence can be extended backwards, so eventual periodicity implies pure periodicity in this case.
\end{proof}

\begin{corollary}
For any prime $p\equiv 3\pmod{4}$, the sequence $\{L_n\pmod{p}\}$ is purely periodic.
\end{corollary}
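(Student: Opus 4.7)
The plan is to apply Proposition~\ref{prop:pureperiodicity} to the generating function of $\{L_n \bmod p\}$. Since $b(j) = (2j+1)^2$ satisfies $b(j)\equiv 0\pmod{p}$ exactly when $j\equiv (p-1)/2\pmod{p}$, I would first set $j_0=(p-1)/2$ and, following the first half of the proof of Theorem~\ref{thm:periodicity}, observe that Dyck paths reaching height larger than $j_0$ contribute $0$ modulo $p$. Thus the generating function $\sum_n L_nx^n$ agrees modulo $p$ with the truncated continued fraction in $b(0),b(1),\dotsc,b(j_0-1)$, which by Lemma~\ref{lem:cfracexpansion} equals an explicit ratio $P(x)/Q(x)$.

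The crux is a degree count. Writing $p=4t+3$, I would have $j_0-1=2t$, and I would argue that the sets $S_k(0,2t)$ and $S_k(1,2t)$ have different maximum cardinalities. The lower bound $i_m\geq 2(m-1)$ for sequences in $S_k(0,2t)$ starting at $i_1=0$ shows the unique maximal sequence is $(0,2,4,\dotsc,2t)$, so $\deg Q=t+1$ with leading coefficient $(-1)^{t+1}\prod_{i=0}^{t}(4i+1)^2$. On the other hand, any $(i_1,\dotsc,i_k)\in S_k(1,2t)$ satisfies $i_k\geq 2k-1$, forcing $k\leq t$ and hence $\deg P\leq t$. This strict inequality $\deg P<\deg Q$ is precisely where the hypothesis $p\equiv 3\pmod 4$ enters; for $p\equiv 1\pmod 4$ the truncation index $j_0-1$ would be odd and the two maximum degrees would coincide, obstructing a direct application of Proposition~\ref{prop:pureperiodicity}.

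To finish, I would verify the coprimality conditions required by Proposition~\ref{prop:pureperiodicity}. The constant term of $Q$ is $1$. For the leading coefficient $(-1)^{t+1}\prod_{i=0}^{t}(4i+1)^2$, the factor $4i+1$ lies in $\{1,5,\dotsc,4t+1\}=\{1,5,\dotsc,p-2\}$ for $0\leq i\leq t$, so none is divisible by $p$. Proposition~\ref{prop:pureperiodicity} then yields pure periodicity. The only substantive step is the maximum-chain count in $S_k(0,2t)$ versus $S_k(1,2t)$; the coefficient computations and final invocation are routine.
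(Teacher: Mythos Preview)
Your proposal is correct and follows essentially the same route as the paper's proof: truncate the continued fraction at index $k=j_0-1=(p-3)/2$, invoke Lemma~\ref{lem:cfracexpansion} to obtain $P/Q$, compare $\deg P$ and $\deg Q$ via the maximal sequences in $S_k(1,2t)$ and $S_k(0,2t)$, and check the hypotheses of Proposition~\ref{prop:pureperiodicity}. Your write-up is in fact slightly more explicit than the paper's about why the degree gap appears precisely when $p\equiv 3\pmod 4$ and about the exact form of the leading coefficient of $Q$.
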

\begin{proof}
Let $k=\frac{p-3}{2}$. Then $\{L_n\pmod{p}\}$ has generating function
\[\cfrac{1}{1-\cfrac{b(0)x}{1-\cfrac{b(1)x}{1-\cfrac{b(2)x}{1-\cfrac{\cdots}{1-b(k)x}}}}}=\frac{P(x)}{Q(x)}\]
for the polynomials $P, Q$ given by Lemma \ref{lem:cfracexpansion}. The degree of $P$ is $\left\lceil\frac{k}{2}\right\rceil=\frac{p-3}{4}$ while the degree of $Q$ is $\left\lceil\frac{k+1}{2}\right\rceil = \frac{p+1}{4}$. Therefore $\deg P < \deg Q$. Also, the constant term of $Q$ is $1$ and the leading coefficient is $(-1)^{k/2}b(0)b(2)\dotsm b(k)$, which is not a multiple of $p$. Therefore we may apply Proposition \ref{prop:pureperiodicity}.
\end{proof}

Analyzing the period modulo prime powers is more difficult in general. The last result in this section will be to bound the period of $\{L_n\pmod{3^r}\}$, partially resolving this conjecture by Postnikov:

\begin{conjecture}[Postnikov \cite{postnikov2000morse}]
    Let $r\geq 3$ be a positive integer. The sequence $\{L_n\pmod{3^r}\}$ is purely periodic with period $2\cdot 3^{r-3}$.
\end{conjecture}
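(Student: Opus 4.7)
The plan is to apply Theorem~\ref{thm:periodicity} and Lemma~\ref{lem:cfracexpansion} to realize $\sum_{n\geq 0}L_n x^n$ as a rational function $P_r(x)/Q_r(x)$ modulo $3^r$, then factor $Q_r$ via Hensel's lemma and use partial fractions together with a $3$-adic lifting argument to compute the period by induction on $r$.

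First, since $\nu_3(b(i))=2\nu_3(2i+1)$ and the values $b(i)$ for $i\equiv 1\pmod 3$ contribute arbitrarily many factors of $3$, I can pick a truncation level $k_r$ with $3^r\mid b(0)b(1)\dotsm b(k_r)$. The proof of Theorem~\ref{thm:periodicity} then realizes the generating function as $P_r(x)/Q_r(x)$ modulo $3^r$, where the polynomials are given by Lemma~\ref{lem:cfracexpansion}. Next I would verify the hypotheses of Proposition~\ref{prop:pureperiodicity} to establish pure periodicity, handling the case where the leading coefficient of $Q_r$ is divisible by $3$ (caused by $b(4)=81$ and similar higher terms) by passing to the effective lower-degree reduction of $Q_r$ modulo $3^r$.

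The central step is to analyze the factorization of $Q_r(x)$ in $(\mathbb{Z}/3^r\mathbb{Z})[x]$. Modulo $3$, the values $b(i)$ with $i\equiv 1\pmod 3$ vanish while the others are $\equiv 1$, so an application of Lemma~\ref{lem:cfracexpansion} shows that $Q_r(x)$ reduces modulo $3$ to a polynomial whose non-unit part is $(1-x)(1+x)$. Hensel's lemma then lifts this uniquely to $Q_r(x) = (1-\alpha_r x)(1-\beta_r x)R_r(x)$ over $\mathbb{Z}/3^r\mathbb{Z}$ with $\alpha_r \equiv 1$, $\beta_r \equiv -1 \pmod{3}$, and $R_r(x) \equiv 1 \pmod{3}$. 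Partial fractions then yield
\[
L_n \;\equiv\; A_r\alpha_r^n + B_r\beta_r^n + (\textrm{contributions from } R_r) \pmod{3^r}
\]
for all sufficiently large $n$. For the base case $r=3$, a direct calculation using $k_3=4$ gives $Q_3(x) \equiv (1-x)(1-2x) \pmod{27}$, so $L_n \equiv 19+18\cdot 2^n \pmod{27}$ for $n\geq 1$; this has period $2=2\cdot 3^0$ because $18\cdot 3\equiv 0\pmod{27}$ and $2^2-1=3$, so the coefficient $18$ absorbs the factor of $3$ present in $2^2-1$.

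The inductive step would then track how the $3$-adic valuations $\nu_3(\alpha_r-1)$, $\nu_3(\beta_r+1)$, and $\nu_3(B_r)$ (together with analogous data for factors of $R_r$) evolve when $r$ increases by $1$, using Hensel lifting to update the factorization and the recursion from Lemma~\ref{lem:cfracexpansion} to update the partial-fraction data. Each such valuation should increment by exactly one at each step, so that the orders of $\alpha_r,\beta_r$ and the effective moduli for each contribution all multiply by $3$, producing the predicted period $2\cdot 3^{r-3}$. The main obstacle is establishing these sharp valuation increments uniformly in $r$: controlling the joint evolution of $\alpha_r,\beta_r,A_r,B_r,R_r$ as the truncation level $k_r$ grows requires a recursive description of these quantities tailored to the specific weight $b(x)=(2x+1)^2$, and this is the delicate arithmetic heart of the full conjecture. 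A more modest goal, sufficient for a partial resolution, is to show $x^{2\cdot 3^{r-3}}\equiv 1$ in $(\mathbb{Z}/3^r\mathbb{Z})[x]/(Q_r)$, which would yield $2\cdot 3^{r-3}$ as an upper bound on the period without matching lower bound.
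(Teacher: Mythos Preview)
This statement is recorded in the paper as a \emph{conjecture}; the paper does not prove it. What the paper establishes is the weaker Theorem~\ref{thm:Morse3period}: $\{L_n\pmod{3^r}\}$ is \emph{eventually} periodic with period \emph{dividing} $2\cdot 3^{r-3}$. Your proposal likewise retreats to a partial result in its final paragraph, so the fair comparison is with the proof of Theorem~\ref{thm:Morse3period}.

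Your route is genuinely different from the paper's. The paper never analyzes the global denominator $Q_r(x)$. Instead it decomposes $L_n=\sum_P\wt(P)$ combinatorially: to each Dyck path $P$ it attaches a ``$3$-power path'' $\alpha(P)$ recording only the steps incident to heights $\equiv 1\pmod 3$, and for each fixed shape $\beta$ shows that $\sum_{\alpha(P)=\beta}\wt(P)$ has generating function $D_\beta\, x^{c_\beta}/\prod_i(1-a_ix)$ with every $a_i$ a unit modulo $3$ and with $D_\beta$ carrying a known power of $3$. The period of each piece is bounded via Lemmas~\ref{lem:binomperiod} and~\ref{lem:primepowerperiod}, and a short finite list of exceptional $\beta$ is treated by hand. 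This decomposition buys exactly what your monolithic approach lacks: each piece already has a denominator that splits into simple linear factors with unit roots, so no Hensel lifting or control of repeated factors is needed.

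There is a concrete gap in your plan. The assertion that ``$Q_r(x)$ reduces modulo $3$ to a polynomial whose non-unit part is $(1-x)(1+x)$,'' with residual factor $R_r\equiv 1\pmod 3$, fails once the truncation level exceeds~$4$. For instance at $k_r=7$ one computes $Q_r\equiv (1-x)(1+x)^2$ and $P_r\equiv (1+x)^2\pmod 3$. Thus $(1+x)$ occurs with multiplicity, the simple Hensel lift to two unit roots $\alpha_r,\beta_r$ is unavailable, and the partial-fraction ansatz $L_n\equiv A_r\alpha_r^n+B_r\beta_r^n+\cdots$ collapses. One can cancel the common $(1+x)^2$ over $\mathbb{F}_3$, but carrying out and controlling such cancellation over the non-domain $\mathbb{Z}/3^r\mathbb{Z}$, uniformly as $k_r$ grows with $r$, is precisely the ``delicate arithmetic heart'' you flag without resolving. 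Your $r=3$ base case is correct, and tracking $3$-adic data of the roots is a sensible instinct; but absent some device---like the paper's $3$-power-path decomposition---that tames the repeated factors of $Q_r$, neither the full conjecture nor even the divisibility upper bound follows from the outline as written.
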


Our result is the following:
\begin{theorem}
\label{thm:Morse3period}
    Let $r\geq 3$ be a positive integer. The sequence $\{L_n\pmod{3^r}\}$ is eventually periodic with period dividing $2\cdot 3^{r-3}$.
\end{theorem}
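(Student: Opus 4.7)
My plan is to combine the truncation result of Theorem~\ref{thm:periodicity} with the explicit rational form of Lemma~\ref{lem:cfracexpansion}, and then carry out a $3$-adic analysis of the resulting linear recurrence. Since $b(j)=(2j+1)^2$ gives $b(0)b(1)\cdots b(k)=((2k+1)!!)^2$, for each $r$ we can pick $k=k(r)$ with $\nu_3((2k+1)!!)\geq\lceil r/2\rceil$, allowing truncation at level $k$. Lemma~\ref{lem:cfracexpansion} then provides $\sum_{n\geq 0} L_n x^n \equiv P(x)/Q(x) \pmod{3^r}$ for polynomials $P,Q \in (\mathbb{Z}/3^r\mathbb{Z})[x]$ with constant term $1$, which yields a linear recurrence valid for all sufficiently large $n$ and hence eventual periodicity.

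Next I would study $Q(x)$ modulo $3$. Since $b(j)\equiv 0 \pmod 3$ exactly when $j\equiv 1 \pmod 3$ (and $b(j)\equiv 1 \pmod 3$ otherwise), only sequences in $S_j(0,k)$ whose entries all avoid residue $1 \pmod 3$ contribute to $Q \pmod 3$. A direct combinatorial or transfer-matrix argument on this restricted index set should then yield a factorization
\[
Q(x)\equiv (1-x)^a(1+x)^b \pmod 3
\]
for appropriate exponents $a,b$, so that every root $\alpha$ of $Q$ reduces to $\pm 1$ modulo $3$. Writing a partial-fraction expansion $L_n \equiv \sum_i c_i \alpha_i^n \pmod{3^r}$ valid for $n\geq \deg Q$, the lifting-the-exponent lemma yields
\[
\nu_3\bigl(\alpha_i^{2\cdot 3^{r-3}}-1\bigr) = \nu_3(\alpha_i^2-1)+(r-3)
\]
for each such root. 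Hence $L_{n+2\cdot 3^{r-3}}\equiv L_n\pmod{3^r}$ would follow from the pointwise inequality $\nu_3(c_i)+\nu_3(\alpha_i^2-1)\geq 3$ for every index $i$.

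The crux, and the main obstacle, is establishing this valuation estimate on the partial-fraction coefficients. I would first verify the base case $r=3$ directly, mirroring Examples~\ref{ex:Lnmod7} and \ref{ex:Lnmod11}: truncating at $k=4$ gives $Q(x)\equiv (1-x)(1-2x)\pmod{27}$ and $L_n \equiv 19+18\cdot 2^n\pmod{27}$ for $n\geq 1$, and the nontrivial eigenvalue $\alpha=2$ saturates the inequality with $\nu_3(18)+\nu_3(2^2-1)=2+1=3$. For general $r$ I would exploit the self-similarity $b(j+3^s)\equiv b(j)\pmod{3^s}$ of the weight function, which suggests that the truncated denominators $Q_r$ at different levels are compatibly related, and combine this with induction on $r$ to propagate the base-case valuation bound to every root at each level.
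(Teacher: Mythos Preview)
Your approach is genuinely different from the paper's, and the base case is sound: for $r\le 6$ one can truncate at $k=4$, the resulting $Q(x)\equiv 1-3x+2x^2\pmod{27}$ has simple roots $\{1,2\}$ modulo $3$, Hensel lifts them to $\mathbb{Z}_3$, and your valuation inequality $\nu_3(c_i)+\nu_3(\alpha_i^2-1)\ge 3$ indeed holds and finishes those cases via LTE.

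The real gap appears once $r\ge 7$, where you must truncate at $k\ge 7$. A direct computation (or the observation that $P/Q\equiv 1/(1-x)\pmod 3$ forces $P\equiv(1+x)^b$ and $Q\equiv(1-x)(1+x)^b$) shows that $Q(x)\pmod 3$ acquires a \emph{repeated} root at $-1$; e.g.\ for $k=7$ one gets $Q\equiv(1-x)(1+x)^2\pmod 3$. Two things then go wrong simultaneously. First, the cluster of roots of $Q$ near $-1$ lies in a ramified extension of $\mathbb{Q}_3$, so the clean LTE identity $\nu_3(\alpha^{2\cdot 3^{r-3}}-1)=\nu_3(\alpha^2-1)+(r-3)$ is no longer available in the form you use it. Second, the partial-fraction coefficients $c_i=\prod_{j\ne i}(\alpha_i-\alpha_j)^{-1}\cdot(\text{numerator})$ now pick up \emph{negative} $3$-adic valuation from the nearby roots, so the target inequality $\nu_3(c_i)+\nu_3(\alpha_i^2-1)\ge 3$ is no longer a mild estimate but a delicate cancellation statement. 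Your proposed induction via the congruence $b(j+3^s)\equiv b(j)\pmod{3^s}$ does not obviously control either of these phenomena: the truncation level, the degree of $Q$, and the ramification pattern all change with $r$, and nothing in the sketch relates the spectral data of $Q_{k(r)}$ to that of $Q_{k(r+1)}$.

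By contrast, the paper avoids any spectral analysis. It classifies Dyck paths by their ``$3$-power path'' $\beta$ (the trace of steps adjacent to heights $\equiv 1\pmod 3$), so that for fixed $\beta$ the contributing weight-sum has generating function a monomial times a product $\prod_v(1-a_v x)^{-1}$ with each $a_v\in\{1,\,b(3j+2)+b(3j+3)\}\not\equiv 0\pmod 3$. This reduces the period of each piece to a direct application of Lemma~\ref{lem:binomperiod} and Lemma~\ref{lem:primepowerperiod}, with a short explicit computation for the handful of $\beta$ that stay at height $\le 1$. If you want to salvage your route, you would need either a Jordan-block version of your period estimate that handles the polynomial-in-$n$ terms coming from repeated factors of $Q$ modulo $3$, or a way to cancel the spurious $(1+x)$-factors from $P$ and $Q$ simultaneously over $\mathbb{Z}/3^r\mathbb{Z}$ before taking roots.
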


The strategy of the proof is to classify Dyck paths of semilength $n$ based on edges with weight divisible by $3$. We show that within each class of paths, the sum of weights is eventually periodic in $n$ with period dividing $2\cdot 3^{r-3}$ (except for a small exception). Before proving Theorem \ref{thm:Morse3period}, we have two technical lemmas:

\begin{lemma}
\label{lem:binomperiod}
    Let $p$ be a prime and $m$ be a positive integer. The period of the sequence $\{\binom{n}{m}\pmod{p}\}$ divides the least prime power $p^k$ such that $p^k >m$.
\end{lemma}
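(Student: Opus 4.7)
The plan is to invoke Lucas' theorem, which states that for a prime $p$, if $n=\sum_{i\geq 0} n_i p^i$ and $m=\sum_{i\geq 0} m_i p^i$ are the base-$p$ expansions of $n$ and $m$, then
\[\binom{n}{m}\equiv \prod_{i\geq 0}\binom{n_i}{m_i}\pmod{p}.\]
The key observation is that since $p^k>m$, the base-$p$ expansion of $m$ is supported on positions $0,1,\ldots,k-1$, i.e., $m_i=0$ for all $i\geq k$. For those higher positions we have $\binom{n_i}{0}=1$, so the product collapses to
\[\binom{n}{m}\equiv \prod_{i=0}^{k-1}\binom{n_i}{m_i}\pmod{p},\]
which depends only on the residue $n\bmod p^k$.

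From here the conclusion is immediate: if $n\equiv n'\pmod{p^k}$ then $n_i=n'_i$ for $0\leq i\leq k-1$, whence $\binom{n}{m}\equiv\binom{n'}{m}\pmod{p}$. Applying this to $n'=n+p^k$ shows that $\binom{n+p^k}{m}\equiv\binom{n}{m}\pmod{p}$ for every nonnegative integer $n$, so the period of the sequence $\{\binom{n}{m}\bmod p\}$ divides $p^k$.

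There is no real obstacle here — Lucas' theorem does all the work, and the only thing to verify is the simple digit-count bound $m<p^k$ implying the digits of $m$ are confined to the bottom $k$ places. One small sanity check worth including is that the statement does not claim $p^k$ is the minimal period, only that the minimal period divides it, which matches exactly what the Lucas argument yields; examples (e.g.\ $m=1$ and $p^k=p$) show the bound can be sharp while in other cases (e.g.\ $m=0$) the true period is smaller.
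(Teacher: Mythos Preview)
Your proof is correct and follows essentially the same approach as the paper: both invoke Lucas' theorem and observe that since $m<p^k$, the base-$p$ digits of $m$ vanish in positions $\geq k$, so $\binom{n}{m}\bmod p$ depends only on $n\bmod p^k$. Your write-up is simply a more detailed version of the paper's one-line argument.
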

\begin{proof}
    This follows from Lucas's theorem, because $\binom{n}{m}$ only depends on the digits of $n$ in base $p$ for which the corresponding digit in $m$ is nonzero.
\end{proof}

\begin{lemma}
\label{lem:primepowerperiod}
    Let $\{a_n\}$ be a sequence satisfying a linear recurrence
    \[a_n=c_1a_{n-1}+\dotsb+c_ka_{n-k}\]
    with initial conditions $a_1=a_2=\dotsb = a_{k-1}=0, a_{k}=1$. This sequence is eventually periodic modulo any positive integer $m$; let $\lambda(m)$ denote the period. Then for any prime $p$ and positive integer $r$, the period $\lambda(p^r)$ divides $p^{r-1}\lambda(p)$.
\end{lemma}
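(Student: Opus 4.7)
The plan is to recast the periodicity question in terms of the companion matrix of the recurrence and then analyze the $p$-adic behavior of its powers via a Hensel decomposition.

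Let $A$ be the $k \times k$ companion matrix of the recurrence, so the state vectors $v_n = (a_n, a_{n+1}, \ldots, a_{n+k-1})^\top$ satisfy $v_{n+1} = Av_n$ and $v_1 = e_k$. The matrix whose columns are $v_1, \ldots, v_k$ is upper anti-triangular with $1$'s on the anti-diagonal, so it has determinant $\pm 1$, and $v_1, \ldots, v_k$ form a $\mathbb{Z}$-basis of $\mathbb{Z}^k$. Since each coordinate of $v_n$ is a shift of $a_n$, eventual periodicity of $\{a_n\}$ modulo $m$ transfers to eventual periodicity of $\{v_n\}$ modulo $m$, which in turn forces $A^n$ itself to be eventually periodic on the basis $v_1, \dots, v_k$. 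Hence $\lambda(m)$ equals the eventual period $\mu(m)$ of the matrix sequence $\{A^n\}$ modulo $m$, and it suffices to prove $\mu(p^r) \mid p^{r-1}\mu(p)$.

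To analyze $\mu(p^r)$, I would decompose $\mathbb{Z}_p^k$ into an invertible part and a topologically nilpotent part. By Hensel's lemma the characteristic polynomial factors as $\chi_A(x) = g(x)h(x) \in \mathbb{Z}_p[x]$ with $g(0)$ a $p$-adic unit, $h(x) \equiv x^{\deg h} \pmod p$, and $g, h$ coprime in $\mathbb{Z}_p[x]$. Because $A$ is a companion matrix, $\mathbb{Z}_p^k \cong \mathbb{Z}_p[x]/(\chi_A(x))$ as a $\mathbb{Z}_p[A]$-module; coprimality then yields $\mathbb{Z}_p^k = V \oplus W$ with $V \cong \mathbb{Z}_p[x]/(g(x))$ and $W \cong \mathbb{Z}_p[x]/(h(x))$. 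On $V$, multiplication by $x$ is invertible since $g(0)$ is a unit, so $A|_V$ is invertible over $\mathbb{Z}_p$; on $W$ we have $A^{\deg h} \equiv 0 \pmod p$, hence $A^{r\deg h} \equiv 0 \pmod{p^r}$. Thus for sufficiently large $n$, $A^n$ vanishes modulo $p^r$ on $W$ and acts invertibly on $V/p^r V$, so $\mu(p^r)$ equals the order of $A|_V$ in $\operatorname{GL}(V/p^rV)$ and $\mu(p)$ equals its order in $\operatorname{GL}(V/pV)$.

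Finally, the standard lifting-the-exponent estimate: if $M \equiv I \pmod p$ in $\operatorname{GL}_d(\mathbb{Z}/p^r\mathbb{Z})$, writing $M = I + pN$ and expanding
\[M^{p^{r-1}} = I + \sum_{j=1}^{p^{r-1}}\binom{p^{r-1}}{j}p^j N^j,\]
one checks via $v_p\binom{p^{r-1}}{j} = r - 1 - v_p(j)$ together with $j > v_p(j)$ for $j\geq 1$ that every term with $j \geq 1$ has $p$-adic valuation at least $r$, so $M^{p^{r-1}} \equiv I \pmod{p^r}$. Applying this with $M = A^{\mu(p)}|_V$ yields $\mu(p^r) \mid p^{r-1}\mu(p)$, equivalently $\lambda(p^r) \mid p^{r-1}\lambda(p)$. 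The main obstacle is setting up the Hensel decomposition and checking that the $\mathbb{Z}_p$-module splitting behaves well under reduction mod $p^r$; once that is in place, the rest is routine bookkeeping with companion matrices and binomial valuations.
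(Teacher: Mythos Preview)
Your argument is correct. The companion-matrix reformulation, the Hensel splitting $\mathbb{Z}_p^k = V \oplus W$ into an $A$-invertible summand and a topologically nilpotent summand, and the final binomial estimate all go through as you describe. The one point worth tightening is the identification $\lambda(m) = \mu(m)$: you should note explicitly that the matrix $[v_1\,|\,\cdots\,|\,v_k]$ with entries $a_{i+j-1}$ is anti-triangular with $1$'s on the anti-diagonal (so invertible over $\mathbb{Z}/m\mathbb{Z}$), which is exactly what lets you pass from periodicity of $\{v_n\}$ to periodicity of $\{A^n\}$. You also implicitly use that $g$ and $h$ are monic with unit resultant, so that $\mathbb{Z}_p[x]/(gh) \cong \mathbb{Z}_p[x]/(g) \times \mathbb{Z}_p[x]/(h)$ holds integrally and the splitting survives reduction mod $p^r$; this is fine but deserves a sentence.

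As for comparison with the paper: the paper does not prove this lemma at all --- it simply cites Theorem~3 of \cite{bright2008periodicity}. Your write-up is therefore strictly more self-contained than what the paper offers. The Hensel/Fitting-type decomposition you use is a clean and conceptually natural route; an alternative (and what one often sees in the recurrence-periodicity literature) is to work directly with the ring $\mathbb{Z}_p[x]/(\chi_A)$ and argue elementwise, but the content is the same.
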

The proof is given in Theorem 3 of \cite{bright2008periodicity}.

\begin{proof}[Proof of Theorem \ref{thm:Morse3period}]
    For any Dyck path $P$ of semilength $n$, define its \emph{$3$-power path} $\alpha(P)$ as the Dyck path obtained by only considering all edges to or from $y=k$ where $k\equiv 1\pmod{3}$, and also marking all vertices for which the two adjacent edges are in opposite directions and were adjacent in $P$.
    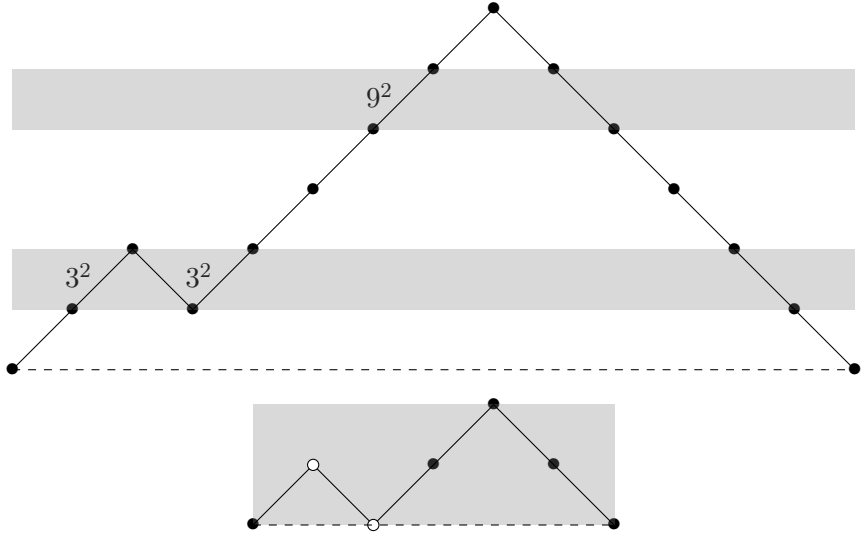
\begin{figure}[h!]
    \centering
    \begin{tikzpicture}[scale=0.8]
    \node at (0,0) {$\bullet$};
    \node at (1,1) {$\bullet$};
    \node at (2,2) {$\bullet$};
    \node at (3,1) {$\bullet$};
    \node at (4,2) {$\bullet$};
    \node at (5,3) {$\bullet$};
    \node at (6,4) {$\bullet$};
    \node at (7,5) {$\bullet$};
    \node at (8,6) {$\bullet$};
    \node at (9,5) {$\bullet$};
    \node at (10,4) {$\bullet$};
    \node at (11,3) {$\bullet$};
    \node at (12,2) {$\bullet$};
    \node at (13,1) {$\bullet$};
    \node at (14,0) {$\bullet$};
    \node[above left] at (1.5, 1.2) {$3^2$};
    \node[above left] at (3.5, 1.2) {$3^2$};
    \node[above left] at (6.5, 4.2) {$9^2$};
    \fill[fill=gray, fill opacity = 0.3] (0, 1)--(14, 1)--(14, 2)--(0, 2)--cycle;
    \fill[fill=gray, fill opacity = 0.3] (0, 4)--(14, 4)--(14, 5)--(0, 5)--cycle;
    \draw(0,0)--(2,2)--(3,1)--(8,6)--(14,0);
    \draw[dashed](0,0)--(14,0);
    \end{tikzpicture}
    
    \begin{tikzpicture}[scale=0.8]
    \node at (0,0) {$\bullet$};
    \node at (3,1) {$\bullet$};
    \node at (4,2) {$\bullet$};
    \node at (5,1) {$\bullet$};
    \node at (6,0) {$\bullet$};
    \fill[fill=gray, fill opacity = 0.3] (0, 0)--(6, 0)--(6, 2)--(0, 2)--cycle;
    \draw(0,0)--(1, 1)--(2,0)--(4,2)--(6,0);
    \node[white] at (1,1) {};
    \node[white] at (2,0) {};
    \draw[dashed](0,0)--(6,0);
    \end{tikzpicture}
    \caption{A Dyck path for $n=7$, and its $3$-power path. The white vertices are marked because $P$ stays within the gray region.}
    \label{fig:powerpath}
    \end{figure}
    
    See Figure \ref{fig:powerpath} for an example. Visually, we shade in all levels for which the weight is a multiple of three and reduce only to steps which fall within the shaded regions. The marked vertices (white vertices) indicate when the path stayed in a shaded region for two consecutive steps.
    
    If $\alpha(P)$ has semilength $k$, then the weight of $P$ is divisible by $3^{2k}$ because each upward edge of $\alpha(P)$ corresponds to an upward edge of $P$ with weight divisible by $3^2$. We may write
    \[L_n=\sum_{P}\wt(P)=\sum_{\beta}\sum_{P: \alpha(P)=\beta}\wt(P)\]
    where the first sum is over all Dyck paths $P$ of semilength $n$, the second sum is over all $3$-power paths $\beta$, and the third sum is over all $P$ of semilength $n$ such that $\alpha(P)=\beta$. Modulo $3^r$, we only need to consider $\beta$ of semilength at most $\left\lfloor\frac{r-1}{2}\right\rfloor$ because otherwise $\wt(P)$ is divisible by $3^r$.
    
    Next we develop a formula for $\sum_{P: \alpha(P)=\beta}\wt(P)$ when $\beta$ is fixed. Let $\beta$ have semilength $k$. Consider a vertex $v$ of $\beta$ and let $e_L$ and $e_R$ be the edges to the left and right. These two edges correspond to some edges $e_L'$ and $e_R'$ of the original path $P$. Let $G(v)$ be the edges between $e_L'$ and $e_R'$. (If $v$ is the first or last vertex of $\beta$, then $G(v)$ consists of the first or last segment of $P$ that does not touch a gray region.)
    
    Note that by definition, $G(v)$ is empty whenever $v$ is a marked vertex. Furthermore, $G(v)$ is always nonempty whenever $v$ is not marked, because if $e_1$ and $e_2$ are in the same direction then $e_L'$ and $e_R'$ cannot be adjacent in $P$ because they are at different levels, and if they are not in the same direction then $e_L'$ and $e_R'$ cannot be adjacent in $P$ because $v$ was not marked.
    
    Enumerating the vertices of $\beta$ as $v_0, v_1, \dotsc, v_{2k}$ and the edges as $e_1, e_2, \dotsc, e_{2k}$ the path $P$ may be written as the union \[P=G(v_0)\cup e_1 \cup G(v_1)\cup \dotsb \cup e_{2k}\cup G(v_{2k}).\]
    We have already observed that from $\beta$ alone, we can determine if $G(v)$ is empty or not. Let $V_{0}$ be the set of vertices for which $G(v)$ is empty, and $V_1$ be the set of vertices for which $G(v)$ is nonempty. Furthermore, the parity of $\lvert G(v)\rvert$ is also always determined: it is even except when $v$ is the first or last vertex because the start and endpoints of $G(v)$ are at the same height or distance $2$.
    
    To create all paths $P$ with $\alpha(P)=\beta$, we will choose the tuple $(\lvert G(v_0)\rvert, \dotsc, \lvert G(v_{2k}\rvert)$ first, and then for each $v$ determine what the edges of $G(v)$ are conditioned on $\lvert G(v)\rvert$. We know some elements of $(\lvert G(v_0)\rvert, \dotsc, \lvert G(v_{2k}\rvert)$ are zero, while the rest are $1+2m$ or $2+2m$ where $m$ can be any nonnegative integer. The weight of $P$ can be written as the product of the weights of $G(v_i)$ and the edges $e_i$ associated with $\beta$.
    
    Consider a vertex $v\in V_1$. First consider the case where $v$ is a start or end vertex and $G(v)$ has $1+2m$ edges. Clearly there is only one possibility for $G(v)$, which is to alternate between $y=0$ and $y=1$.
    
    Now suppose $v$ is not a start or end vertex and $G(v)$ has $2+2m$ edges. In the case where the edges $e_{L}, e_{R}$ to the left and right of $v$ go upward and downward respectively, $G(v)$ must go up from the top of a gray strip, at the line $y=3j+2$ for some $k$. The first edge of $G(v)$ must go from $y=3j+2$ to $y=3j+3$, while the last edge of $G(v)$ must go from $y=3j+3$ to $y=3j+2$. The $2m$ edges in between can be partitioned into pairs: the second and third edge either follow the path $3j+3\to 3j+2\to 3j+3$ or $3j+3\to 3j+4\to 3j+3$, the same is true for the fourth and fifth edges, etc. There are $2^m$ possibilities for $G(v)$, and the sum of the weights of $G(v)$ over all possibilities is given by $b(3j+2)^2(b(3j+2)+b(3j+3))^m$.
    
    Likewise, if $e_{L}$ and $e_{R}$ are downward and upward respectively, the sum of the weights of $G(v)$ over all possibilities is given by $b(3j+3)^2(b(3j+2)+b(3j+3))^m$ for some $j$. (There is the exception where $v$ is at $y=0$ in $\beta$, which will be similar to the case where $v$ is at the ends and the total weight will be $1$.) If $e_{L}$ and $e_{R}$ are in opposite directions, then the sum of the weights of $G(v)$ over all possibilities is given by $b(3j+2)b(3j+3)(b(3j+2)+b(3j+3))^m$ for some $j$.
    
    These results can be summarized as follows: for each $v\in V_1$, we may assign a nonnegative integer $m_v$ so that $G(v)$ has $1+2m_v$ or $2+2m_v$ edges depending on whether $v$ is at the ends. The total number of edges is $2k+2\lvert V_1\rvert-2 +2\sum_{v\in V_1} m_v=2n$ so we have the constraint $\sum m_v = n-k-\lvert V_1\rvert+1$. For each $v\in V_1$, we may independently determine $G(v)$, and the sum of the weights across the possible $G(v)$ is of the form $d_va_v^{m_v}$ for some constants $d_v, a_v$. A key property here is that $a_v$ is never a multiple of $3$, because it is either $1$ or $b(3j+2)+b(3j+3)\equiv 2\pmod{3}$. (Recall that our weight function is $b(x)=(2x+1)^2$.)   
    
    
    In summary, for a given path $3$-power path $\beta$, we have
    \[\sum_{P: \alpha(P)=\beta}\wt(P)=D_{\beta}\sum_{i_1+\dotsb+i_{\ell}=n-c_{\beta}}a_1^{i_1}\dotsm a_{\ell}^{i_\ell}\]
    where 
    \begin{align*}
        \ell &= \lvert V_1\rvert \\
        c_{\beta} &=k+\lvert V_1\rvert-1 \\
        D_{\beta} &= \prod_{e\in \beta}\wt(e)\prod_{v\in V_1}d_v
    \end{align*}
    are some constants depending only on $\beta$, and the $a_i$ are constants depending only on $\beta$ which are also not multiples of $3$.
    Denote 
    \[f(n)=\sum_{i_1+\dotsb+i_{\ell}=n-c_{\beta}}a_1^{i_1}\dotsm a_{\ell}^{i_\ell}.\]
    We have the generating function
    \[\sum_{n\geq 0}f(n)x^n=\sum_{n\geq 0}\sum_{i_1+\dotsb+i_{\ell}=n-c_{\beta}}a_1^{i_1}\dotsm a_{\ell}^{i_\ell}x^n=\frac{x^{c_{\beta}}}{(1-a_1x)\dotsm (1-a_{\ell}x)}.\]
    Modulo $3$, this takes the form $\frac{x^{c_\beta}}{(1-x)^{a}(1+x)^b}$ for some $a, b$ with $a+b=\ell$. It is known that this implies with this generating function may be written as
    \[f(n)\equiv \sum_{i=0}^{a-1}\binom{n}{i}+\sum_{i=0}^{b-1}\binom{n}{i}(-1)^n\]
    for $n\geq c_{\beta}$ (see \cite{pinch1995recurrent}). By Lemma \ref{lem:binomperiod}, the period of the above sequence modulo $3$ is a divisor of $2\cdot 3^m$ where $3^m$ is the least power of $3$ which is greater than $\max(a, b)$.
    
    For now we will assume that $\beta$ goes above the line $y=1$. This implies several things: first $\ell\geq 4$ because at least four vertices are in $V_1$: the start and end, and the two vertices where the path must cross the line $y=1$. It also implies that $D_{\beta}$ is a multiple of $3^{2k+2}$ because one of the edges has weight $9^2$.
    
    Note that $\ell\leq 2k+1$. Then we can say that $m\leq \ell-1\leq 2k$ as $3^{\ell-1} > \ell \geq \max(a, b)$ for $\ell\geq 4$. The period of the sequence modulo $3$ will divide $2\cdot 3^{2k}$. We also have the initial conditions $f(c_{\beta})=1$ and $f(n)=0$ for all $n < c_{\beta}$. By Lemma \ref{lem:primepowerperiod}, the period of $f(n)\pmod{3^{r-2k-2}}$ is a divisor of $2\cdot 3^{r-3}$. As $D_{\beta}$ is divisible by $3^{2k+2}$, the period of $\sum_{P: \alpha(P)=\beta}\wt(P)\pmod{3^{r}}$ is a divisor of $2\cdot 3^{r-3}$.
    
    Now we consider cases where $\beta$ does not go above $y=1$. Such paths must zigzag between $y=0$ and $y=1$. We also assume that $\beta$ is not the empty path, as that path has $\sum_{\beta}\sum_{P: \alpha(P)=\beta}\wt(P)=1$ which is periodic. The generating function takes the form
    \[\frac{x^{c_\beta}}{(1-x)^a(1-74x)^b}\]
    for some $a, b$ (here $74=5^2+7^2$). We also have the condition $a\geq 2$, because the start and end vertices of $\beta$ always have $a_v=1$. Let $3^m$ be the least power of $3$ greater than $\max(a, b)-1$. Then the period of this sequence modulo $3$ is divides $2\cdot 3^{m}$. If $m\leq \ell-3=a+b-3$, then the period of $f(n)\pmod{3^{r-2k}}$ divides $2\cdot 3^{r-3}$ which will be enough. We can directly check that this rules out all cases with $\max(a, b)\geq 5$, and further verification of the finitely many remaining cases leaves the following pairs:
    \[(a, b) = (2, 0), (3, 0), (4, 0),  (2, 1).\]
    In the case $(a, b)=(4, 0)$, the period of $f(n)\pmod{3^{r-2k}}$ divides $3^{r-2k+1}$. Therefore we only need to consider when the semilength $k$ is at most $1$. In fact, there is no path with $(a, b)=(4, 0)$ and semilength at most $1$, so we can eliminate this case.
    
    In the other three cases, the period of $f(n)\pmod{3^{r-2k}}$ divides $2\cdot 3^{r-2k}$ and again it suffices to only consider $k\leq 1$. This leaves the following two paths $\beta_1, \beta_2$:
    \begin{center}
    \begin{tikzpicture}[scale=0.8]
    \node at (0,0) {$\bullet$};
    \node at (1,1) {$\bullet$};
    \node at (2,0) {$\bullet$};
    \fill[fill=gray, fill opacity = 0.3] (0, 0)--(2, 0)--(2, 1)--(0, 1)--cycle;
    \draw(0,0)--(1, 1)--(2,0);
    \draw[dashed](0,0)--(2,0);
    \end{tikzpicture}
    \begin{tikzpicture}[scale=0.8]
    \node at (0,0) {$\bullet$};
    \node at (2,0) {$\bullet$};
    \fill[fill=gray, fill opacity = 0.3] (0, 0)--(2, 0)--(2, 1)--(0, 1)--cycle;
    \draw(0,0)--(1, 1)--(2,0);
    \draw[dashed](0,0)--(2,0);
    \node[white] at (1,1) {};
    \end{tikzpicture}
    \end{center}
    We will group these paths together. With all the factors included, we have
    \begin{align*}
        \sum_{P: \alpha(P)=\beta_1}\wt(P) &= 9\cdot 25\cdot\sum_{i_1+i_2+i_3=n-3}74^{i_2} = 9\cdot 25\cdot \sum_{k=0}^{n-3}(n-2-k)74^k \\
        &=9\cdot 25\cdot\left(\frac{74^{n-1}}{73^2}-\frac{n}{73}+\frac{72}{73^2}\right). \\
        \sum_{P: \alpha(P)=\beta_2}\wt(P) &= 9(n-1). \\
    \end{align*}
    The sum of these two is
    \[g(n)=9\left(\frac{25}{73^2}\cdot 74^{n-1}+\frac{48}{73}n-\frac{3529}{73^2}\right).\]
    We claim that $T=2\cdot 3^{r-3}$ is a period of this function of $n$ modulo $3^r$. It suffices to show that $2\cdot 3^{r-3}$ is a period of $\frac{25}{73^2}\cdot 74^{n-1}+\frac{48}{73}n-\frac{3529}{73^2}\pmod{3^{r-2}}$. However, this is easy because $74^n$ has period $2\cdot 3^{r-3}$ (since $\varphi(3^{r-2})=2\cdot 3^{r-3}$) and $\frac{48}{73}n$ has period $3^{r-3}$ because $48$ is divisible by $3$.
    
    We finish the argument as follows: any $\beta$ with semilength greater than $\left\lfloor\frac{r-1}{2}\right\rfloor$ may be ignored, so we only have finitely many $\beta$ to consider in the summation
    \[L_n=\sum_{\beta}\sum_{P: \alpha(P)=\beta}\wt(P).\]
    For each $\beta$, the sum $\sum_{P: \alpha(P)=\beta}\wt(P)$ is eventually periodic in $n$ with period dividing $2\cdot 3^{r-3}$ (aside from the two paths which we grouped together). As there are finitely many $\beta$, we conclude that $L_n\pmod{3^r}$ is eventually periodic with period dividing $2\cdot 3^{r-3}$.
\end{proof}

\subsection{Further Morse link number conjectures}\label{subsec:valuationconjectures}

Postnikov also stated some conjectures regarding $2$ and $5$-adic valuations of sequences related to $L_n$. We rephrase some below and provide some further conjectures.

\begin{conjecture}[\cite{postnikov2000morse}]
    There exists a $2$-adic integer $\alpha=\dotsc 010111_2$ such that
    \[\xi_2(L_n-C_n)=s_2(n)+\xi_2(n-\alpha)+2\]
    for all $n\geq 2$.
\end{conjecture}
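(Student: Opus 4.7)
The plan is to construct $\alpha$ digit-by-digit in its $2$-adic expansion, using the exact identity $b(y)=(2y+1)^2 = 1+8\binom{y+1}{2}$. Multiplying out the weights of a Dyck path gives
\[
L_n - C_n = \sum_{j\geq 1} 8^j\,B_j(n),\qquad B_j(n)=\sum_P\sum_{\substack{S\subseteq[n]\\|S|=j}}\prod_{i\in S}\binom{y_i+1}{2},
\]
where $P$ ranges over Dyck paths of semilength $n$ with up-steps at heights $y_1, \dotsc, y_n$. The first step is to derive closed generating functions for the $B_j(n)$. For $j=1$, a standard decomposition of Dyck paths around a marked up-step (at each possible height) yields
\[
\sum_{n\geq 0}B_1(n)\,x^n = \frac{1-\sqrt{1-4x}-2x}{2(1-4x)^{3/2}},\qquad B_1(n)=\tfrac{1}{2}(2n+1)(n+1)C_n - 2^{2n-1} - (2n-1)\,n\,C_{n-1},
\]
and analogous (more involved) expressions for $j\geq 2$ follow from marking $j$ up-steps simultaneously.

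The second step is to establish the baseline bound $\xi_2(L_n-C_n)\geq s_2(n)+2$ and pin down the first bits of $\alpha$. By Kummer's theorem, each summand in $B_1(n)$ has $2$-adic valuation at least $s_2(n)-1$, so $\xi_2(8\,B_1(n))\geq s_2(n)+2$, and analogous estimates should hold for $j\geq 2$. The parity of $B_1(n)$ can be computed explicitly: $B_1(n)\equiv\binom{2n}{n}/2\pmod 2$ for $n\geq 2$, which is odd precisely when $n$ is a power of $2$. Combined with parity analysis of the higher-order $B_j$, this should show that $\xi_2(L_n-C_n)>s_2(n)+2$ iff $n$ is odd, establishing $\alpha\equiv 1\pmod 2$; continuing at higher resolution fixes each subsequent bit.

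To organize this analysis at scale, I would extend the $\varepsilon$-machinery of Section~\ref{subsec:2-adics}: rather than the map $\varepsilon\colon\mathcal{F}\to\{0,1\}^{\mathbb{N}}$ recording only $\Delta^n f/2^n\bmod 2$, introduce a refined map $\varepsilon^{(r)}\colon\mathcal{F}\to(\mathbb{Z}/2^r\mathbb{Z})^{\mathbb{N}}$ recording this ratio modulo $2^r$, along with analogues of Lemmas~\ref{lem:epsilon formula} and~\ref{lem:reduction} at the higher resolution. Since $b(y)=(2y+1)^2$ is quadratic, $\Delta^n b=0$ for $n\geq 3$, so the refined $\varepsilon^{(r)}$-sequence for $b$ has only finitely many nonzero entries and the higher-resolution orbit analysis remains tractable. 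The specific low bits $\alpha\equiv 7\pmod{16}$, $\alpha\equiv 23\pmod{32}$, $\alpha\equiv 23\pmod{64}$ can then be verified by direct computation.

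The main obstacle is the inductive step: proving that the many orbit contributions (equivalently, the terms $8^j B_j(n)$) cancel exactly as required to make $\xi_2(L_n-C_n)-s_2(n)-2$ agree with the $2$-adic distance $\xi_2(n-\alpha)$. The cancellations are delicate even at $n=3$, where the three orbit contributions $8, 224, 88$ sum to $320=2^6\cdot 5$, a valuation strictly greater than any individual term. A successful argument will likely require either a refined orbit-level identity packaging these cancellations, or a direct factorization $L_n-C_n=2^{s_2(n)+2}\cdot (n-\alpha)\cdot u(n)$ with $u(n)\in\mathbb{Z}_2^{\times}$ derived from the generating-function framework. Note also that, unlike the results of Section~\ref{sec:period}, $L_n$ itself is never eventually periodic modulo any power of $2$ (every $b(y)$ is odd), so the periodicity techniques there do not directly apply.
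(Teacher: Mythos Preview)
The statement you are attempting to prove is listed in the paper as an open conjecture (attributed to Postnikov), not as a theorem; the paper provides no proof for it. There is therefore nothing in the paper to compare your argument against.

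As for the proposal itself, you correctly acknowledge that it is not a proof but a plan, and you identify the central difficulty yourself: controlling the delicate cancellations among the $8^j B_j(n)$ (or equivalently among orbit contributions) so that the residual valuation matches $\xi_2(n-\alpha)$ exactly. Nothing in your outline explains \emph{why} such cancellations should occur, let alone why they should be governed by a single $2$-adic integer $\alpha$; the refined $\varepsilon^{(r)}$-machinery you propose would let you compute $L_n-C_n$ modulo higher powers of $2$ orbit by orbit, but that is bookkeeping, not a mechanism for the conjectured structure. The generating-function formula you give for $B_1(n)$ and the parity claim $B_1(n)\equiv \tfrac{1}{2}\binom{2n}{n}\pmod 2$ are plausible first steps, but the leap from ``verify the first few bits of $\alpha$'' to ``each subsequent bit is fixed by continuing at higher resolution'' is exactly where a proof would need a new idea, and none is supplied. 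In short: the expansion $b(y)=1+8\binom{y+1}{2}$ is a natural starting point, but the proposal remains a heuristic outline of a hard open problem rather than a proof.
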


We generalize this conjecture as follows:

\begin{conjecture}
    Let $k$ be a positive integer and $L^{(k)}_n$ denote the weighted Catalan numbers with weight $b(x)=(2x+1)^{2k}$. There exists a $2$-adic integer $\alpha_k$ and nonnegative integer $c_k$ such that
    \[\xi_2(L_n^{(k)}-C_n)=s_2(n)+\xi_2(n-\alpha_k)+c_k\]
    for all $n\geq 2$.
\end{conjecture}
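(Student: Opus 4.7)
The plan is to refine the orbit-decomposition and $\varepsilon$-map machinery of Section~\ref{sec:thm} so that it operates at higher $2$-adic precision. Since $b(x) = (2x+1)^{2k}$ satisfies the hypotheses of Theorem~\ref{thm:main} (one checks directly that $b(0)=1$, $4\mid\Delta b$, and $2^n\mid\Delta^n b$), we already know $\xi_2(L_n^{(k)}) = \xi_2(C_n) = s_2(n+1)-1$. Writing the orbit decomposition
\[L_n^{(k)} - C_n = \sum_{\O \in \mathcal{U}_n} |\O|\bigl(r_b(\O;0) - r_1(\O;0)\bigr),\]
the conjecture becomes a statement about how orbit contributions fail to cancel at the next few $2$-adic digits. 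Using the identity $s_2(n+1) = s_2(n)+1-\xi_2(n+1)$, the conjectured formula rearranges to
\[\xi_2\bigl(L_n^{(k)}-C_n\bigr) - \xi_2(C_n) = \xi_2(n+1) + \xi_2(n-\alpha_k) + c_k,\]
suggesting that the difference is divisible by $C_n \cdot (n+1)(n-\alpha_k) \cdot 2^{c_k}$ in a structural way.

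First I would introduce higher-precision invariants $\varepsilon^{(j),\O}_m \in \Z/2^{j+1}\Z$ that record the first $j+1$ digits of $\Delta^m r_b(\O;\,\cdot)/2^m$, and extend Lemma~\ref{lem:epsilon formula}, Lemma~\ref{lem:reduction}, and the coin-configuration description of Proposition~\ref{prop:coin} to operate modulo $2^{j+1}$. The refined reduction lemma must express $r_b(\O; x)$ for a complete-binary-subtree orbit as $\varepsilon_0^{2^k-2}$ times an explicit correction factor whose deviation from $1$ is tracked to the required precision. This apparatus should express each $\varepsilon^{(j),\O}_m$ as an explicit polynomial in the $2$-adic digits of the values of $b$, allowing careful bookkeeping of how the sum $\sum_{\O\in\mathcal{U}_n^{\text{min}}} \varepsilon^{(j),\O}_0$ depends on $n$.

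Second, I would redo the proof of Theorem~\ref{thm:main} at this higher precision. The leading order recovers $\xi_2(L_n^{(k)})=s_2(n+1)-1$, while the next layers capture $L_n^{(k)}-C_n$. I expect the dominant contribution to come from a family of ``almost minimal'' orbits (sizes $2^{s+1}, 2^{s+2}, \dotsc$ with $s = s_2(n+1)-1$), whose enumeration via Theorem~\ref{thm:orbit structure} interpolates $2$-adically in $n$ to a single convergent power series $\Phi_k : \Z_2 \to \Z_2$ depending only on $k$. The conjecture then reduces to the statement that $\xi_2(\Phi_k(n)) = \xi_2(n-\alpha_k) + c_k$ for a unique $\alpha_k \in \Z_2$ and integer $c_k \geq 0$, which follows from Weierstrass preparation / Strassmann's theorem for a $2$-adic analytic function with a unique simple zero.

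The hard part will be isolating the dominant contribution cleanly. In the proof of Theorem~\ref{thm:main} many orbit terms cancel pairwise to leave only the minimal-orbit count; here one needs delicate cancellations between minimal orbits and orbits whose size is a few additional factors of $2$ larger, with enough $2$-adic precision to see that they assemble into a single analytic $\Phi_k$ rather than a sum of unrelated pieces. Once this is done, $\alpha_k$ is the unique zero of $\Phi_k$ (existence and simplicity coming from Newton-polygon analysis of the coefficients arising from $(2x+1)^{2k}$) and $c_k = \xi_2(\Phi_k'(\alpha_k))$ (together with the fixed $\xi_2(n+1)$-shift). As a preliminary sanity check one should compute $L_n^{(k)} \bmod 2^{20}$ for $k=1, 2, 3$ and extract $\alpha_k \bmod 2^r$ for several $r$; a coherent pattern in these small cases would confirm the structural prediction before attempting the general combinatorial argument.
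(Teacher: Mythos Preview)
The statement you are attempting to prove is a \emph{Conjecture} in the paper (stated in Section~\ref{subsec:valuationconjectures} among ``Further Morse link number conjectures''), not a theorem. The paper provides no proof of it; there is therefore nothing to compare your proposal against.

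Your submission is a research plan rather than a proof, and as such it does not resolve the conjecture. The overall strategy---refining the $\varepsilon$-map to higher $2$-adic precision, tracking contributions from near-minimal orbits, and hoping these assemble into a single $2$-adic analytic function $\Phi_k$ with a unique simple zero---is a plausible line of attack, but the key steps are speculative. In particular, you assert without justification that the orbit contributions ``interpolate $2$-adically in $n$ to a single convergent power series $\Phi_k$'' and that this function has a unique simple zero; both of these are precisely the content of the conjecture, not established facts. Moreover, the paper itself notes that ``similar phenomena were \emph{not} observed for other polynomial weight functions $b$ which satisfy the conditions of Theorem~\ref{thm:postsagan}, suggesting that there is something special here.'' Any successful argument must therefore exploit specific arithmetic features of $(2x+1)^{2k}$ that go beyond the general orbit machinery of Section~\ref{sec:thm}, and your plan does not identify what those features are or where they would enter.
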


Similar phenomena were \emph{not} observed for other polynomial weight functions $b$ which satisfy the conditions of Theorem \ref{thm:postsagan}, suggesting that there is something special here.

\begin{conjecture}[\cite{postnikov2000morse}]
\label{conj:Morse5adic}
    If $\alpha$ is the $5$-adic integer $\dotsc 111111120_5$, then for $n\geq 4$ we have
    \[\xi_5(L_n)=\begin{cases}
    2 &\text{ if $n$ is even} \\
    \xi_5(n-\alpha)+3 &\text{ if $n$ is odd}
    \end{cases}.\]
\end{conjecture}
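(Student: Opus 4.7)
The plan is to combine the continued-fraction truncation of Lemma~\ref{lem:cfracexpansion} with a refined version of the ``power-path'' decomposition used in Theorem~\ref{thm:Morse3period}. For $b(x)=(2x+1)^2$, we have $\xi_5(b(y))>0$ exactly when $y\equiv 2\pmod 5$; in particular $\xi_5(b(2))=\xi_5(b(7))=2$ and $\xi_5(b(12))=4$. Any Dyck path reaching height $h$ therefore has weight whose $5$-adic valuation grows at least linearly in $h$, so modulo $5^r$ only paths below some finite height $h(r)$ contribute, and the truncated continued fraction gives a closed rational generating function.

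I would first handle the even case $\xi_5(L_n)=2$ for even $n\geq 4$. Modulo $5^3$, paths of height $\geq 8$ already carry the factor $b(2)b(7)=25\cdot 225$, so only paths of height $\leq 7$ matter; among those, only paths of height $\leq 2$ avoid $b(2)=25$ entirely. Splitting $L_n$ accordingly and applying Lemma~\ref{lem:cfracexpansion} to each piece produces a short linear recurrence for $L_n\pmod{125}$, which can be solved explicitly in the style of Examples~\ref{ex:Lnmod7} and~\ref{ex:Lnmod11} to confirm that $L_n/25$ is a $5$-adic unit precisely when $n$ is even and $n\geq 4$.

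For the odd case I would induct on $k=\xi_5(n-\alpha)$, proving simultaneously that (i) $\xi_5(L_n)=k+3$, and (ii) a level-$k$ approximation $\alpha_k\in\Z/5^{k+1}\Z$ to $\alpha$ is well-defined as the unique odd residue forcing $L_n\equiv 0\pmod{5^{k+3}}$ whenever $n\equiv\alpha_k\pmod{5^{k+1}}$. Working modulo $5^{k+4}$, truncate the continued fraction at height $h(k+4)$ and write $L_n$ as an explicit $\Z/5^{k+4}\Z$-valued function of the residue of $n$ modulo $5^{k+1}$. The inductive step asks that, after dividing by $5^{k+3}$, this function be a nonzero $\mathbb{F}_5$-linear form in $(n-\alpha_{k-1})/5^k$; the zero of that linear form selects the next digit of $\alpha$ and extends $\alpha_{k-1}$ to $\alpha_k$.

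The main obstacle will be the inductive step, which is a Hensel-type statement: the $5$-adic derivative of $L_n/5^{k+3}$ with respect to $n$ must be a $5$-adic unit at $n=\alpha_{k-1}$. This amounts to a non-cancellation identity among the leading-order path contributions at level $k$, and establishing it uniformly in $k$ seems to require additional structural input, such as a closed form for the relevant partial generating function. A secondary, mainly computational difficulty is matching the specific digits $\ldots 111120_5$ of $\alpha$: while $\alpha\equiv 10\pmod{25}$ can be checked directly, the long string of $1$'s in the higher digits strongly suggests that $\alpha$ is a $5$-adic fixed point of an explicit affine map $n\mapsto An+B$ emerging from the continued-fraction recursion, and locating that map would give a much cleaner route to the conjecture than the inductive approach in isolation.
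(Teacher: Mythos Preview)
The paper does not prove this statement: it is listed as Conjecture~\ref{conj:Morse5adic} in Section~\ref{subsec:valuationconjectures} (``Further Morse link number conjectures'') with the remark that it originates from Postnikov, and no argument toward it is given. So there is no ``paper's own proof'' to compare your proposal against.

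As for the proposal itself, it is a strategy sketch rather than a proof, and you say so yourself. The even case is plausible and essentially a finite computation: truncating at height $7$ modulo $5^3$ and solving the resulting linear recurrence in the style of Examples~\ref{ex:Lnmod7}--\ref{ex:Lnmod11} should indeed settle $\xi_5(L_n)=2$ for even $n\ge 4$. The odd case, however, is where the actual content lies, and you have correctly isolated the gap: the Hensel-type step requires that, after dividing out $5^{k+3}$, the residual dependence of $L_n$ on $(n-\alpha_{k-1})/5^k$ be an $\mathbb{F}_5$-linear form with \emph{unit} slope, uniformly in $k$. Nothing in the truncation or power-path decomposition as stated forces this non-degeneracy; establishing it is the conjecture. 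Your remark that the repeating digit pattern of $\alpha$ suggests a fixed point of an affine map is a reasonable heuristic, but locating such a map and proving the contraction property is again the whole problem. In short, the outline is sensible and the bookkeeping is correct, but the proposal does not close the central gap, and neither does the paper.
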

Note that the original conjecture in \cite{postnikov2000morse} contains some typos.

From computational evidence, it appears that $\xi_3(L_n-1)$ also follows a type of pattern but it may not be as simple as the one given in Conjecture \ref{conj:Morse5adic}. We suggest the following:

\begin{conjecture}
\label{conj:Morse3adic}
    There exists a $3$-adic integer $\alpha$ such that for odd $n\geq 3$, the quantity $\xi_3(L_n-1)$ depends only on $\xi_3(n-\alpha)$ and the last digit of $\frac{n-\alpha}{3^{\xi_3(n-\alpha)}}$, which is $1$ or $2$.
\end{conjecture}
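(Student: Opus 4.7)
The plan is to build on the $3$-power path decomposition from the proof of Theorem~\ref{thm:Morse3period}. Writing $L_n - 1 = \sum_{\beta \ne \emptyset} \sum_{P:\alpha(P)=\beta}\wt(P)$, the proof of Theorem~\ref{thm:Morse3period} already shows that modulo any fixed $3^r$ only finitely many $3$-power paths $\beta$ contribute, and each inner sum has generating function $D_\beta x^{c_\beta}/\prod_{v \in V_1}(1 - a_v x)$, where each $a_v$ lies in $\{1,\, b(3j+2)+b(3j+3)\}$ and reduces to $1$ or $2$ modulo $3$. The first non-trivial base is $74 = b(2)+b(3)$, and a direct check ($L_3 = 253$, so $\xi_3(L_3 - 1) = 2$ whereas $\xi_3(g(3)) = 5$) shows that paths reaching height $\geq 3$ already carry dominant low-valuation contributions; therefore the single function $g(n)$ is insufficient and contributions from all $\beta$ must be assembled.

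First I would restrict to odd $n$ and expand each inner sum by partial fractions, writing $L_n - 1$ modulo $3^r$ as a $\mathbb{Z}_3$-linear combination of terms $\binom{n}{j}\,a^n$ over the finite collection of bases $a$ arising from the relevant $\beta$. Since each $a$ is a $3$-adic unit, the $3$-adic logarithm expands $a^n$ as an analytic function of $n \in \mathbb{Z}_3$, and all pieces combine into a single analytic function $F_r$ with $L_n - 1 \equiv F_r(n) \pmod{3^r}$ for odd $n$. Second, I would locate the $3$-adic zero $\alpha$ of the coherent limit $F = \lim_r F_r$: Hensel--Newton iteration applied to the finite-$r$ truncations should produce a Cauchy sequence $\alpha_r \to \alpha \in \mathbb{Z}_3$ and, in parallel, identify which Taylor coefficients of $F$ at $\alpha$ have which $3$-adic valuations. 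Conjecture~\ref{conj:Morse3adic} would then translate into a statement about these Taylor valuations: the dependence of $\xi_3(L_n - 1)$ on both $\xi_3(n-\alpha)$ and the leading base-$3$ digit of $(n-\alpha)/3^{\xi_3(n-\alpha)}$ is exactly what arises when two successive Taylor coefficients of $F$ have valuations that force a digit-dependent cancellation at every depth.

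The principal obstacle is proving that the infinite family of $\beta$-contributions genuinely converges to such an analytic $F$ with a \emph{unique} simple zero on the odd integers. While larger $\beta$ contribute at higher $3$-adic valuation through $D_\beta$ (which accumulates factors of $b(1)=9$, $b(2)=25$, $b(4)=81$, and so on), controlling the interaction of these tails with the dominant $74^n$ term, and ruling out that a deeper $\beta$ perturbs the location of $\alpha$ beyond what is visible at fixed precision, would require a uniform effective estimate relating the depth of $\beta$ to the precision at which it can shift $F$. Absent such a bound, the conjecture reduces to the claim that an implicitly defined series of contributions has exactly one simple zero with the asserted local behavior --- a deep $3$-adic analytic statement, and plausibly the reason the conjecture has remained open.
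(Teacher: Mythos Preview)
The statement you are addressing is Conjecture~\ref{conj:Morse3adic}, which the paper explicitly leaves open; there is no proof in the paper to compare against. So the relevant question is not whether your argument matches the paper's, but whether it constitutes a proof at all.

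It does not, and you say as much. What you have written is a research outline: reorganize $L_n-1$ via the $3$-power path decomposition, pass to partial fractions, interpolate by a $3$-adic analytic function $F$, and then hope that $F$ has a unique simple zero $\alpha$ with Taylor coefficients whose valuations produce the stated digit-dependence. Each step up to ``hope'' is reasonable and does build naturally on the machinery of Theorem~\ref{thm:Morse3period}. But the decisive step---showing that the assembled $F$ has exactly one zero on the odd $3$-adic integers, and that the valuations of its Taylor coefficients at that zero fall into the specific pattern the conjecture requires---is precisely the content of the conjecture, and you have not supplied an argument for it. Your own final paragraph identifies this gap correctly: without a uniform bound controlling how deeper $\beta$ perturb the zero location and the coefficient valuations, the plan reduces the conjecture to an equivalent $3$-adic analytic statement rather than proving it.

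One further caution: even the preliminary step of writing $L_n-1$ as a single $3$-adic analytic function of $n$ is not automatic. The bases $a_v$ lie in $\{1,2\}$ modulo $3$, and $2^n$ is not $3$-adically analytic in $n$ over all of $\Z_3$; one must first restrict to a fixed residue of $n$ modulo $2$ (which you do, taking $n$ odd) and then argue that the resulting function extends analytically. This is standard but needs to be said, and the coherence of the limit $F=\lim_r F_r$ across the infinitely many $\beta$ is exactly the effective estimate you flag as missing.
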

If Conjecture \ref{conj:Morse3adic} were true, then for $n\geq 3$ we would have
\[\xi_3(n) =\begin{cases}
2 & n\text{ even} \\
6 & n\equiv 1\pmod{6} \\
4 & n\equiv 3\pmod{6} \\
5 & n\equiv 5, 11 \pmod{18} \\
\vdots
\end{cases}.\]

\section*{Acknowledgements}
This research was carried out as part of the 2019 Summer Program in Undergraduate Research (SPUR) of the MIT Mathematics Department. We would like to thank Prof.\,Alex Postnikov for suggesting the project and Prof.\,Richard Stanley, Prof.\,Ankur Moitra and Prof.\,David Jerison for helpful conversations.

\bibliographystyle{plain}
\bibliography{ref}

\begin{thebibliography}{10}

\bibitem{an2010combinatorial}
Junkyu An.
\newblock {\em Combinatorial enumeration of weighted Catalan numbers}.
\newblock PhD thesis, Massachusetts Institute of Technology, 2010.

\bibitem{bright2008periodicity}
Curtis Bright.
\newblock Modular periodicity of linear recurrence sequences.
\newblock {\em preprint}, 2008.
\newblock available at
  \url{https://cs.uwaterloo.ca/~cbright/reports/PM434Project.pdf}.

\bibitem{deutsch2006congruences}
Emeric Deutsch and Bruce~E. Sagan.
\newblock Congruences for {C}atalan and {M}otzkin numbers and related
  sequences.
\newblock {\em J. Number Theory}, 117(1):191--215, 2006.

\bibitem{dickson1966history}
Leonard~Eugene Dickson.
\newblock {\em History of the theory of numbers. {V}ol. {I}: {D}ivisibility and
  primality}.
\newblock Chelsea Publishing Co., New York, 1966.

\bibitem{goulden2004combinatorial}
Ian~P. Goulden and David~M. Jackson.
\newblock {\em Combinatorial enumeration}.
\newblock Dover Publications, Inc., Mineola, NY, 2004.
\newblock With a foreword by Gian-Carlo Rota, Reprint of the 1983 original.

\bibitem{konvalinka2007divisibility}
Matja\v{z} Konvalinka.
\newblock Divisibility of generalized {C}atalan numbers.
\newblock {\em J. Combin. Theory Ser. A}, 114(6):1089--1100, 2007.

\bibitem{pinch1995recurrent}
R.~G.~E. Pinch.
\newblock Recurrent sequences modulo prime powers.
\newblock In {\em Cryptography and coding, {III} ({C}irencester, 1991)},
  volume~45 of {\em Inst. Math. Appl. Conf. Ser. New Ser.}, pages 297--310.
  Oxford Univ. Press, New York, 1993.

\bibitem{postnikov2000morse}
Alexander Postnikov.
\newblock Counting morse curves and links.
\newblock {\em preprint}, 2010.
\newblock available at
  \url{https://math.mit.edu/~apost/papers/morse-brief.pdf}.

\bibitem{postnikov2007power}
Alexander Postnikov and Bruce~E. Sagan.
\newblock What power of two divides a weighted {C}atalan number?
\newblock {\em J. Combin. Theory Ser. A}, 114(5):970--977, 2007.

\bibitem{stanley2015catalan}
Richard~P. Stanley.
\newblock {\em Catalan numbers}.
\newblock Cambridge University Press, New York, 2015.

\end{thebibliography}

\end{document}